\theoremstyle{plain}
\theoremstyle{definition}
\newtheorem{thm}{Theorem}
\newtheorem{cor}{Corollary}
\newtheorem{prop}{Proposition}
\newtheorem{lem}{Lemma}
\newtheorem{remark}{Remark}
\begin{document}
\title{The rank of the semigroup of order-, fence-, and parity-preserving partial  injections on a finite set}
\author[1]{Apatsara Sareeto}
\author[2*]{Jörg Koppitz}

\affil[1]{Corresponding author. Institue of Mathematics, University of Potsdam, Potsdam, 14476, Germany  (E-mail: channypooii@gmail.com)}
\affil[2*]{Institute of Mathematics and Informatics, Bulgarian Academy of Sciences, Sofia, 1113, Bulgaria  (E-mail: koppitz@math.bas.bg).}

\maketitle 
\begin{abstract} 
The monoid of all partial injections on a finite set (the symmetric inverse semigroup) is of particular interest because of the well-known Wagner-Preston Theorem. Let $n$ be a positive natural number and $PFI_n$ be
the semigroup of all fence-preserving partial one-one maps of $\{1,...,n\}$ into itself
with respect to composition of maps and the fence $1 \prec 2 \succ 3 \prec\cdot\cdot\cdot \ n$. There is considered the inverse semigroup $IOF_n^{par}$ of all $\alpha\in PFI_n$ such that $\alpha$ is regular in $PFI_n$, order-preserving with respect to the order $1 < 2 < \cdots < n$ and parity-preserving. According to the main result of the paper, it is $3n-6$ the least of the cardinalities of the generating sets of $IOF_n^{par}$ for $4 \leq n$. There is determined a concrete representation of a generating set of minimal size.

\vskip1em \noindent \textbf{2020 Mathematics Subject Classification: 20M05, 20M18, 20M20}

\vskip1em \noindent \textbf{Keywords: Partial transformation semigroup, Order-preserving, Fence-preserving, Parity-preserving, Generating set, Rank}

\end{abstract}

\section{Introduction and Preliminaries}\label{sec1}
It is well-known that every finite semigroup is isomorphic to a subsemigroup of a suitable finite transformation semigroup. It is the analog of Cayley's Theorem for finite groups. Hence, the transformation semigroups and their subsemigroups have an important role in semigroup theory, as the symmetric groups in group theory. In inverse semigroup theory, the Wagner-Preston Theorem states that every inverse semigroup is isomorphic to a subsemigroup of a suitable symmetric inverse semigroup. \\ 
\indent Let $\overline{n}$ be a finite set with $n$ elements ($n$ is a positive integer), say $\overline{n}=\{1,...,n\}$. We denote by $PT_n$ the monoid (under composition) of all partial transformations on $\overline{n}$. A partial injection $\alpha$ on the set $\overline{n}$ is a one-to-one function from a subset $A$ of $\overline{n}$ into $\overline{n}$. The set of all partial denoted by $I_n$. The domain of $\alpha$ is the set $A$, denoted by $dom(\alpha)$. The range of $\alpha$ is denoted by $im(\alpha)$. The empty transformation will be denoted by $\varepsilon$, it is the transformation with  $dom(\varepsilon)=\emptyset$. The set $I_n$ (under composition) forms a monoid, which is called a symmetric inverse semigroup. The symmetric inverse semigroup was introduced by Wagner \cite{wagner}. If the domain has cardinality $m$, which is also the cardinality of the range, then the transformation $\alpha$ is said to be of rank $m$, in symbol: rank$(\alpha)= m$. \\
\indent Let $S$ be a semigroup, and let $A$ be a non-empty subset of $S$. Then the subsemigroup generated by $A$, that is the smallest subsemigroup of $S$ containing $A$, is denoted by $\langle A \rangle$. If a semigroup $S$ has a finite subset $A$ such that $S=\langle A \rangle $, then $S$ is called a finitely generated semigroup. The rank of a finitely generated semigroup $S$ is defined by rank$(S)=min\{\lvert A\rvert:\langle A \rangle=S\}$.  A generating set for $S$ is called a minimal generating set if no proper subset of it generates $S$. \\
\indent  Now, we  consider a linear order $1<2<\cdot\cdot\cdot<n$ on $\overline{n}$. We say that a transformation $\alpha\in PT_n$ is order-preserving if $x< y$ implies $x\alpha\leq y\alpha$, for all $x,y\in dom(\alpha)$. We denote by $PO_n$ the submonoid of $PT_n$ of all order-preserving partial transformations and by $POI_n$ the monoid $PO_{n}\cap I_n$ of all order-preserving partial injections on $\overline{n}$. \\
\indent Ganyuskin and Mazorchuk \cite{Gany} described the maximal subsemigroups of the semigroup $POI_n$. In \cite{Ilinka}, Dimitrova and Koppitz characterized the maximal subsemigroups of the ideals of the semigroup $POI_n$. Fernandes calculated the size of $POI_n$ in \cite{fer}, it has the size $ {2n \choose n} $. Moreover, Fernandes has found that $POI_n$ is generated by $J_{n-1}$, whenever $J_k$ is the $J$-class of $POI_n$ consisting of the maps in $POI_n$ of rank $k$, for all $0\leq k\leq n$. Notice that $J_0=\{\varepsilon\}$ and $J_n=\{id_{\overline{n}}\}$, where $id_{\overline{n}}$ is the identity mapping on $\overline{n}$. Recently, Annis and Lopez \cite{Ann} have shown that  $POI_n$ has $(n-1)!$ minimal generating sets. \\
\indent The rank of the monoid $PO_n$ was established by Gomes and Howie \cite{Gomes}  and Garba \cite{Garba} studied the idempotent ranks of certain semigroups of order-preserving transformations in 1994. Later in 2001, Fernandes calculated that the rank of the monoid $POI_n$ is $n$. \\
\indent A non-linear order that is close to a linear order in some sense is the so-called zig-zag order. The pair $(\overline{n}, \preceq)$ is called a zig-zag poset or fence if 
\begin{center}
	$1 \prec 2 \succ \cdot\cdot\cdot \prec n - 1 \succ n$ or $1 \succ 2 \prec \cdot\cdot\cdot \succ n - 1 \prec n$ if n is odd \\
	and  $1 \prec 2 \succ \cdot\cdot\cdot \succ n - 1 \prec n$ or $1 \succ 2 \prec \cdot\cdot\cdot \prec n - 1 \succ n$ if n is even.
\end{center}
The definition of the partial order $\preceq$ is self-explanatory. Transformations on fences were first considered by Currie and Visentin \cite{currie} as well as Rutkowski \cite{Rutkowski}. We observe that every element in a fence is either minimal or maximal. Without loss of generality, let $1 \prec 2 \succ 3 \prec\cdot\cdot\cdot \succ n$ and $1 \prec 2 \succ 3 \prec\cdot\cdot\cdot \succ n-1\prec n$, respectively. Such fences are also called up-fences. The fence $1 \succ 2 \prec 3 \succ \cdot\cdot\cdot \prec n$ and $1 \succ 2 \prec 3 \succ \cdot\cdot\cdot \prec n-1\succ n$, respectively, would be called down-fence. We avoid both notations up-fence and down-fence. To check whether a fence is an up-fence or down-fence, we need that $1$ and $2$ are comparable for $\preceq$. Recall that $x, y \in \overline{n}$ are comparable with respect to $\preceq$ if $x \prec y$ or $x = y$ or $x \succ y$. Otherwise, $x$ and $y$ are called incomparable. But the restriction that $1$ and $2$ belong to the fence and are comparable is an unnecessary restriction for the concept fence since instead of $\overline{n}$ one could choose another $n$-element set or one could define $\preceq$ on $\overline{n}$ such that $1$ and $2$ are incomparable. But if the fence $(\overline{n}, \preceq)$ is defined as above (which is the most natural way) then we observe that any $x, y \in \overline{n}$ are comparable if and only if $x \in \{y - 1, y, y + 1\}$. \\ \indent We say that a transformation $\alpha \in I_n$ is fence-preserving if $x \prec y$ implies that $x\alpha \prec y\alpha$, for all $x, y \in dom (\alpha)$. We denote by $PFI_n$ the submonoid of $I_n$ of all  fence-preserving partial injections of $\overline{n}$. Fernandes et al. characterized the full transformations on $\overline{n}$ preserving the zig-zag order \cite{fer 2}. It is worth mentioning that several other properties of monoids of fence-preserving full transformations were also studied. In \cite{Jit, Tanya}, Srithus et al., the regular elements of these monoids were discussed. Some relative ranks of the monoid of all partial transformations preserving an infinite zig-zag order were determined in \cite{Dimi}. We denote by $IF_n$ the inverse subsemigroup of all regular elements in $PFI_n$. It is easy to see that $IF_n$ is the set of all $\alpha\in PFI_n$ with $\alpha^{-1}\in PFI_n$. For the case that $n$ is even, it is proved that rank$(IF_n) = n+1$ and a concrete generating set of $IF_n$ with $n+1$ elements is given in \cite{Ilinka 2}. Later in 2021, for the case that $n$ is odd, Koppitz and Musunthia \cite{Kopp} calculated that the rank of $IF_n$ is $5$ or $\frac{n-5}{2}+\lfloor{\frac{n+6}{4}}\rfloor \lfloor{\frac{n+7}{4}}\rfloor$ whenever $n=3$ and $n\geq 5$ is odd, respectively. Fence-preserving transformations are also studied in \cite{fer 2, Jen, Loh, Sir}. For general background on semigroups and standard notations, we refer the reader to \cite{clif, Howie}. \\ \indent The previous facts have given us the main inspiration for the study of a submonoid of $POI_n \bigcap IF_n$, namely the monoid $IOF_n^{par}$ of all $\alpha\in POI_n \bigcap IF_n$. In the present paper, we restrict us to the case that $x$ and $x\alpha$ have the same parity for all $x\in dom (\alpha)$. It is easy to verify that $IOF_n^{par}$ forms a monoid, the inverse partial injection exists for any  $\alpha\in IOF_n^{par}$  and is order-, fence-, and parity-preserving. This implies that $IOF_n^{par}$ is an inverse submonoid of $I_n$. We focus our attention on generating sets of $IOF_n^{par}$. For $n\in\{1,2,3\}$, we observe that $IOF_1^{par}= \{ \bigl(\begin{smallmatrix}
1\\1 \end{smallmatrix}\bigr), \varepsilon\}$, $ IOF_2^{par}=\{\bigl(\begin{smallmatrix}
1 &2\\
1 &2 \end{smallmatrix}\bigr), \bigl(\begin{smallmatrix}
1\\1 \end{smallmatrix}\bigr), \bigl(\begin{smallmatrix}
2\\2 \end{smallmatrix}\bigr), \varepsilon\}$ and $IOF_3^{par}=\{\bigl(\begin{smallmatrix}
1 &2&3\\
1 &2&3 \end{smallmatrix}\bigr),\bigl(\begin{smallmatrix}
1 \\1 \end{smallmatrix}\bigr), \bigl(\begin{smallmatrix}
2\\2 \end{smallmatrix}\bigr), \bigl(\begin{smallmatrix}
3\\3 \end{smallmatrix}\bigr),\bigl(\begin{smallmatrix}
1 &2\\
1 &2 \end{smallmatrix}\bigr), \bigl(\begin{smallmatrix}
1 &3\\
1 &3 \end{smallmatrix}\bigr),\bigl(\begin{smallmatrix}
2 & 3\\
2 & 3 \end{smallmatrix}\bigr), \bigl(\begin{smallmatrix}
1\\3 \end{smallmatrix}\bigr), \bigl(\begin{smallmatrix}
3\\1 \end{smallmatrix}\bigr),  \varepsilon\}$. It is routine to calculate the ranks of these three monoids. We obtain rank$(IOF_1^{par}) = 1$,  rank$(IOF_2^{par}) = 2$, and  rank$(IOF_3^{par}) = 5$. Let $n \geq 4$ for the  rest of this paper.
We can characterize the transformations in $IOF_n^{par}$ as follows:
\begin{prop} \label{4 choice}
	Let  $\alpha = \bigl(\begin{smallmatrix}
	d_1 &<&d_2&<&   \cdots & < &d_p \\
	m_1 & &m_2 & & \cdots &    & m_p
	\end{smallmatrix}\bigr) \in I_n$. Then $\alpha\in IOF_n^{par}$ if and only if  the following four conditions hold.\\
	(i) $m_1<m_2<\cdot\cdot\cdot<m_p$. \\
	(ii) $d_1$ and $m_1$ have the same parity. \\
	(iii) $d_{i+1}-d_i=1$ if and only if $m_{i+1}-m_i=1$ for all $i\in\{1,...,p-1\}$.  \\
	(iv) $d_{i+1}-d_i$ is even if and only if $m_{i+1}-m_i$ is even for all $i\in\{1,...,p-1\}$.
\end{prop}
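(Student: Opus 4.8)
The plan is to match each defining property of $IOF_n^{par}$ against the list (i)--(iv): $\alpha$ being order-preserving will give (i), $\alpha$ being parity-preserving will give (ii) and (iv), and $\alpha$ being regular in $PFI_n$ (that is, both $\alpha\in PFI_n$ and $\alpha^{-1}\in PFI_n$) will give (iii); then one simply conjuncts the three equivalences. First, since $\alpha$ is injective, order-preservation with respect to $1<2<\cdots<n$ is literally the statement $m_1<m_2<\cdots<m_p$, i.e.\ (i). From now on I would assume (i), so that $\alpha^{-1}=\bigl(\begin{smallmatrix} m_1 &<& \cdots &<& m_p\\ d_1 & & \cdots & & d_p\end{smallmatrix}\bigr)$ is displayed in the same normal form as $\alpha$, with the roles of the $d_i$ and the $m_i$ interchanged.

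Next I would dispose of parity. Parity-preservation of $\alpha$ means $d_i\equiv m_i\pmod 2$ for all $i$; taking $i=1$ gives (ii), and subtracting the congruences for $i$ and $i+1$ gives $d_{i+1}-d_i\equiv m_{i+1}-m_i\pmod 2$, which is (iv). Conversely, (ii) and (iv) give $d_i\equiv m_i\pmod 2$ for all $i$ by an immediate induction on $i$ (base case (ii); step $d_{i+1}=d_i+(d_{i+1}-d_i)\equiv m_i+(m_{i+1}-m_i)=m_{i+1}$ using (iv) and the inductive hypothesis). Hence, independently of (i) and (iii), parity-preservation of $\alpha$ is equivalent to (ii) and (iv) together.

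The remaining point, and the only one needing care, is that under (i), (ii), (iv) the pair of conditions ``$\alpha\in PFI_n$ and $\alpha^{-1}\in PFI_n$'' is equivalent to (iii). I would use the two observations recorded in Section~\ref{sec1}: distinct $x,y\in\overline n$ are $\preceq$-comparable iff $|x-y|=1$, and in the fence $1\prec 2\succ 3\prec\cdots$ the odd elements are exactly the minimal ones and the even elements exactly the maximal ones (so that among two consecutive integers, the odd one is $\prec$ the even one). Because $d_1<\cdots<d_p$, the only comparable pairs inside $dom(\alpha)$ are the $\{d_i,d_{i+1}\}$ with $d_{i+1}-d_i=1$; for such a pair, parity-preservation already forces $m_i,m_{i+1}$ to have the same parities as $d_i,d_{i+1}$, so the minimal/maximal orientation of the image pair is correct automatically, and the extra content of $\alpha\in PFI_n$ is exactly that $m_i$ and $m_{i+1}$ be comparable too, i.e.\ (using (i), so that $m_i<m_{i+1}$) that $m_{i+1}-m_i=1$. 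Thus $\alpha\in PFI_n$ amounts to the implication $d_{i+1}-d_i=1\Rightarrow m_{i+1}-m_i=1$ for all $i$; applying the very same reasoning to $\alpha^{-1}$ (also order- and parity-preserving under the standing hypotheses, with $d$'s and $m$'s swapped) shows $\alpha^{-1}\in PFI_n$ amounts to the reverse implication $m_{i+1}-m_i=1\Rightarrow d_{i+1}-d_i=1$. The conjunction of these two is precisely (iii). Conjuncting the three equivalences established for order-, parity-, and fence-regularity yields the proposition, after a quick separate check of the degenerate ranks $p\le 1$, where (i), (iii) and (iv) are vacuous. The main obstacle is purely one of bookkeeping---keeping the orientation in the zig-zag order straight---rather than anything substantive, since once parity-preservation is in force the zig-zag condition collapses to the distance-one condition (iii).
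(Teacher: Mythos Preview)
Your proof is correct and follows essentially the same strategy as the paper's: (i) is order-preservation, (ii) together with (iv) is parity-preservation via the same induction, and (iii) is equivalent, under (i), (ii), (iv), to both $\alpha$ and $\alpha^{-1}$ being fence-preserving. Your decomposition of the biconditional (iii) into the forward implication ($\alpha\in PFI_n$) and the backward implication ($\alpha^{-1}\in PFI_n$) is a bit more explicit than the paper's presentation, but the underlying argument is the same.
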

\begin{proof}   $(\Rightarrow)$: (i) and (ii) hold since $\alpha$ is order- and parity-preserving, respectively. 
	(iii): Since $\alpha\in IF_n$, we have $d_{i+1}\alpha-d_i\alpha=1$, i.e. $m_{i+1}-m_i=1$, if and only if $d_{i+1}-d_i=1$, for all $i\in\{1,...,p-1\}$. 	(iv): Suppose  $d_{i+1}-d_i$ is even. Then $d_{i+1}$ and $d_i$ have the same parity. Moreover, $\alpha$ is parity-preserving. This implies $d_{i+1}\alpha$ and $d_i\alpha$ have the  same parity, i.e. $m_{i+1}-m_i$ is even. The converse direction can be proved dually. \\	
	
	\indent \quad $(\Leftarrow)$: By (i), we get $\alpha$ is order-preserving. Let $i\in\{1,...,p-1\}$ and suppose $d_i$ and $m_i$ have the same parity. Then $d_i-m_i=2k$ for some integer $k$. By (iv), we have $(d_{i+1}-d_i)-(m_{i+1}-m_i)=2l$ for some integer $l$. We obtain $2l=d_{i+1}-m_{i+1}-(d_i-m_i)=d_{i+1}-m_{i+1}-2k$, i.e. $d_{i+1}-m_{i+1}=2(l+k)$. This implies $d_{i+1}$ and $m_{i+1}$ have the same parity. Together with (ii), we can conclude that $\alpha$ is parity-preserving. Let  $x\prec y$. This provides $\lvert x- y \rvert=1$. We have $ \lvert x\alpha - y\alpha \rvert = 1 $ by (iii). Since $\alpha$ is parity-preserving, $ \lvert x\alpha - y\alpha \rvert = 1 $ and $x\prec y$ give $x\alpha \prec y\alpha$. So, $\alpha\in PFI_n$. Similarly, we can show that $\alpha^{-1}\in PFI_n$, i.e. $\alpha\in IF_n$. Therefore, $\alpha\in IOF_n^{par}$.
\end{proof}
\indent	Let $\overline{v}_i$ be the partial identity with the domain $\overline{n}\backslash \{i\}$ for all $i\in\{1,...,n\}$ and  let $Id_{\overline{n}}$ be the set of all partial identities. Further, let 	

\begin{center}   
	$\overline{u}_i = \begin{pmatrix}
	1 &  \cdots  & i & i+1 & i+2 & i+3 & i+4 &  \cdots   & n \\
	3 &   \cdots   & i+2 & - & -& - & i+4 &  \cdots     & n 
	\end{pmatrix}$ \end{center}  and  $\overline{x}_i=(\overline{u}_i)^{-1}$ for all $i\in\{1,...,n-2\}$. By Proposition \ref{4 choice}, it is easy to verify that $\overline{u}_i$ as well as $\overline{x}_i$, $i\in\{1,...,n-2\}$, belong to $IOF_n^{par}$. We will show that
\begin{center}
	$A_n=\{\overline{v}_1,...,\overline{v}_n , \overline{u}_1,...,\overline{u}_{n-4},\overline{u}_{n-2},\overline{x}_1,...,\overline{x}_{n-4} ,\overline{x}_{n-2} \}$ \end{center} is a generating set of minimal size for the monoid $IOF_{n}^{par}$. Clearly, $\langle A_n \rangle\subseteq IOF_{n}^{par}$. It is easy to see that all partial identities including the empty transformation are generated by $\{\overline{v}_1,...,\overline{v}_n\}\subseteq A_n$.
In the next section, we will present any $\alpha\in IOF_n^{par}\backslash (Id_{\overline{n}}\cup \{\varepsilon\})$ as a normal form in $\langle A_n \rangle$.
\section{Normal forms of transformations in $\langle A_n \rangle$}  
In this section, we will find a generating set for $IOF_n^{par}$. We will fix now an $\alpha \in IOF_n^{par}\backslash (Id_{\overline{n}}\cup \{\varepsilon\})$, say  $\alpha = \bigl(\begin{smallmatrix}
d_1 & < & \cdots & < & d_p \\
m_1 &  &  \cdots &    & m_p
\end{smallmatrix}\bigr)$, where $p=\lvert\mbox{rank}(\alpha) \rvert$. 
In order to show that $\alpha\in\langle A_n \rangle$,  we consider a word $w_\alpha$ over the alphabet $X_n=\{v_1,...,v_n,u_1,...,u_{n-2},\\ x_1,...,x_{n-2}\}$  and show that $\overline{w}_\alpha=\alpha$, where $\overline{w}_\alpha$ is the transformation that we obtain from the word $w_\alpha$ by replacing any letter $a$ in $w_\alpha$ by the transformation $\overline{a}$. For a word $z=a_1...a_k$ over $X_n$,  let $z^{-1}=a_ka_{k-1}...a_1$.  Let $x_{i,j}=x_ix_{i+2}...x_{i+2j-2}$ and  $u_{i,j}=u_iu_{i+2}...u_{i+2j-2}$ for $i\in\{1,...,n-2\}, j\in\{1,...,\lfloor{\frac{n-i}{2}}\rfloor\}$. Further, let $W_x=\{x_{i,j} : i\in\{1,...,n-2\}, j\in \{1,...,\lfloor{\frac{n-i}{2}}\rfloor\}$
and $W_u=\{u_{i,j} : i\in\{1,...,n-2\},j\in \{1,...,\lfloor{\frac{n-i}{2}}\rfloor\}$.  For a non-empty set $A=\{a_1<\cdots<a_r\}\subseteq \overline{n}$, for some $r\in\overline{n}$, we put 
$v_A=v_{a_1}...v_{a_r}$. Additional, $v_\emptyset$ is the empty word $\epsilon$.  First, we construct the word $w_\alpha$.  
\\   
\indent There are a unique $l\in\{0,1,...,p-1\}$ and a unique set $\{r_1,...,r_l\}\subseteq \{1,...,p-1\}$ such that (i)-(iii) are satisfied:\\
(i) $r_1<\cdot\cdot\cdot<r_l$;\\
(ii) $d_{r_i+1}-d_{r_i}\neq m_{r_i+1}-m_{r_i}$ for $i\in\{1,...,l\}$; \\
(iii) $d_{i+1}-d_{i}= m_{i+1}-m_{i}$ for $i\in\{1,...,p-1\}\backslash\{r_1,...,r_l\}$.  \\ 
Note that $l=0$ means $\{r_1,...,r_l\}=\emptyset$. Further, we put $r_{l+1} = p$.
For $i\in\{1,...,l\}$, we define
\begin{align*}
w_i 
=& \begin{cases}
x_{m_{r_i},\frac{(m_{r_i+1}-m_{r_i})-(d_{r_i+1}-d_{r_i})}{2}} \ \mbox{if} \ m_{r_i+1}-m_{r_i}>d_{r_i+1}-d_{r_i}; \\
u_{d_{r_i},\frac{(d_{r_i+1}-d_{r_i})-(m_{r_i+1}-m_{r_i})}{2}} \ \mbox{if} \ m_{r_i+1}-m_{r_i}<d_{r_i+1}-d_{r_i}. 
\end{cases}
\end{align*} Obviously, we have $w_i\in W_x\cup W_u$ for all $i\in\{1,...,l\}$. If $m_p=d_p$ then we put $w_{l+1}=\epsilon$. If $m_p \neq d_p$, we define additionally
\begin{align*}
w_{l+1} 
=& \begin{cases}
x_{m_{p},\frac{d_p-m_p}{2}} \ \mbox{if} \ d_p>m_p ; \\
u_{d_{p},\frac{m_p-d_p}{2}} \ \mbox{if} \ d_p<m_p. 
\end{cases}&&\qedhere
\end{align*} 
Clearly, $w_{l+1}\in W_x\cup W_u$. We will use the notation $w_{k}=u_{i_{k},j_{k}}$ if $w_k\in W_u$ and $w_k=x_{i_k,j_k}$ if $w_k\in W_x$ for all $k\in\{1,...,l+1\}$. For $k\in\{1,...,l+1\}$, we define integers $k_u$ and $k_x$, recursively.    \\    
If $m_p=d_p$ then we put $l+1)_u=(l+1)_x=d_p$. \\
If $w_{l+1}\in W_u$ then we put $(l+1)_u=i_{l+1}$ and $(l+1)_x=i_{l+1}+2j_{l+1}$.  \\
If $w_{l+1}\in W_x$ then we put $(l+1)_u=i_{l+1}+2j_{l+1}$ and $(l+1)_x=i_{l+1}$. \\
Let $k\in\{1,...,l\}$. \\
If $w_k\in W_u$ then we put  $k_u=i_k$ and $k_x= (k+1)_x-a_k-2$ with $a_k=(k+1)_u-i_k-2j_k-2$. \\
If $w_k\in W_x$ then we put $k_u= (k+1)_u-b_k-2 $ and $k_x= i_k $ with $b_k=(k+1)_x-i_k-2j_k-2$.\\

We observe that $1_u,...,(l+1)_u$ and $1_x,...,(l+1)_x$ correspond to the domain of $\alpha$ and the image of $\alpha$, respectively.
\begin{lem} \label{lem zu=dz}  For  all $k\in\{1,...,l+1\}$, we have $k_u=d_{r_k}, k_x=m_{r_k}$.
\end{lem}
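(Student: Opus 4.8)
The plan is to prove the two equalities $k_u = d_{r_k}$ and $k_x = m_{r_k}$ simultaneously by \emph{downward} induction on $k$, running from $k = l+1$ down to $k = 1$, since that is the direction in which the integers $k_u,\,k_x$ are defined.

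For the base case $k = l+1$ we have $r_{l+1} = p$, so the assertion reads $(l+1)_u = d_p$ and $(l+1)_x = m_p$. I would check the three cases of the definition directly: if $m_p = d_p$, then $(l+1)_u = (l+1)_x = d_p = m_p$ by fiat; if $d_p > m_p$, then $w_{l+1} = x_{m_p,(d_p-m_p)/2}$, so $i_{l+1} = m_p$ and $2j_{l+1} = d_p - m_p$, whence $(l+1)_x = i_{l+1} = m_p$ and $(l+1)_u = i_{l+1} + 2j_{l+1} = d_p$; the subcase $d_p < m_p$ is symmetric with the roles of $u$ and $x$ interchanged. This is immediate from the definitions.

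For the inductive step, fix $k \in \{1,\dots,l\}$ and suppose $(k+1)_u = d_{r_{k+1}}$ and $(k+1)_x = m_{r_{k+1}}$. I would distinguish the two cases $w_k \in W_u$ and $w_k \in W_x$. If $w_k \in W_u$, then by construction $i_k = d_{r_k}$ and $2j_k = (d_{r_k+1}-d_{r_k}) - (m_{r_k+1}-m_{r_k})$, so $k_u = i_k = d_{r_k}$ follows at once; substituting $(k+1)_u$, $i_k$, $j_k$ into $a_k = (k+1)_u - i_k - 2j_k - 2$ and then into $k_x = (k+1)_x - a_k - 2$, a short simplification yields $k_x = m_{r_k} + \big[(m_{r_{k+1}} - d_{r_{k+1}}) - (m_{r_k+1} - d_{r_k+1})\big]$. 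The case $w_k \in W_x$ is dual: there $i_k = m_{r_k}$, so $k_x = m_{r_k}$ immediately, and the analogous computation gives $k_u = d_{r_k} + \big[(d_{r_{k+1}} - m_{r_{k+1}}) - (d_{r_k+1} - m_{r_k+1})\big]$.

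What remains in both cases is to see that the bracketed correction term vanishes, i.e.\ that $d_{r_{k+1}} - m_{r_{k+1}} = d_{r_k+1} - m_{r_k+1}$. This is exactly where condition (iii) enters: the indices $r_k+1, r_k+2, \dots, r_{k+1}-1$ all lie strictly between the consecutive members $r_k$ and $r_{k+1}$ of $\{r_1,\dots,r_l,p\}$ (when $k = l$ we have $r_{k+1} = p$ and these indices lie beyond $r_l$), hence none of them is one of $r_1,\dots,r_l$; so $d_{i+1}-d_i = m_{i+1}-m_i$ for each such $i$, and telescoping from $i = r_k+1$ to $i = r_{k+1}-1$ gives $d_{r_{k+1}} - d_{r_k+1} = m_{r_{k+1}} - m_{r_k+1}$, which is the required identity. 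The only point needing care is the index bookkeeping in this last telescoping step and keeping the chain of substitutions straight; I do not expect any conceptual obstacle beyond that.
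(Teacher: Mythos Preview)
Your proposal is correct and follows essentially the same approach as the paper: downward induction from $k=l+1$, splitting into the cases $w_k\in W_u$ and $w_k\in W_x$, and reducing the computation of $k_x$ (respectively $k_u$) to the identity $d_{r_{k+1}}-d_{r_k+1}=m_{r_{k+1}}-m_{r_k+1}$. The paper simply asserts this identity in the equivalent form $(d_{r_k+1}-d_{r_k})-(m_{r_k+1}-m_{r_k})=(d_{r_{k+1}}-d_{r_k})-(m_{r_{k+1}}-m_{r_k})$ without comment, whereas you spell out the telescoping argument from condition~(iii); this is the only difference, and it is expository rather than substantive.
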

\begin{proof} We prove by induction on $k$. First, we prove that  $(l+1)_x=m_{r_{l+1}}$ and $(l+1)_u=d_{r_{l+1}}$. Suppose $m_p\neq d_p$. If $w_{l+1}\in W_x$ then $(l+1)_x=i_{l+1}=m_{r_{l+1}}$ and $(l+1)_u=i_{l+1}+2j_{l+1}=i_{l+1}+2(\frac{d_{r_{l+1}}-m_{r_{l+1}}}{2})=m_{r_{l+1}}+2(\frac{d_{r_{l+1}}-m_{r_{l+1}}}{2})=d_{r_{l+1}}$. 	For $w_{l+1}\in W_u$, the proof is similar. If $m_p=d_p$ then $(l+1)_u=d_p=d_{r_{l+1}}$ and $(l+1)_x=d_p=m_p=m_{r_{l+1}}$. \\
	\indent 	
	Suppose that $(k+1)_u=d_{r_{k+1}}$ and $(k+1)_x=m_{r_{k+1}}$ for some $k\in\{1,...,l\}$. 
	If $w_k=u_{i_k,j_k}\in W_u$ then $k_u=i_k=d_{r_k}$. On the other hand, we have $(d_{r_k+1}-d_{r_k})-(m_{r_k+1}-m_{r_k})= (d_{r_{k+1}}-d_{r_k})-(m_{r_{k+1}}-m_{r_k})$. Then
	$k_x = (k+1)_x-a_k-2 = (k+1)_x-(k+1)_u+i_k+2j_k = m_{r_{k+1}}-d_{r_{k+1}}+d_{r_k}+2(\frac{(d_{r_{k+1}}-d_{r_k})-(m_{r_{k+1}}-m_{r_k})}{2}) = m_{r_k}$.
	For $w_k\in W_x$, the proof is similar.
\end{proof}	
\noindent We consider the word 
\begin{center}
	$w=w_1...w_{l+1}$.
\end{center} From this word, we construct a new word $w_\alpha^*$ by arranging the subwords $x$ belonging $W_x$ in reverse order at the end of the word $w$, replacing $x$ by $x^{-1}$. In other words, we consider the word \begin{center}
	$w_\alpha^*= w_{s_1}...w_{s_a}w_{s_{a+1}}^{-1}...w_{s_{a+b}}^{-1}$
\end{center} such that $w_{s_1},...,w_{s_a} \in W_u$, $w_{s_{a+1}},...,w_{s_{a+b}}\in W_x$ and $\{w_{s_1},...,w_{s_a},w_{s_{a+1}},...,w_{s_{a+b}}\}=\{w_1,...,w_{a+b}\}$, where $s_1<\cdot\cdot\cdot<s_a, s_{a+b}<\cdot\cdot\cdot<s_{a+1}$ and $a,b \in \overline{n}\cup\{0\}$ with
\begin{center}
	$a+b=
	\begin{cases}
	l  & \mbox{if} \ d_p=m_p;   \\
	l+1 & \mbox{if} \ d_p\neq m_p.
	\end{cases}$
\end{center} For convenient, $a=0$ means $w_\alpha^*=w_{s_{a+1}}^{-1}...w_{s_{a+b}}^{-1}  $ and $b=0$ means $w_\alpha^*=w_{s_1}...w_{s_a}$.  Now we add recursively letters from the set $\{v_1,...,v_n\}\subseteq X_n$ to the word $w_\alpha^*$, obtaining new words $\lambda_0,\lambda_1,...,\lambda_p$. \\

\noindent (1)	For $d_p \leq n-2$: \\
\indent	(1.1) if $m_p<d_p$ then $\lambda_0= v_{d_p+2}...v_nw^*_\alpha$; \\
\indent	(1.2) if $n-1>m_p>d_p$ then $\lambda_0= v_{m_p+2}...v_nw^*_\alpha$; \\ 
\indent	(1.3) if $m_p=d_p$ then $\lambda_0= v_{m_p+1}...v_nw^*_\alpha$; \\
\indent otherwise $\lambda_0=w^*_\alpha$. \\
(2)	 If  $d_p=m_p=n-1$ then $\lambda_0=v_nw^*_\alpha$.  Otherwise $\lambda_0=w^*_\alpha$.  \\
(3)	 For $k\in\{2,...,p\}$: \\
\indent(3.1) if $2\leq m_k-m_{k-1}=d_k-d_{k-1}$ then $\lambda_{p-k+1}= v_{d_{k-1}+1}...v_{d_k-1}\lambda_{p-k}$; \\
\indent(3.2) if $2< m_k-m_{k-1}<d_k-d_{k-1}$ then \\ \indent $\lambda_{p-k+1} = v_{d_k-(m_k-m_{k-1}-2)}...v_{d_k-1}\lambda_{p-k}$; \\
\indent	(3.3)	if $m_k-m_{k-1}>d_k-d_{k-1}>2$ then $\lambda_{p-k+1}=v_{d_{k-1}+2}...v_{d_k-1}\lambda_p$; \\
\indent otherwise $\lambda_{p-k+1}=\lambda_{p-k}$. \\
(4)	If $d_1=1$ or $m_1=1$ then $\lambda_p=\lambda_{p-1}$. \\
(5)	 If $1<d_1\leq m_1$ then $\lambda_p=v_1...v_{d_1-1}\lambda_{p-1}$. \\
(6)	 If $1<m_1<d_1$ then $\lambda_p=v_{d_1-m_1+1}...v_{d_1-1}\lambda_{p-1}$. \\

\indent The word $\lambda_p$ induces a set  $A=\{a\in\overline{n}: v_a\in var(\lambda_p)\}$ and it is easy to verify that  $\rho\notin A$ for all $\rho\in dom(\alpha)$. We put $w_\alpha=\lambda_p$. The word $w_\alpha$ has the form $w_\alpha=v_Aw_\alpha^*$. Clearly, the word $w_\alpha$ defines a transformation $\overline{w}_\alpha$. Moreover, we will point out that $\{\overline{w_\beta^*} : \beta\in IOF_n^{par}\}$ provides a set of normal forms for the products in $\langle A_n \rangle$. To verify  this, it is enough to show that  $\alpha=\overline{w}_\alpha$ since $\alpha$ is fixed but arbitrary. In order to prove the equality of these both transformations, we verify that $\rho\alpha=\rho\overline{w}_\alpha$ for all $\rho\in dom(\alpha)$  and  $\rho\notin dom(\overline{w}_\alpha)$, whenever $\rho\notin dom(\alpha)$. The following lemma will show that the words $w_{s_1},...,w_{s_a}$ as well as the words $w_{s_{a+1}},...,w_{s_{a+b}}$ have pairwise no common variables.
\begin{lem} \label{1-4} Let $k<k'\leq a+b$. \\
	\indent  (i) If $w_k\in W_x$ and $w_{k'}\in W_x$ then $i_k+2j_k+1<i_{k'}$. \\
	\indent (ii) If $w_k\in W_u$ and $w_{k'}\in W_u$ then $i_k+2j_k+1<i_{k'}$. \\
	\indent (iii) If $w_k\in W_u$ then $i_k+2j_k+2\leq(k+1)_u$. \\
	\indent (iv) If $w_k\in W_x$ then $i_k+2j_k+2\leq(k+1)_x$.
\end{lem}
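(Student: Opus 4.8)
The plan is to establish (iii) and (iv) first, directly from the explicit shape of the words $w_k$, and then to obtain (i) and (ii) as short consequences using Lemma \ref{lem zu=dz} together with the strict monotonicity of $(d_i)$ and $(m_i)$.

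For (iii) I would argue as follows. Since $k<k'\le a+b\le l+1$, we have $k\le l$, so $w_k$ is one of the ``regular'' words; as $w_k\in W_u$, the definition gives $w_k=u_{d_{r_k},j_k}$ with $j_k=\frac{(d_{r_k+1}-d_{r_k})-(m_{r_k+1}-m_{r_k})}{2}$ in the subcase $m_{r_k+1}-m_{r_k}<d_{r_k+1}-d_{r_k}$, and in particular $i_k=d_{r_k}$. The crucial point is that $m_{r_k+1}-m_{r_k}\ge 2$: by the choice of the index set $\{r_1,\dots,r_l\}$ we have $d_{r_k+1}-d_{r_k}\ne m_{r_k+1}-m_{r_k}$, so Proposition \ref{4 choice}(iii) forbids $m_{r_k+1}-m_{r_k}=1$ (otherwise $d_{r_k+1}-d_{r_k}=1$ as well and the two differences would coincide), and since $m_{r_k+1}>m_{r_k}$ this forces $m_{r_k+1}-m_{r_k}\ge 2$. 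Moreover $d_{r_{k+1}}\ge d_{r_k+1}$ because $(d_i)$ is strictly increasing and $r_{k+1}\ge r_k+1$. Substituting the value of $j_k$ and invoking Lemma \ref{lem zu=dz}, which yields $(k+1)_u=d_{r_{k+1}}$, a one-line computation gives $(k+1)_u-(i_k+2j_k+2)=(d_{r_{k+1}}-d_{r_k+1})+(m_{r_k+1}-m_{r_k})-2\ge 0$, i.e.\ (iii). Part (iv) is obtained by the symmetric argument, exchanging $W_u\leftrightarrow W_x$ and $(d_i)\leftrightarrow (m_i)$; there the analogous inequality $d_{r_k+1}-d_{r_k}\ge 2$ holds because one is now in the subcase $m_{r_k+1}-m_{r_k}>d_{r_k+1}-d_{r_k}$.

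For (i) and (ii) I would use that, by the recursive definitions of $k_u$ and $k_x$ (including the separate clause for $k=l+1$), one has $k_x=i_k$ whenever $w_k\in W_x$ and $k_u=i_k$ whenever $w_k\in W_u$; hence Lemma \ref{lem zu=dz} gives $i_k=m_{r_k}$ in case (i) and $i_k=d_{r_k}$ in case (ii), and likewise for $i_{k'}$. Then for (i): from (iv) we get $i_k+2j_k+2\le (k+1)_x=m_{r_{k+1}}$, and since $k+1\le k'$ and the $r_i$ are strictly increasing we have $m_{r_{k+1}}\le m_{r_{k'}}=i_{k'}$; chaining these two inequalities yields $i_k+2j_k+1<i_{k'}$. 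Case (ii) is identical with $m$ replaced by $d$ and (iv) replaced by (iii).

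The only slightly delicate points I anticipate are bookkeeping ones: keeping the two symmetric cases ($w_k\in W_u$ versus $w_k\in W_x$) and the boundary index $k=l+1$ apart when quoting the recursive formulas, and making the ``$\ge 2$'' step watertight --- this is precisely where the defining property of $\{r_1,\dots,r_l\}$ and Proposition \ref{4 choice}(iii) enter, and it is what turns the non-strict inequalities of (iii)--(iv) into the strict ones demanded in (i)--(ii). Once these are in place the remaining arithmetic is routine.
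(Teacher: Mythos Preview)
Your proposal is correct. The underlying ingredients are exactly those of the paper: the explicit formula $i_k+2j_k=d_{r_k+1}-(m_{r_k+1}-m_{r_k})$ (resp.\ its $m$-analogue), the bound $m_{r_k+1}-m_{r_k}\ge 2$ (resp.\ $d_{r_k+1}-d_{r_k}\ge 2$) coming from Proposition~\ref{4 choice}(iii) together with the defining property of the $r_i$, the monotonicity $d_{r_k+1}\le d_{r_{k+1}}$, and Lemma~\ref{lem zu=dz}.

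The only difference is organisational. The paper proves (i) and (iii) independently by the same one-line computation (and then cites symmetry for (ii) and (iv)); in its proof of (i) it compares $i_k+2j_k+1$ directly to $m_{r_k+1}\le m_{r_{k'}}$ without passing through $(k+1)_x$. You instead establish (iii) and (iv) first and then obtain (i) and (ii) by chaining (iv) (resp.\ (iii)) with $m_{r_{k+1}}\le m_{r_{k'}}$ (resp.\ $d_{r_{k+1}}\le d_{r_{k'}}$). This buys you a small economy: once (iii)/(iv) are in hand, the strict inequalities (i)/(ii) drop out without repeating the ``$\ge 2$'' step. Conversely, the paper's route makes the role of $r_k+1$ versus $r_{k+1}$ a touch more transparent. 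Either way the arithmetic is identical, and your handling of the boundary cases ($k\le l$, the separate clause for $k'=l+1$) is accurate.
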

\begin{proof}
	(i) We have $k_x=i_k$ and ${k'}_x=i_{k'}$. Clearly, $k<{k'}$ implies $m_{r_k+1}\leq m_{r_{k'}}$. Moreover, we have $d_{r_k+1}-d_{r_k}>1$ by definition of $r_k$. This implies $m_{r_k+1}-(d_{r_k+1}-d_{r_k})+1 < m_{r_k+1}$,  
	$m_{r_k}+2(\frac{(m_{r_{k+1}}-m_{r_k})-(d_{r_{k+1}}-d_{r_k})}{2})+1< m_{r_k+1}\leq m_{r_k'}$, $m_{r_k}+2j_k+1 < m_{r_{k'}}$, and thus 
	$i_k+2j_k+1 < i_{k'}$  by Lemma \ref{lem zu=dz}.   \\
	
	\noindent	(ii) The proof is similar to (i). \\
	
	\noindent	(iii) We have that $d_{r_k+1}-d_{r_k}>m_{r_k+1}-m_{r_k}> 1$ and $d_{r_k+1}\leq d_{r_{k+1}}$. This implies $d_{r_k+1}+1-(m_{r_k+1}-m_{r_k})<d_{r_k+1}$ and 
	$d_{r_k+1}-(m_{r_k+1}-m_{r_k})+2\leq d_{r_k+1}\leq d_{r_{{k+1}}}$. 
	Moreover, we have $d_{r_k+1}-(m_{r_k+1}-m_{r_k})+2 = d_{r_k}+2(\frac{(d_{r_{k+1}}-d_{r_k})-(m_{r_{k+1}}-m_{r_k})}{2})+2= d_{r_k}+2j_k+2 = i_k+2j_k+2$ and $d_{r_k}=i_k$. Therefore, $i_k+2j_k+2\leq d_{r_k+1}\leq d_{r_{k+1}}=(k+1)_u$ by Lemma \ref{lem zu=dz}. \\
	
	\noindent	(iv) The proof is similar to (iii).
\end{proof}
We observe that $d_{r_k}$ can be calculated from $m_{r_k}$(and conversely) using the length of appropriate subwords of $w_\alpha^*$. Subsequently, we will use  that $\lvert w_{s_k} \rvert = j_{s_k}$ for all $k\in\{1,...,a\}$ and $\lvert w_{s_{a+d}}^{-1} \rvert = j_{s_{a+d}}$ for all $d\in\{1,...,b\}$.
\begin{lem} \label{u>x} Let $d\in\{1,...,b\}$. If there is the least $k\in\{1,...,a\}$ such that $s_k>s_{a+d}$  then $d_{r_{s_{a+d}}}=i_{s_{a+d}}+2\lvert w_{s_{a+d}}^{-1}...w_{s_{a+1}}^{-1}\rvert - 2\lvert w_{s_k}...w_{s_a}\rvert.$ Otherwise, $d_{r_{s_{a+d}}}=i_{s_{a+d}}+2\lvert w_{s_{a+d}}^{-1}...w_{s_{a+1}}^{-1}\rvert.$
\end{lem}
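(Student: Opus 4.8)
The plan is to recover the domain value $d_{r_{s_{a+d}}}=(s_{a+d})_u$ from the image value $i_{s_{a+d}}=m_{r_{s_{a+d}}}=(s_{a+d})_x$ by telescoping the recursive definitions of $k_u$ and $k_x$ along the index chain $s_{a+d}, s_{a+d}+1,\dots,l+1$, and then re-expressing the accumulated correction terms as word lengths. The key numerical identity is that for each intermediate index $k$ with $w_k\in W_x$ one has $k_u-k_x=(k+1)_u-(k+1)_x+2j_k$ (subtract the two defining relations $k_u=(k+1)_u-b_k-2$, $k_x=i_k$, using $b_k=(k+1)_x-i_k-2j_k-2$), while for $w_k\in W_u$ one has $k_u-k_x=(k+1)_u-(k+1)_x-2j_k$ (similarly from $k_u=i_k$, $k_x=(k+1)_x-a_k-2$, $a_k=(k+1)_u-i_k-2j_k-2$); and at the top, $(l+1)_u-(l+1)_x$ equals $+2j_{l+1}$ if $w_{l+1}\in W_x$, $-2j_{l+1}$ if $w_{l+1}\in W_u$, and $0$ if $m_p=d_p$. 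Summing these increments from $k=s_{a+d}$ up to $k=l+1$ gives $d_{r_{s_{a+d}}}-i_{s_{a+d}}=(s_{a+d})_u-(s_{a+d})_x = 2\sum_{k\ge s_{a+d},\, w_k\in W_x} j_k - 2\sum_{k> s_{a+d},\, w_k\in W_u} j_k$, where the first sum ranges over all $W_x$-indices $\ge s_{a+d}$ (including $s_{a+d}$ itself) and the second over all $W_u$-indices strictly greater than $s_{a+d}$.

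Next I would translate these two sums into the word-length expressions appearing in the statement. Since $w_{s_{a+1}},\dots,w_{s_{a+b}}$ is exactly the list of $W_x$-subwords arranged with $s_{a+b}<\cdots<s_{a+1}$, the $W_x$-indices that are $\ge s_{a+d}$ are precisely $s_{a+1},\dots,s_{a+d}$, so $\sum_{k\ge s_{a+d},\, w_k\in W_x} j_k = j_{s_{a+1}}+\cdots+j_{s_{a+d}} = \lvert w_{s_{a+d}}^{-1}\cdots w_{s_{a+1}}^{-1}\rvert$, using the remark preceding the lemma that $\lvert w_{s_{a+e}}^{-1}\rvert=j_{s_{a+e}}$. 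For the $W_u$-part: the $W_u$-indices are $s_1<\cdots<s_a$, so those strictly greater than $s_{a+d}$ form a terminal segment $s_k,\dots,s_a$ exactly when $k$ is the least index in $\{1,\dots,a\}$ with $s_k>s_{a+d}$; in that case $\sum_{k'> s_{a+d},\, w_{k'}\in W_u} j_{k'} = j_{s_k}+\cdots+j_{s_a} = \lvert w_{s_k}\cdots w_{s_a}\rvert$. If no such $k$ exists (i.e. every $W_u$-index is $<s_{a+d}$), this sum is empty and contributes $0$. Substituting both into the displayed identity yields exactly the two cases in the statement.

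The main obstacle I anticipate is bookkeeping rather than conceptual: one must be careful that the recursion for $k_u,k_x$ is only defined for $k\le l$ with the base case at $k=l+1$, so the telescoping chain from $s_{a+d}$ to $l+1$ is well-posed and every index $k$ strictly between carries a well-defined $\pm 2j_k$ increment of the correct sign according to whether $w_k\in W_x$ or $w_k\in W_u$ — and in particular that the increment attached to index $s_{a+d}$ itself is the $W_x$-type one (since $w_{s_{a+d}}\in W_x$ by hypothesis). A secondary point requiring care is matching the indexing convention: $r_{l+1}=p$ and Lemma~\ref{lem zu=dz} ($k_u=d_{r_k}$, $k_x=m_{r_k}$) must be invoked to pass between the abstract quantities $k_u,k_x$ and the concrete $d_{r_k},m_{r_k}$, and to identify $i_{s_{a+d}}=m_{r_{s_{a+d}}}=(s_{a+d})_x$. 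Once the signs and the terminal-segment description of the $W_u$-indices exceeding $s_{a+d}$ are pinned down, the computation is a direct induction on the chain length (equivalently, on $l+1-s_{a+d}$), with Lemma~\ref{1-4} available if one needs the subwords' variable-disjointness, though it should not actually be needed for this particular length identity.
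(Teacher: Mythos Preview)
Your proposal is correct and follows essentially the same telescoping approach as the paper's proof: both sum the signed contributions $\pm 2j_k$ along the index chain from $s_{a+d}$ to $l{+}1$ and then identify the two resulting sums with $\lvert w_{s_{a+d}}^{-1}\cdots w_{s_{a+1}}^{-1}\rvert$ and $\lvert w_{s_k}\cdots w_{s_a}\rvert$. The only cosmetic difference is that the paper substitutes the explicit formulas $j_t=\tfrac12\bigl|(m_{r_{t+1}}-m_{r_t})-(d_{r_{t+1}}-d_{r_t})\bigr|$ and telescopes directly in the $m,d$ variables, whereas you first isolate the clean recursion $k_u-k_x=(k{+}1)_u-(k{+}1)_x\pm 2j_k$ and then invoke Lemma~\ref{lem zu=dz}; the content is the same.
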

\begin{proof} Suppose there is the least $k\in\{1,...,a\}$ such that $s_k>s_{a+d}$. 
	Note that $i_{s_{a+d}}+2\lvert w_{s_{a+d}}^{-1}...w_{s_{a+1}}^{-1}\rvert-2\lvert w_{s_k}...w_{s_a}\rvert=i_{s_{a+d}}+2\lvert w_{s_{a+d}}^{-1}\rvert+\cdot\cdot\cdot+2\lvert w_{s_{a+1}}^{-1}\rvert-2\lvert w_{s_k}\rvert-\cdot\cdot\cdot-2\lvert w_{s_a}\rvert$ 
	and $\lvert w_{s_{a+d}}^{-1}\rvert= 
	\frac{(m_{r_{s_{a+d}+1}}-m_{r_{s_{a+d}}})-(d_{r_{s_{a+d}}+1}-d_{r_{s_{a+d}}})}{2}$ 
	since $m_{r_{{s_{a+d}}+1}}-m_{r_{s_{a+d}}+1}=d_{r_{{s_{a+d}}+1}}-d_{r_{s_{a+d}}+1}$. 
	Let $t\in\{{s_{a+d}}+1,...,l\}$. If $w_{t}\in W_x$ then $\lvert w_{t}\rvert=\frac{(m_{r_{t+1}}-m_{r_{t}})-(d_{r_{t+1}}-d_{r_{t}})}{2}$.	If $w_{t}\in W_u$   then we have  $-\lvert w_{t}\rvert=-(\frac{(d_{r_{t+1}}-d_{r_{t}})-(m_{r_{t+1}}-m_{r_{t}})}{2})$. Moreover $\lvert w_{l+1}\rvert=\frac{m_{r_{l+1}}-d_{r_{l+1}}}{2}$, whenever $w_{l+1}\in W_x$ and $-\lvert w_{l+1}\rvert=-(\frac{d_{r_{l+1}}-m_{r_{l+1}}}{2})$,  whenever $w_{l+1}\in W_u$.  We observe that $s_k>s_{a+d}$ provides
	$i_{s_{a+d}}+2\lvert w_{s_{a+d}}^{-1}\rvert+\cdot\cdot\cdot+2\lvert w_{s_{a+1}}^{-1}\rvert-2\lvert w_{s_k}\rvert-\cdot\cdot\cdot-2\lvert w_{s_a}\rvert= i_{s_{a+d}}+(m_{r_{{s_{a+d}}+1}}-m_{r_{s_{a+d}}}-d_{r_{{s_{a+d}}+1}}+d_{r_{s_{a+d}}})+  (m_{r_{{s_{a+d}}+2}}-m_{r_{{s_{a+d}}+1}}-d_{r_{{s_{a+d}}+2}}+d_{r_{{s_{a+d}}+1}})+\cdot\cdot\cdot+ (m_{r_{l+1}}-m_{r_{l}}-d_{r_{l+1}}+d_{r_{l}})+(d_{r_{l+1}}-m_{r_{l+1}}) = i_{s_{a+d}}-m_{r_{s_{a+d}}}+d_{r_{s_{a+d}}} = i_{s_{a+d}}-i_{s_{a+d}}+d_{r_{s_{a+d}}}= d_{r_{s_{a+d}}}$. 
	\\	\indent Suppose now that $s_k<s_{a+d}$  for all $k\in\{1,...,a\}$ or $w_\alpha^*=w_{s_{a+1}}^{-1}...w_{s_{a+b}}^{-1}$. Then $i_{s_{a+d}}+2\lvert w_{s_{a+d}}^{-1}...w_{s_{a+1}}^{-1}\rvert=i_{s_{a+d}}+2\lvert w_{s_{a+d}}^{-1}\rvert+\cdot\cdot\cdot+2\lvert w_{s_{a+1}}^{-1}\rvert= i_{s_{a+d}}+(m_{r_{{s_{a+d}}+1}}-m_{r_{s_{a+d}}}-d_{r_{{s_{a+d}}+1}}+d_{r_{s_{a+d}}})+  (m_{r_{{s_{a+d}}+2}}-m_{r_{{s_{a+d}}+1}}-d_{r_{{s_{a+d}}+2}}+d_{r_{{s_{a+d}}+1}})+\cdot\cdot\cdot+ (m_{r_{l+1}}-m_{r_{l}}-d_{r_{l+1}}+d_{r_{l}})+(d_{r_{l+1}}-m_{r_{l+1}}) 
	= i_{s_{a+d}}-m_{r_{s_{a+d}}}+d_{r_{s_{a+d}}} 
	= i_{s_{a+d}}-i_{s_{a+d}}+d_{r_{s_{a+d}}} 
	= d_{r_{s_{a+d}}}.$
\end{proof}
Similarly, we can prove:
\begin{lem} \label{x>u}
	Let $k\in\{1,...,a\}$. If there is the greatest $d\in\{1,...,b\}$ such that $s_{a+d}>s_{k}$ then $m_{r_{s_{k}}}=i_{s_k}+2\lvert w_{s_k}...w_{s_a}\rvert-2\lvert w_{s_{a+d}}^{-1}...w_{s_{a+1}}^{-1}\rvert.$ Otherwise, $m_{r_{s_{k}}}=i_{s_k}+2\lvert w_{s_k}...w_{s_a}\rvert.$
\end{lem}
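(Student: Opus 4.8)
My plan is to run the argument of Lemma~\ref{u>x} in its mirror form, exchanging the roles of $d$ and $m$ (equivalently, of $u$-words and $x$-words). Since $w_{s_k}\in W_u$, Lemma~\ref{lem zu=dz} together with the definition of $k_u$ for a $W_u$-word gives $i_{s_k}=(s_k)_u=d_{r_{s_k}}$; hence in both cases the statement reduces to the single identity $m_{r_{s_k}}-d_{r_{s_k}}=2\lvert w_{s_k}\cdots w_{s_a}\rvert-2\lvert w_{s_{a+d}}^{-1}\cdots w_{s_{a+1}}^{-1}\rvert$, with the second summand simply dropped in the ``otherwise'' case.

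To prove this identity I would first pin down the index set being summed. Since $\{s_1,\ldots,s_a\}$ lists the indices of the $W_u$-words among $w_1,\ldots,w_{a+b}$ and $\{s_{a+1},\ldots,s_{a+b}\}$ lists those of the $W_x$-words, we have $\lvert w_{s_k}\cdots w_{s_a}\rvert=\sum\{\lvert w_t\rvert:w_t\in W_u,\ t\ge s_k\}$, while (when the greatest such $d$ exists) $s_{a+d}$ is the smallest $W_x$-index exceeding $s_k$, so that $\lvert w_{s_{a+d}}^{-1}\cdots w_{s_{a+1}}^{-1}\rvert=\sum\{\lvert w_t\rvert:w_t\in W_x,\ t> s_k\}$. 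As $w_{s_k}$ is itself a $W_u$-word, these two index families together form the contiguous block $\{s_k,s_k+1,\ldots,a+b\}$. Next I would use $\lvert w_{s_k}\rvert=j_{s_k}$ and the defining formulas for the $w_t$, together with the observation $(d_{r_t+1}-d_{r_t})-(m_{r_t+1}-m_{r_t})=(d_{r_{t+1}}-d_{r_t})-(m_{r_{t+1}}-m_{r_t})$ from the proof of Lemma~\ref{lem zu=dz}, to get, for $t\le l$, that $2\lvert w_t\rvert=(d_{r_{t+1}}-d_{r_t})-(m_{r_{t+1}}-m_{r_t})$ when $w_t\in W_u$ and $2\lvert w_t\rvert=(m_{r_{t+1}}-m_{r_t})-(d_{r_{t+1}}-d_{r_t})$ when $w_t\in W_x$; in either case the signed contribution (sign $+$ for $W_u$ and $-$ for $W_x$, as dictated by the target expression) equals $(d_{r_{t+1}}-d_{r_t})-(m_{r_{t+1}}-m_{r_t})$. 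Finally, a short case check on the definition of $w_{l+1}$ shows that its signed contribution is $m_{r_{l+1}}-d_{r_{l+1}}$ in every case, this being $0$ exactly when $w_{l+1}=\epsilon$, i.e. $d_p=m_p$.

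Putting the pieces together, summing the contributions over $t\in\{s_k,\ldots,l\}$ telescopes to $(d_{r_{l+1}}-d_{r_{s_k}})-(m_{r_{l+1}}-m_{r_{s_k}})$, and adding the $w_{l+1}$ term $m_{r_{l+1}}-d_{r_{l+1}}$ yields precisely $m_{r_{s_k}}-d_{r_{s_k}}$, as needed. In the ``otherwise'' case no $W_x$-word has index exceeding $s_k$, so in particular $w_{l+1}\notin W_x$, the second length does not appear, and the same telescoping over $\{s_k,\ldots,a+b\}$ gives $m_{r_{s_k}}-d_{r_{s_k}}=2\lvert w_{s_k}\cdots w_{s_a}\rvert$. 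I expect the only delicate point to be exactly this bookkeeping — checking that the $W_u$-indices $\ge s_k$ and the $W_x$-indices $>s_k$ assemble into one unbroken block ending at $a+b$, and that the boundary word $w_{l+1}$ is absorbed uniformly — since once the index set is fixed the computation is formally identical to that of Lemma~\ref{u>x}.
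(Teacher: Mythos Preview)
Your proposal is correct and is precisely the mirror of the paper's proof of Lemma~\ref{u>x}, which is exactly what the paper intends by ``Similarly, we can prove.'' Your repackaging of the computation as a telescoping sum of signed contributions over the contiguous index block $\{s_k,\ldots,a+b\}$ is a clean way to present the same identity that the paper obtains by expanding each $\lvert w_t\rvert$ term-by-term.
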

Moreover, we have two technical lemmas.
\begin{lem} \label{A tran= word}
	Let  $k\in\{1,...,a\}$. It holds $d_{r_{s_k+1}}-(m_{r_{s_k+1}}-m_{r_{s_k}}-2)=i_{s_k}+2\lvert w_{s_k}\rvert+2$. 
\end{lem}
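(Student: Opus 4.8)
The plan is to unwind the definitions on the left-hand side, expressing everything in terms of the data $d_{r_{s_k}}, m_{r_{s_k}}, d_{r_{s_k+1}}, m_{r_{s_k+1}}$ attached to the index $r_{s_k}$, and to recognize the claimed right-hand side as the same quantity. First I would recall that since $k\in\{1,\dots,a\}$, the subword $w_{s_k}$ lies in $W_u$ by the very way the list $w_{s_1},\dots,w_{s_a}$ was selected; hence $w_{s_k}=u_{i_{s_k},j_{s_k}}$ with $i_{s_k}=d_{r_{s_k}}$ and, by the case distinction defining $w_i$, the integer
\[
j_{s_k}=\frac{(d_{r_{s_k}+1}-d_{r_{s_k}})-(m_{r_{s_k}+1}-m_{r_{s_k}})}{2}.
\]
Here I am using the convention noted just before the lemma that $\lvert w_{s_k}\rvert=j_{s_k}$.

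Next I would substitute these two identities into the right-hand side $i_{s_k}+2\lvert w_{s_k}\rvert+2$. This gives
\[
d_{r_{s_k}}+2\cdot\frac{(d_{r_{s_k}+1}-d_{r_{s_k}})-(m_{r_{s_k}+1}-m_{r_{s_k}})}{2}+2
= d_{r_{s_k}}+(d_{r_{s_k}+1}-d_{r_{s_k}})-(m_{r_{s_k}+1}-m_{r_{s_k}})+2,
\]
and the $d_{r_{s_k}}$ terms cancel, leaving $d_{r_{s_k}+1}-(m_{r_{s_k}+1}-m_{r_{s_k}})+2=d_{r_{s_k}+1}-(m_{r_{s_k}+1}-m_{r_{s_k}}-2)$, which is exactly the left-hand side. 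So the proof is essentially a one-line algebraic identity once the correct reading of $i_{s_k}$ and $\lvert w_{s_k}\rvert$ is in place; I would invoke Lemma \ref{lem zu=dz} for $k_u=d_{r_k}$ (applied to the index $s_k$) to justify $i_{s_k}=d_{r_{s_k}}$, and the defining formula for $w_i$ in the case $m_{r_i+1}-m_{r_i}<d_{r_i+1}-d_{r_i}$ to justify the value of $j_{s_k}$.

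The only real subtlety, and thus the main thing to get right rather than an obstacle, is bookkeeping: one must be certain that $w_{s_k}\in W_u$ (so that the $u$-case of the definition applies and $i_{s_k}$ truly equals $d_{r_{s_k}}$, not $m_{r_{s_k}}$), and that $r_{s_k}$ is a genuine index in $\{1,\dots,l\}$ rather than $l+1$ — but since $k\le a\le a+b$ and $r_{s_k}$ ranges over the jump-indices, $d_{r_{s_k}+1}-d_{r_{s_k}}>m_{r_{s_k}+1}-m_{r_{s_k}}>1$ holds by the definition of the $r_i$ together with the $u$-case, so $j_{s_k}$ is a positive integer and all the substitutions are legitimate. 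No limiting or degenerate case arises, so the argument is complete after the cancellation above.
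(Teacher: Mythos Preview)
Your computation of the right-hand side is clean and correct, but there is a genuine indexing slip at the last step. The left-hand side of the lemma carries the subscript $r_{s_k+1}$, meaning the $(s_k{+}1)$-st term of the sequence $r_1<\cdots<r_{l+1}$; it is \emph{not} $r_{s_k}+1$, the integer one larger than $r_{s_k}$. Your algebra lands on
\[
d_{r_{s_k}+1}-\bigl(m_{r_{s_k}+1}-m_{r_{s_k}}-2\bigr),
\]
with the latter subscript, and you then declare this to be ``exactly the left-hand side''. In general $r_{s_k+1}\neq r_{s_k}+1$, since the $r_i$ are a sparse subset of $\{1,\dots,p-1\}$, so this identification needs justification.

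The fix is a one-line telescoping argument you omitted: by condition~(iii) in the definition of $r_1,\dots,r_l$, every index $i$ with $r_{s_k}<i<r_{s_k+1}$ satisfies $d_{i+1}-d_i=m_{i+1}-m_i$, so $d_j-m_j$ is constant for $j\in\{r_{s_k}+1,\dots,r_{s_k+1}\}$; in particular $d_{r_{s_k}+1}-m_{r_{s_k}+1}=d_{r_{s_k+1}}-m_{r_{s_k+1}}$, which converts your expression into the stated left-hand side. (The paper itself records and uses exactly this identity in the proof of Lemma~\ref{u>x}.) With that line inserted, your argument is complete and in fact more direct than the paper's own proof, which expresses $d_{r_{s_k+1}}$, $m_{r_{s_k+1}}$ and $m_{r_{s_k}}$ through Lemmas~\ref{lem zu=dz} and~\ref{x>u} in terms of word lengths and then cancels, splitting into several cases according to whether $w_{s_k+1}\in W_u$ or $W_x$.
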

\begin{proof}  First, we consider the case there is the greatest $d\in\{1,...,b\}$ such that $s_{a+d}>s_k$. Suppose  $w_{s_{k}+1}\in W_u$, i.e. $s_k+1=s_{k+1}$. Then by Lemmas \ref{lem zu=dz} and \ref{x>u}, we obtain
	$d_{r_{{s_k}+1}}=i_{s_{k+1}}, m_{r_{{s_k}+1}}=i_{s_{k+1}}+2\lvert w_{s_{k+1}}...w_{s_a}\rvert-2\lvert w_{s_{a+d}}^{-1}...w_{s_{a+1}}^{-1}\rvert$, and $m_{r_{s_k}}=i_{s_{k}}+2\lvert w_{s_{k}}...w_{s_a}\rvert-2\lvert w_{s_{a+d}}^{-1}...w_{s_{a+1}}^{-1}\rvert$. This implies 
	$ d_{r_{{s_k}+1}}-(m_{r_{{s_k}+1}}-m_{r_{s_k}}-2) = i_{s_{k+1}}-i_{s_{k+1}}-2\lvert w_{s_{k+1}}...w_{s_a}\rvert+2\lvert w_{s_{a+d}}^{-1}...w_{s_{a+1}}^{-1}\rvert+i_{s_k}+2\lvert w_{s_{k}}...w_{s_a}\rvert-\lvert w_{s_{a+d}}^{-1}...w_{s_{a+1}}^{-1}\rvert+2 
	= i_{s_k}+2\lvert w_{s_{k}}\rvert+2$. Suppose  $w_{s_{k}+1}\in W_x$, i.e. $s_k+1=s_{a+d}$. Then by Lemmas \ref{lem zu=dz}, \ref{u>x}, and, \ref{x>u}, we get 
	$d_{r_{{s_k}+1}}=i_{s_{a+d}}+2\lvert w_{s_{a+d}}^{-1}...w_{s_{a+1}}^{-1}\rvert-2\lvert w_{s_{k+1}}...w_{s_a}\rvert, \ m_{r_{{s_k}+1}}=i_{s_{a+d}}$, and $m_{r_{s_k}}=i_{s_{k}}+2\lvert w_{s_{k}}...w_{s_a}\rvert-2\lvert w_{s_{a+d}}^{-1}...w_{s_{a+1}}^{-1}\rvert$. This implies  	$ d_{r_{{s_k}+1}}-(m_{r_{{s_k}+1}}-m_{r_{s_k}}-2) 
	=i_{s_{a+d}}+2\lvert w_{s_{a+d}}^{-1}...w_{s_{a+1}}^{-1}\rvert-2\lvert w_{s_{k+1}}...w_{s_a}\rvert-i_{s_{a+d}}+i_{s_k}+2\lvert w_{s_{k}}...w_{s_a}\rvert-2\lvert w_{s_{a+d}}^{-1}...w_{s_{a+1}}^{-1}\rvert+2 
	= i_{s_k}+2\lvert w_{s_{k}}\rvert+2$. \\
	\indent	It remains the case that $w_r\in W_u$ for all $r\in\{{s_k}+1,...,a+b\}$. By  Lemmas \ref{lem zu=dz}, and \ref{x>u}, we obtain $d_{r_{{s_k}+1}}=i_{s_{k+1}}, m_{r_{{s_k}+1}}=i_{s_{k+1}}+2\lvert w_{s_{k+1}}...w_{s_a}\rvert$, and $m_{r_{s_k}}=i_{s_{k}}+2\lvert w_{s_{k}}...w_{s_a}\rvert$. This implies $ d_{r_{{s_k}+1}}-(m_{r_{{s_k}+1}}-m_{r_{s_k}}-2) = i_{s_{k+1}}-i_{s_{k+1}}-2\lvert w_{s_{k+1}}...w_{s_a}\rvert+i_{s_k}+2\lvert w_{s_{k}}...w_{s_a}\rvert+2 = i_{s_k}+2\lvert w_{s_{k}}\rvert+2$.
\end{proof}
\begin{lem} \label{8}
	Let $d\in\{1,...,b\}$. It holds $d_{r_{s_{a+d}}}+2=({s_{a+d}}+1)_u-b_{s_{a+d}}$. 
\end{lem}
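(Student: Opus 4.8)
The plan is to unwind both sides of the claimed identity $d_{r_{s_{a+d}}}+2=(s_{a+d}+1)_u-b_{s_{a+d}}$ using the recursive definition of the integers $k_u, k_x$ together with Lemma~\ref{lem zu=dz}. First I would observe that the index $s_{a+d}$ satisfies $w_{s_{a+d}}\in W_x$, so that $s_{a+d}$ is one of those $k\in\{1,\dots,l\}$ for which $w_k\in W_x$ (the case $s_{a+d}=l+1$ can only occur when $w_{l+1}\in W_x$, and is handled the same way, reading $(l+1)$-indices from their direct definition). Hence, by the defining clause "if $w_k\in W_x$ then $k_u=(k+1)_u-b_k-2$ with $b_k=(k+1)_x-i_k-2j_k-2$", applied with $k=s_{a+d}$, we have
\begin{center}
$(s_{a+d}+1)_u-b_{s_{a+d}}=(s_{a+d})_u+2$.
\end{center}
So the right-hand side of the claimed identity equals $(s_{a+d})_u+2$, and the lemma reduces to the assertion $(s_{a+d})_u=d_{r_{s_{a+d}}}$.

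The second and final step is then immediate: Lemma~\ref{lem zu=dz} states precisely that $k_u=d_{r_k}$ for all $k\in\{1,\dots,l+1\}$, so taking $k=s_{a+d}$ gives $(s_{a+d})_u=d_{r_{s_{a+d}}}$, and combining with the previous display yields $d_{r_{s_{a+d}}}+2=(s_{a+d}+1)_u-b_{s_{a+d}}$, as required. If one prefers a more self-contained argument avoiding the forward reference to $(s_{a+d})_u$, one can instead expand $b_{s_{a+d}}=(s_{a+d}+1)_x-i_{s_{a+d}}-2j_{s_{a+d}}-2$ and use Lemma~\ref{lem zu=dz} on the index $s_{a+d}+1$ (i.e.\ $(s_{a+d}+1)_u=d_{r_{s_{a+d}+1}}$ and $(s_{a+d}+1)_x=m_{r_{s_{a+d}+1}}$) together with $i_{s_{a+d}}=m_{r_{s_{a+d}}}$ (again Lemma~\ref{lem zu=dz}) and $j_{s_{a+d}}=\lvert w_{s_{a+d}}\rvert=\frac{(m_{r_{s_{a+d}+1}}-m_{r_{s_{a+d}}})-(d_{r_{s_{a+d}+1}}-d_{r_{s_{a+d}}})}{2}$, which telescopes the right-hand side down to $d_{r_{s_{a+d}}}+2$.

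I do not expect any real obstacle here: the lemma is essentially a bookkeeping identity that repackages Lemma~\ref{lem zu=dz} and the recursion for $k_u$ in the form that will be convenient in the subsequent verification that $\overline{w}_\alpha=\alpha$. The only point requiring a little care is the boundary case $s_{a+d}=l+1$ (when $d_p\neq m_p$ and $w_{l+1}\in W_x$), where $b_{l+1}$ is not defined by the generic clause; there one checks the identity directly from $(l+1)_u=i_{l+1}+2j_{l+1}$, $(l+1)_x=i_{l+1}$, and the definition of $w_{l+1}$, which gives $(l+1)_u=i_{l+1}+2\cdot\frac{d_p-m_p}{2}=d_p=d_{r_{l+1}}$, matching the general formula with the convention for $b_{l+1}$.
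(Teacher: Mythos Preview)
Your argument is correct and considerably more direct than the paper's. You observe that, since $w_{s_{a+d}}\in W_x$, the recursive definition gives $(s_{a+d})_u=(s_{a+d}+1)_u-b_{s_{a+d}}-2$, so $(s_{a+d}+1)_u-b_{s_{a+d}}=(s_{a+d})_u+2$, and then Lemma~\ref{lem zu=dz} finishes. The paper instead proceeds by a three-way case split on whether $w_{s_{a+d}+1}\in W_u$, $w_{s_{a+d}+1}\in W_x$, or all later $w_r$ lie in $W_x$; in each case it invokes Lemma~\ref{u>x} to expand $d_{r_{s_{a+d}}}$ in terms of word lengths and then computes $(s_{a+d}+1)_u-b_{s_{a+d}}$ explicitly. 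Your one-line rearrangement of the defining recursion bypasses all of that; the paper's route is the analogue of your ``self-contained'' alternative, but carried out case by case rather than via the telescoping you describe.

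One small remark on the boundary case you flag: if $s_{a+d}=l+1$ (which happens when $w_{l+1}\in W_x$ and $d=1$), neither $b_{l+1}$ nor $(l+2)_u$ is defined, so it is the \emph{statement} of the lemma, not just the proof, that does not literally make sense there. The paper's proof does not treat this case either; in the applications (Propositions~\ref{10} and~\ref{not1}) one always has $s_{a+d}\le l$, so the point is harmless.
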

\begin{proof} First, we consider the case that there is the least $k\in\{1,...,a\}$ such that $s_k>s_{a+d}$. Suppose  $w_{s_{a+d}+1}\in W_u$, i.e. $s_{a+d}+1=s_{k}$. Then by Lemmas \ref{lem zu=dz} and \ref{u>x}, we get
	$d_{r_{s_{a+d}}}+2=i_{s_{a+d}}+2\lvert w_{s_{a+d}}^{-1}...w_{s_{a+1}}^{-1}\rvert-2\lvert w_{s_{k}}...w_{s_a}\rvert+2$ and  $({s_{a+d}}+1)_u-b_{s_{a+d}}=i_{s_{k}}-(({s_{a+d}}+1)_x-i_{s_{a+d}}-2\lvert w_{s_{a+d}}^{-1}\rvert-2)=i_{s_{k}}-(i_{s_k}+2\lvert w_{s_{k}}...w_{s_a}\rvert-2\lvert w_{s_{a+d-1}}^{-1}...w_{s_{a+1}}^{-1}\rvert)+i_{s_{a+d}}+2\lvert w_{s_{a+d}}^{-1}\rvert+2=i_{s_{a+d}}+2\lvert w_{s_{a+d}}^{-1}...w_{s_{a+1}}^{-1}\rvert-2\lvert w_{s_{k}}...w_{s_a}\rvert+2= d_{r_{s_{a+d}}}+2$. Suppose  $w_{s_{a+d}+1}\in W_x$, i.e. $s_{a+d}+1=s_{a+d-1}$. Then by Lemmas \ref{lem zu=dz} and \ref{u>x}, we get 
	$d_{r_{s_{a+d}}}+2=i_{s_{a+d}}+2\lvert w_{s_{a+d}}^{-1}...w_{s_{a+1}}^{-1}\rvert-2\lvert w_{s_{k}}...w_{s_a}\rvert+2$ and  $({s_{a+d}}+1)_u-b_{s_{a+d}}=i_{s_{a+d-1}}+2\lvert w_{s_{a+d-1}}^{-1}...w_{s_{a+1}}^{-1}\rvert-2\lvert w_{s_{k}}...w_{s_a}\rvert-(i_{s_{a+d-1}}-i_{s_{a+d}}-2\lvert w_{s_{a+d}}^{-1}\rvert-2)
	=i_{s_{a+d}}+2\lvert w_{s_{a+d}}^{-1}...w_{s_{a+1}}^{-1}\rvert-2\lvert w_{s_{k}}...w_{s_a}\rvert+2= d_{r_{s_{a+d}}}+2$. \\
	\indent	It remains the case that $w_r\in W_x$ for all $r\in\{{s_{a+d}}+1,...,a+b\}$. By  Lemmas \ref{lem zu=dz} and \ref{u>x}, we obtain 
	$d_{r_{s_{a+d}}}+2=i_{s_{a+d}}+2\lvert w_{s_{a+d}}^{-1}...w_{s_{a+1}}^{-1}\rvert+2$ and  $({s_{a+d}}+1)_u-b_{s_{a+d}}=i_{s_{a+d-1}}+2\lvert w_{s_{a+d-1}}^{-1}...w_{s_{a+1}}^{-1}\rvert-i_{s_{a+d-1}}+i_{s_{a+d}}+2\lvert w_{s_{a+d}}^{-1}\rvert+2=i_{s_{a+d}}+2\lvert w_{s_{a+d}}^{-1}...w_{s_{a+1}}^{-1}\rvert+2= d_{r_{s_{a+d}}}+2$.
\end{proof}	
If $\rho\in dom(\alpha)$ then $\rho\notin A$ as already mentioned. So, we have $\rho\overline{v}_A=\rho$ for all $\rho\in dom(\alpha)$. Moreover, if we apply $\overline{w}_\alpha=\overline{v}_A\overline{{w}}_{s_1}...\overline{{w}}_{s_a}{\overline{w_{s_{a+1}}^{-1}}}...{\overline{w_{s_{a+b}}^{-1}}}$ to an element $\rho$ of the domain of $\alpha$, then each of the transformations $\overline{{w}}_{s_1},...,\overline{{w}}_{s_a}$ adds successively the double of the length of the corresponding word to $\rho$ or maps identical. So we obtain $\rho'=\rho\overline{v}_A\overline{{w}}_{s_1}...\overline{{w}}_{s_a}$ for some $\rho'\in \overline{n}$. On the other hand, each of the transformations ${\overline{w_{s_{a+1}}^{-1}}},...,{\overline{w_{s_{a+b}}^{-1}}}$ cancels successively the double of the length of the corresponding word from $\rho'$ or maps identical. This procedure will be clear by Remark \ref{move}, which characterizes the transformations corresponding to the words in $W_u\cup W_x$. \\
\indent By the definitions of the transformations $\overline{u}_i$ and  $\overline{x}_i$, $i\in\{1,...,n-2\}$, we can easy determine the transformation $\overline{s}$, for  all $s\in W_u\cup W_x$.
\begin{remark} \label{move}
	Let $i\in\{1,...,n-2\}$ and $j\in\{1,...,\lfloor{\frac{n-i}{2}}\rfloor\}  $.	We have: \\
	(i)	$dom(\overline{u}_i)=\{1,...,i,i+4,...,n\}$ and 
	$\rho\overline{u}_i = \left\{ \begin{array}{rcl}
	\rho+2 & \mbox{for}
	& \rho\leq i ; \\ \rho & \mbox{for} & \rho\geq i+4 .
	\end{array}\right.$ \\
	(ii) $dom(\overline{u}_{i,j})=\{1,...,i,i+2j+2,...,n\}$ and  $\rho\overline{u}_{i,j} = \left\{ \begin{array}{rcl}
	\rho+2j & \mbox{for}
	& \rho\leq i ;\\ \rho & \mbox{for} & \rho\geq i+2j+2.
	\end{array}\right.$ \\
	(iii) $dom(\overline{x}_i)=\{3,...,i+2,i+4,...,n\}$ and 
	$\rho\overline{x}_i = \left\{ \begin{array}{rcl}
	\rho-2 & \mbox{for}
	& 3\leq \rho\leq i+2 ;\\ \rho & \mbox{for} & \rho\geq i+4 .
	\end{array}\right.$  
	\\	(iv) $dom(\overline{x^{-1}_{i,j}})=\{3+2j-2,...,i+2j,i+2j+2,...,n\}$ and \\ $\rho\overline{x^{-1}_{i,j}} = \left\{ \begin{array}{rcl}
	\rho-2j & \mbox{for}
	& 3+2j-2\leq \rho\leq i+2j; \\ \rho & \mbox{for} & \rho\geq i+2j+2.
	\end{array}\right.$
\end{remark}
Lemma \ref{tool} is a technical lemma. Together with Remark \ref{move}, it will give an important tool for the calculation of $\rho \overline{w_\alpha^*}$, for any $\rho\in dom( \overline{w_\alpha^*})$.
\begin{lem} \label{tool} Let $k\in\{1,...,a\}$. Further, 
	let  $\rho \in\{i_{s_k-1}+2\lvert w_{s_k-1}\rvert+2,...,i_{s_k}\}$, whenever $w_{s_k-1}\in W_u$,  let
	$\rho \in\{i_{s_k}-b_{s_k-1},...,i_{s_k}\}$, whenever $w_{s_k-1}\in W_x$,  let 
	$\rho\in\{1,...,i_{s_k}\}$, whenever $s_k=1$ and $1_u<1_x$, and let $\rho\in\{1_u-1_x+1,...,i_{s_k}\}$, whenever $s_k=1$ and $1_u>1_x$. Then we have the following statements: \\
	(i) $\rho\geq i_{s_d}+2\lvert w_{s_d}\rvert+2$ for all $d\in\{1,...,k-1\}$. \\
	(ii) $\rho+2\lvert w_{s_k}...w_{s_{d-1}}\rvert\leq i_{s_d}$ for all $d\in\{k,...,a\}$. \\
	Let $h=max\{z\in\bar{b} :  s_k<s_{a+z}\}$ (if it exists). Then: \\
	(iii) $\rho+2\lvert w_{s_k}...w_{s_a}\rvert-2\lvert w^{-1}_{s_{a+1}}...w^{-1}_{s_{a+d-1}}\rvert\leq i_{s_{a+d}}+2\lvert w^{-1}_{s_{a+d}}\rvert$ for all $d\in\{1,...,h\}$. \\
	(iv) $\rho+2\lvert w_{s_k}...w_{s_a}\rvert-																																																																																																																															2\lvert w^{-1}_{s_{a+1}}...w^{-1}_{s_{a+d-1}}\rvert\geq i_{s_{a+d}}+2\lvert w^{-1}_{s_{a+d}}\rvert+2$ for all $d\in\{h+1,...,b\}$. \\
	(v) If $s_k>s_{a+1}$ then  $i_{s_{a+1}}+2\lvert w^{-1}_{s_{a+1}}\rvert+2\leq \rho + 2\lvert w_{s_k}...w_{s_a}\rvert$. 
\end{lem}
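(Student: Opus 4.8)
The plan is to read all five inequalities as statements about which branch of the piecewise descriptions of $\overline{u}_{i,j}$ and $\overline{x^{-1}_{i,j}}$ in Remark \ref{move} is taken as the prefixes of $\overline{w_\alpha^*}$ are successively applied to $\rho$: statements (i)--(ii) govern the $W_u$-block $\overline{w}_{s_1}\cdots\overline{w}_{s_a}$ and (iii)--(v) the ensuing $W_x$-block. First I would translate every quantity that occurs (the break-points $i_{s_d}+2\lvert w_{s_d}\rvert$ and $i_{s_{a+d}}+2\lvert w^{-1}_{s_{a+d}}\rvert$, and the four possible lower ends of the range of $\rho$) into the numbers $d_{r_j},m_{r_j}$, using Lemmas \ref{lem zu=dz}, \ref{u>x}, \ref{x>u}, \ref{A tran= word}, \ref{8}, the explicit formulas for $i_i,j_i$ from the construction of the $w_i$, and $\lvert w_{s_c}\rvert=j_{s_c}$, $\lvert w^{-1}_{s_{a+d}}\rvert=j_{s_{a+d}}$. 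After this each claim becomes an inequality among sums of $d_j$'s and $m_j$'s, to be settled by monotonicity of $(d_j),(m_j)$ and by the gaps $d_{r_i+1}-d_{r_i}\geq 2$, $m_{r_i+1}-m_{r_i}\geq 2$ (which hold by Proposition \ref{4 choice}(iii)) in the amplified form supplied by Lemma \ref{1-4}.

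I would do (i) and (ii) first. By Lemma \ref{1-4}(ii) the sequence $d\mapsto i_{s_d}+2\lvert w_{s_d}\rvert$ is strictly increasing and $i_{s_{c+1}}\geq i_{s_c}+2\lvert w_{s_c}\rvert+2$; iterating the second inequality gives $i_{s_d}\geq i_{s_k}+2\lvert w_{s_k}\cdots w_{s_{d-1}}\rvert$ for $d\geq k$, which with $\rho\leq i_{s_k}$ is (ii). For (i) (vacuous when $s_k=1$) the same monotonicity reduces the task to showing that the lower end of the range of $\rho$ is at least $i_{s_{k-1}}+2\lvert w_{s_{k-1}}\rvert+2$: if $w_{s_k-1}\in W_u$ then $s_k-1=s_{k-1}$ and this is the defining lower end; if $w_{s_k-1}\in W_x$, Lemma \ref{8} rewrites the lower end $i_{s_k}-b_{s_k-1}$ as $d_{r_{s_k-1}}+2$, and Lemma \ref{1-4}(iii) gives $i_{s_{k-1}}+2\lvert w_{s_{k-1}}\rvert+2\leq (s_{k-1}+1)_u\leq d_{r_{s_k-1}}$ because $s_{k-1}+1\leq s_k-1$.

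For (iii)--(v) I would first combine (i) and (ii) with Remark \ref{move}(ii) to see that the $W_u$-block carries $\rho$ to $\rho'=\rho+2\lvert w_{s_k}\cdots w_{s_a}\rvert$, and then use Lemma \ref{x>u} to write $\rho'=\rho-i_{s_k}+m_{r_{s_k}}+2\lvert w^{-1}_{s_{a+1}}\cdots w^{-1}_{s_{a+h}}\rvert$ (the last summand being absent exactly in the situation of (v), when no $W_x$-letter has index above $s_k$). Cancelling the lengths of the $W_x$-letters already applied, (iii) becomes $m_{r_{s_k}}+2\lvert w^{-1}_{s_{a+d+1}}\cdots w^{-1}_{s_{a+h}}\rvert\leq m_{r_{s_{a+d}}}$ for $d\leq h$, which telescopes along the chain $m_{r_{s_{a+c}}}+2\lvert w^{-1}_{s_{a+c}}\rvert<m_{r_{s_{a+c-1}}}$ (Lemma \ref{1-4}(i) for the decreasing index list $s_{a+h}<\cdots<s_{a+1}$), using $m_{r_{s_k}}<m_{r_{s_{a+h}}}$. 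After the first $h$ $W_x$-letters the running value equals $\rho-i_{s_k}+m_{r_{s_k}}$, so (iv) and (v) reduce to $\rho\geq i_{s_k}-m_{r_{s_k}}+m_{r_{s_{a+d}}}+2\lvert w^{-1}_{s_{a+d}}\rvert+2$ for all $d>h$; this right-hand side decreases in $d$ (Lemma \ref{1-4}(i) again), so only $d=h+1$ needs checking, and there $s_{a+(h+1)}$ is the largest $W_x$-index below $s_k$: when it equals $s_k-1$ the identity in Lemma \ref{8} makes $d_{r_{s_k-1}}+2$ exactly the required bound, and when $w_{s_k-1}\in W_u$ one has $s_{a+(h+1)}<s_{k-1}$ and closes the remaining gap with Lemmas \ref{A tran= word} and \ref{1-4}(iv).

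I expect the delicate point to be (iii)--(iv): the $W_u$- and $W_x$-letters are interleaved in the original list $w_1,\dots,w_{l+1}$ but are applied in two separated, oppositely ordered blocks of $w_\alpha^*$, so the telescoping sums representing $d_{r_j}-m_{r_j}$ do not collapse within a single block, and one must keep exact account of which partial sums of the $\lvert w_{s_c}\rvert$'s survive the cancellation between the two blocks. This is precisely the bookkeeping that Lemmas \ref{u>x}, \ref{x>u}, \ref{A tran= word} and \ref{8} were set up to perform; once they are invoked, the remaining inequalities are a routine — if lengthy — case check over the four ranges of $\rho$.
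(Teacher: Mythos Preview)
Your plan is sound and would yield a correct proof. Parts (i) and (ii) are handled exactly as in the paper: both arguments reduce to the monotonicity from Lemma~\ref{1-4}(ii) and then check the two cases $w_{s_k-1}\in W_u$ versus $w_{s_k-1}\in W_x$ for the lower end of the range of $\rho$.

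For (iii)--(v) your route is genuinely different from the paper's. The paper proves (iii) by fixing the last $W_u$-index $m=s_{k+c}$ below $s_{a+h}$ and then \emph{iteratively unfolding} the recursive definitions of $(q)_u,(q)_x$ through $w_{m+1},w_{m+2},\dots,w_{a+b}$, splitting at every step into the subcases $w_{m+t}\in W_u$ and $w_{m+t}\in W_x$; only after this unrolling does it collapse to the desired inequality, and (iv) is treated by an analogous forward iteration with further case distinctions $t=k$, $k=t+1$, $k\ge t+2$. You instead invoke Lemma~\ref{x>u} once to replace $2\lvert w_{s_k}\cdots w_{s_a}\rvert$ by $m_{r_{s_k}}-i_{s_k}+2\lvert w^{-1}_{s_{a+1}}\cdots w^{-1}_{s_{a+h}}\rvert$ and then reduce (iii) to the pure $m$-inequality $m_{r_{s_k}}+2\lvert w^{-1}_{s_{a+d+1}}\cdots w^{-1}_{s_{a+h}}\rvert\le m_{r_{s_{a+d}}}$, which telescopes along Lemma~\ref{1-4}(i). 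This is cleaner and avoids the interleaved case analysis entirely; the paper essentially re-derives the content of Lemmas~\ref{u>x} and \ref{x>u} inside the proof of (iii)--(iv) rather than citing them. One small imprecision in your sketch: your displayed ``reduced'' form of (iv) omits the residual term $2\lvert w^{-1}_{s_{a+h+1}}\cdots w^{-1}_{s_{a+d-1}}\rvert$, so strictly speaking the right-hand side you should be showing decreases in $d$ is $m_{r_{s_{a+d}}}+2\lvert w^{-1}_{s_{a+d}}\rvert+2\lvert w^{-1}_{s_{a+h+1}}\cdots w^{-1}_{s_{a+d-1}}\rvert$; but this still decreases by Lemma~\ref{1-4}(i), the extra term vanishes at $d=h+1$, and your endgame via Lemmas~\ref{A tran= word} and~\ref{8} goes through unchanged.
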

\begin{proof}
	(i) Let $d\in\{1,...,k-1\}$. By Lemma \ref{1-4}(ii), we have  $i_{s_{k-1}}\geq i_{s_d}+2\lvert w_{s_d}\rvert+2$. If $w_{s_k-1}\in W_u$ then $\rho\geq i_{s_k-1}+2\lvert w_{s_k-1}\rvert+2=i_{s_{k-1}}+2\lvert w_{s_{k-1}}\rvert+2$, i.e. $\rho\geq i_{s_d}+2\lvert w_{s_d}\rvert+2$.
	Suppose $w_{s_k-1}\in W_x$. By Lemma \ref{1-4}(ii, iii), we have $i_{s_d}+2\lvert w_{s_d}\rvert+2\leq i_{s_{k-1}}$ and  $i_{s_{k-1}}+2\lvert w_{s_{k-1}}\rvert+2\leq(s_k-1)_u$, respectively. This implies  $(s_k-1)_u	=(s_k)_u-b_{s_k-1}-2 <(s_k)_u-b_{s_k-1}\leq \rho$. Thus,  $i_{s_d}+2\lvert w_{s_d}\rvert+2\leq \rho$. \\
	
	\noindent	(ii) Let $d\in\{k,...,a\}$. Since $s_k<s_{k+1}$, we have $i_{s_k}+2\lvert w_{s_k}\rvert+1<i_{s_{k+1}}$ by Lemma \ref{1-4}(ii). 
	This implies  
	$i_{s_k}+2\lvert w_{s_k}\rvert+2\lvert w_{s_{k+1}}\rvert+1< i_{s_{k+1}}+2\lvert w_{s_{k+1}}\rvert
	< i_{s_{k+1}}+2\lvert w_{s_{k+1}}\rvert+1 
	< i_{s_{k+2}}$.
	After $d-k$ such steps, we have 
	$i_{s_k}+2\lvert w_{s_k}\rvert+2\lvert w_{s_{k+1}}\rvert+...+2\lvert w_{s_{d-1}}\rvert+1<i_{s_d}$. Since $\rho\leq i_{s_k}$, we obtain $\rho+2\lvert w_{s_k}...w_{s_{d-1}}\rvert\leq i_{s_d}.$ \\ 
	
	\noindent	(iii) Let $d\in\{1,...,h\}$ and let $c$ be the greatest $f\in\{0,...,a-k\}$ such that $s_{k+f}<s_{a+h}$. We put $m=s_{k+c}$. By Lemma \ref{lem zu=dz}, we have $m_u=i_{s_{k+c}}$ and $m_x=(m+1)_x-(m+1)_u+i_{s_{k+c}}+2\lvert w_{s_{k+c}}\rvert$. Note that $w_{s_{a+h}}=w_{m+1}\in W_x$. So we have $(m+1)_u=(m+2)_u-(m+2)_x+i_{s_{a+h}}+2\lvert w_{s_{a+h}}^{-1}\rvert$ and $(m+1)_x=i_{s_{a+h}}$. By Lemma \ref{1-4}(iii), we get
	$i_{s_{k+c}}+2\lvert w_{s_{k+c}}\rvert<(m+1)_u=(m+2)_u-(m+2)_x+i_{s_{a+h}}+2\lvert w_{s_{a+h}}^{-1}\rvert$, i.e. 
	\begin{equation}
	i_{s_{k+c}}+2\lvert w_{s_{k+c}}\rvert-(m+2)_u+(m+2)_x<i_{s_{a+h}}+2\lvert w_{s_{a+h}}^{-1}\rvert. 
	\end{equation}   
	If $w_{m+2}\in W_u$ then $w_{m+2}=w_{s_{k+c+1}}$ and (1) provide   
	$i_{s_{k+c}}+2\lvert w_{s_{k+c}}\rvert-i_{s_{k+c+1}}+(m+3)_x-(m+3)_u+i_{s_{k+c+1}}+2\lvert w_{s_{k+c+1}}\rvert<i_{s_{a+h}}+2\lvert w_{s_{a+h}}^{-1}\rvert$, i.e. $i_{s_{k+c}}+2\lvert w_{s_{k+c}}w_{s_{k+c+1}}\rvert+(m+3)_x-(m+3)_u<i_{s_{a+h}}+2\lvert w_{s_{a+h}}^{-1}\rvert$. 
	If $w_{m+2}\in W_x$ then $(1)$ provides $i_{s_{k+c}}+2\lvert w_{s_{k+c}}\rvert+(m+3)_x-(m+3)_u-2\lvert w_{s_{a+h-1}}^{-1}\rvert <i_{s_{a+h}}+2\lvert w_{s_{a+h}}^{-1}\rvert$.
	We repeat this procedure until $w_{a+b}$. 
	If $w_{a+b}\in W_u$ then $a+b=s_a$, i.e. $(a+b)_u=i_{s_a}$ and $(a+b)_x=i_{s_a}+2\lvert w_{s_a}\rvert$.
	If $w_{a+b}\in W_x$ then $a+b=s_{a+1}$, i.e. $(a+b)_u=i_{s_{a+1}}+2\lvert w_{s_{a+1}}^{-1}\rvert$ and $(a+b)_x=i_{s_{a+1}}$.
	This provides $i_{s_{k+c}}+2\lvert w_{s_{k+c}}...w_{s_a}\rvert-2\lvert w_{s_{a+h-1}}^{-1}...w_{s_{a+1}}^{-1}\rvert\leq i_{s_{a+h}}+2\lvert w_{s_{a+h}}^{-1}\rvert$. If $c=0$, then we have the required inequality.
	If $c>0$ then 
	$i_{s_k}+2\lvert w_{s_k}\rvert<i_{s_{k+1}},
	i_{s_k}+2\lvert w_{s_k}\rvert+2\lvert w_{s_{k+1}}\rvert< i_{s_{k+2}}, ... ,
	i_{s_k}+2\lvert w_{s_k}...w_{s_{k+c-1}}\rvert< i_{s_{k+c}}$ and we can conclude $i_{s_{k}}+2\lvert w_{s_{k}}...w_{s_a}\rvert-2\lvert w_{s_{a+h-1}}^{-1}...w_{s_{a+1}}^{-1}\rvert\leq i_{s_{a+h}}+2\lvert w_{s_{a+h}}^{-1}\rvert$. Altogether, we have shown that $\rho+2\lvert w_{s_{k}}...w_{s_a}\rvert-2\lvert w_{s_{a+h-1}}^{-1}...w_{s_{a+1}}^{-1}\rvert\leq i_{s_{a+h}}+2\lvert w_{s_{a+h}}^{-1}\rvert$ since $\rho\leq i_{s_k}$. If $d=h$ then the statement holds. On the other hand, if $d<h$ then we have $i_{s_{a+h}}+2\lvert w_{s_{a+h}}^{-1}\rvert<i_{s_{a+h}}+2\lvert w_{s_{a+h}}^{-1}\rvert+1<i_{s_{a+d}}<i_{s_{a+d}}+2\lvert w_{s_{a+d}}^{-1}\rvert$. This completes the proof. \\
	
	\noindent (iv) Let $d\in\{h+1,...,b\}$. Note that $s_{a+h+1}<a+b$. We put $m=s_{a+h+1}$. Then we obtain $m+1=s_t$, where $t=min\{z\in\{1,...,a\} : s_{a+h+1}<s_z\}$. 
	We have $m_u=(m+1)_u-(m+1)_x+i_{s_{a+h+1}}+2\lvert w_{s_{a+h+1}}^{-1}\rvert$ and $m_x=i_{s_{a+h+1}}$. Because $w_{m+1}=w_{s_t}\in W_u$, we have $(m+1)_u=i_{s_t}$ and $(m+1)_x=(m+2)_x-(m+2)_u+i_{s_t}+2\lvert w_{s_t}\rvert$. 
	We get $m_x+2\lvert w_m\rvert+2\leq (m+1)_x$, i.e.
	\begin{equation}
	i_{s_{a+h+1}}+2\lvert w_{s_{a+h+1}}^{-1}\rvert+2\leq (m+2)_x-(m+2)_u+i_{s_t}+2\lvert w_{s_t}\rvert.
	\end{equation} 
	\indent If $w_{m+2}\in W_u$ then $m+2=s_{t+1}$ and (2) provide $i_{s_{a+h+1}}+2\lvert w_{s_{a+h+1}}^{-1}\rvert+2\leq (m+3)_x-(m+3)_u+i_{s_{t+1}}+2\lvert w_{s_{t+1}}\rvert-i_{s_{t+1}}+2\lvert w_{s_t}\rvert+i_{s_t}=i_{s_t}+2\lvert w_{s_t}\rvert+2\lvert w_{s_{t+1}}\rvert+(m+3)_x-(m+3)_u$. \\
	\indent If $w_{m+2}\in W_x$ then $m+2=s_{a+h}$ and (2) provide $i_{s_{a+h+1}}+2\lvert w_{s_{a+h+1}}^{-1}\rvert+2\leq i_{s_{a+h}}- (m+3)_u+(m+3)_x-i_{s_{a+h}}-2\lvert w_{s_{a+h}}^{-1}\rvert+2\lvert w_{s_t}\rvert+i_{s_t}=i_{s_t}+2\lvert w_{s_t}\rvert-2\lvert w_{s_{a+h}}^{-1}\rvert+(m+3)_x-(m+3)_u$.
	We repeat this procedure until $w_{a+b}$. \\
	\indent If $w_{a+b}\in W_u$ then $a+b=s_a$, i.e. $(a+b)_u=i_{s_a}$ and $(a+b)_x=i_{s_a}+2\lvert w_{s_a}\rvert$. \\
	\indent If $w_{a+b}\in W_x$ then $a+b=s_{a+1}$, i.e. $(a+b)_u=i_{s_{a+1}}+2\lvert w_{s_{a+b}}^{-1}\rvert$ and $(a+b)_x=i_{s_{a+1}}$.
	This provides 
	\begin{equation}
	i_{s_{a+h+1}}+2\lvert w_{s_{a+h+1}}^{-1}\rvert+2\leq i_{s_t}+2\lvert w_{s_t}...w_{s_a}\rvert-2\lvert w_{s_{a+h}}^{-1}...w_{s_{a+1}}^{-1}\rvert.
	\end{equation} 
	We have $i_{s_{a+d}}+2\lvert w_{s_{a+d}}^{-1}\rvert+2\leq i_{s_{a+d-1}}$. This gives $ 
	i_{s_{a+d}}+2\lvert w_{s_{a+d}}^{-1}w_{s_{a+d-1}}^{-1}\rvert+2 \leq  i_{s_{a+d-1}}+2\lvert w_{s_{a+d-1}}^{-1}\rvert+2 \leq i_{s_{a+d-2}}$. After $d-h$ such steps, we will obtain 
	\begin{align}
	i_{s_{a+d}}+2\lvert w_{s_{a+d}}^{-1}...w_{s_{a+h+1}}^{-1}\rvert+2 \leq i_{s_{a+h+1}}+2\lvert w_{s_{a+h+1}}^{-1}\rvert+2.
	\end{align}
	Then (3) and (4) provide (by transitivity) \\
	$i_{s_{a+d}}+2\lvert w_{s_{a+d}}^{-1}\rvert+2\lvert w_{s_{a+d-1}}^{-1}...w_{s_{a+h+1}}^{-1}\rvert \leq i_{s_t}+2\lvert w_{s_t}...w_{s_{a}}\rvert-2\lvert w_{s_{a+h}}^{-1}...w_{s_{a+1}}^{-1}\rvert$   and
	\begin{align}
	i_{s_{a+d}}+2\lvert w_{s_{a+d}}^{-1}\rvert+2 &\leq i_{s_t}+2\lvert w_{s_t}...w_{s_{a}}\rvert-2\lvert w_{s_{a+d-1}}^{-1}...w_{s_{a+1}}^{-1}\rvert 
	\end{align}
	\indent   Suppose $t=k$. If $\rho=(m+1)_u=i_{s_t}$ then the proof is finished by (3). Suppose $\rho<(m+1)_u$. We show that $i_{s_{a+h+1}} +2\lvert w_{s_{a+h+1}}^{-1}\rvert+2\leq (m+1)_u-b_m+2\lvert w_{s_k}...w_{s_a}\rvert-2\lvert w_{s_{a+h}}^{-1}...w_{s_{a+1}}^{-1}\rvert$. 
	We have $b_m = (m+1)_x-(i_{s_{a+h+1}}+2\lvert w_{s_{a+h+1}}^{-1}\rvert+2)$.
	This provides $i_{s_k}-((m+1)_u-b_m) = i_{s_k}+2\lvert w_{s_k}...w_{s_a}\rvert-2\lvert w_{s_{a+h}}^{-1}...w_{s_{a+1}}^{-1}\rvert-(i_{s_{a+h+1}}+2\lvert w_{s_{a+h+1}}^{-1}\rvert+2)$ by Lemmas \ref{lem zu=dz} and \ref{x>u} and since $s_{a+h+1}<s_k<s_{a+h}$. Then   
	$i_{s_{a+h+1}}+2\lvert w_{s_{a+h+1}}^{-1}\rvert+2= (m+1)_u-b_m+2\lvert w_{s_k}...w_{s_a}\rvert -2\lvert w_{s_{a+h}}^{-1}...w_{s_{a+1}}^{-1}\rvert \leq \rho+2\lvert w_{s_k}...w_{s_a}\rvert-2\lvert w_{s_{a+h}}^{-1}...w_{s_{a+1}}^{-1}\rvert$.
	By (4) and transitivity, we get
	$i_{s_{a+d}}+2\lvert w_{s_{a+d}}^{-1}\rvert+2\lvert w_{s_{a+d-1}}^{-1}...w_{s_{a+h+1}}^{-1}\rvert+2 \leq \rho+2\lvert w_{s_k}...w_{s_a}\rvert-2\lvert w_{s_{a+h}}^{-1}...w_{s_{a+1}}^{-1}\rvert$. Thus, $i_{s_{a+d}}+2\lvert w_{s_{a+d}}^{-1}\rvert+2 \leq \rho+2\lvert w_{s_k}...w_{s_a}\rvert-2\lvert w_{s_{a+d-1}}^{-1}...w_{s_{a+1}}^{-1}\rvert$. \\
	\indent  If $k=t+1$ then $\rho\in\{i_{s_t}+2\lvert w_{s_t}\rvert +2,...,i_{s_k}\}$ and by (5), we can conclude that $
	i_{s_{a+d}}+2\lvert w_{s_{a+d}}^{-1}\rvert+2 \leq i_{s_t}+2\lvert w_{s_t}...w_{s_{a}}\rvert-2\lvert w_{s_{a+d-1}}^{-1}...w_{s_{a+1}}^{-1}\rvert< i_{s_t}+2\lvert w_{s_t}\rvert+2+2\lvert w_{s_k}...w_{s_{a}}\rvert-2\lvert w_{s_{a+d-1}}^{-1}...w_{s_{a+1}}^{-1}\rvert \leq \rho+2\lvert w_{s_k}...w_{s_{a}}\rvert-2\lvert w_{s_{a+d-1}}^{-1}...w_{s_{a+1}}^{-1}\rvert$. \\
	\indent  If $k\geq t+2$ then  $\rho\in\{i_{s_{k-1}}+2\lvert w_{s_{k-1}}\rvert+2,...,i_{s_k}\}$ and by Lemma \ref{1-4}(ii), we obtain $
	i_{s_t}+2\lvert w_{s_t}\rvert+2 \leq i_{s_{t+1}}, 
	i_{s_{t+1}}+2\lvert w_{s_{t+1}}\rvert+2\leq i_{s_{t+2}}, ... ,
	i_{s_{k-2}}+2\lvert w_{s_{k-2}}\rvert+2\leq i_{s_{k-1}}$. This implies $i_{s_t}+2\lvert w_{s_t}...w_{s_{k-1}}\rvert\leq i_{s_{k-1}}+2\lvert w_{s_{k-1}}\rvert+2$, i.e. 
	$i_{s_t}+2\lvert w_{s_t}...w_{s_{k-1}}\rvert+2\lvert w_{s_k}...w_{s_a}\rvert\leq i_{s_{k-1}}+2\lvert w_{s_{k-1}}\rvert+2+2\lvert w_{s_k}...w_{s_a}\rvert$ and      $   
	i_{s_t}+2\lvert w_{s_t}...w_{s_a}\rvert-2\lvert w_{s_{a+d-1}}^{-1}...w_{s_{a+1}}^{-1}\rvert \leq i_{s_{k-1}}+2\lvert w_{s_{k-1}}\rvert+2+2\lvert w_{s_k}...w_{s_a}\rvert-2\lvert w_{s_{a+d-1}}^{-1}...w_{s_{a+1}}^{-1}\rvert
	\leq\rho+2\lvert w_{s_k}...w_{s_a}\rvert-2\lvert w_{s_{a+d-1}}^{-1}...w_{s_{a+1}}^{-1}\rvert$.  
	By (5) and transitivity, we get $i_{s_{a+d}}+2\lvert w_{s_{a+d}}^{-1}\rvert+2\leq \rho+2\lvert w_{s_k}...w_{s_{a}}\rvert-2\lvert w_{s_{a+d-1}}^{-1}...w_{s_{a+1}}^{-1}\rvert$. \\
	
	\noindent  (v) It can be proved similar to (iv).
\end{proof}

Now we are able to calculate $\rho\overline{w}_\alpha$ for all $\rho$ in the domain of $\overline{w}_\alpha$.
\begin{prop} \label{9}
	Let $k\in\{2,...,a\}$ with $w_{s_k-1}\in W_u$ and let $\hat{\rho}\in\{0,...,a_{s_k-1}\}$ such that $d_{r_{s_k}}-\hat{\rho}\in dom (\alpha)$. We have $(d_{r_{s_k}}-\hat{\rho})\overline{w}_\alpha=(d_{r_{s_k}}-\hat{\rho})\alpha$.
\end{prop}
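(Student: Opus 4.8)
The plan is to prove $(d_{r_{s_k}}-\hat\rho)\,\overline{w}_\alpha=(d_{r_{s_k}}-\hat\rho)\,\alpha$ by following the point $\rho:=d_{r_{s_k}}-\hat\rho$ through the factors of $\overline{w}_\alpha=\overline{v}_A\,\overline{w}_{s_1}\cdots\overline{w}_{s_a}\,\overline{w_{s_{a+1}}^{-1}}\cdots\overline{w_{s_{a+b}}^{-1}}$, using Remark \ref{move} to describe each factor and Lemma \ref{tool} to decide, at each stage, whether the running image is shifted or left fixed. First I would check that this $\rho$ is one of the elements for which Lemma \ref{tool} is stated. By Lemma \ref{lem zu=dz} we have $d_{r_{s_k}}=(s_k)_u=i_{s_k}$ because $w_{s_k}\in W_u$; since $w_{s_k-1}\in W_u$ and $s_k-1<s_k$, the index $s_k-1$ is the $u$-index immediately below $s_k$, i.e. $s_k-1=s_{k-1}$; and then the definition of $a_{s_k-1}$ gives $a_{s_k-1}=(s_k)_u-i_{s_k-1}-2j_{s_k-1}-2=i_{s_k}-(i_{s_k-1}+2\lvert w_{s_k-1}\rvert+2)$. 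Hence $\rho=i_{s_k}-\hat\rho$ runs exactly over $\{i_{s_k-1}+2\lvert w_{s_k-1}\rvert+2,\dots,i_{s_k}\}$, which is precisely the hypothesis of Lemma \ref{tool} in the case $w_{s_k-1}\in W_u$, so parts (i)--(v) of that lemma are available for this $\rho$.

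Next I would compute $\rho\alpha$. Writing $\rho=d_j$ (legitimate since $\rho\in dom(\alpha)$), from $w_{s_{k-1}}\in W_u$ one has $d_{r_{s_{k-1}}+1}-d_{r_{s_{k-1}}}>m_{r_{s_{k-1}}+1}-m_{r_{s_{k-1}}}\ge 2$ (the last inequality by Proposition \ref{4 choice}(iii)), and together with Lemma \ref{A tran= word} this yields $d_{r_{s_{k-1}}}<i_{s_k-1}+2\lvert w_{s_k-1}\rvert+2\le\rho\le i_{s_k}=d_{r_{s_k}}$, so $r_{s_{k-1}}<j\le r_{s_k}$. Because $s_k=s_{k-1}+1$, no $r_i$ lies strictly between $r_{s_{k-1}}$ and $r_{s_k}$, so condition (iii) in the construction of $w_\alpha$ gives $d_{i+1}-d_i=m_{i+1}-m_i$ for all $i\in\{r_{s_{k-1}}+1,\dots,r_{s_k}-1\}$; summing from $i=j$ to $i=r_{s_k}-1$ gives $d_{r_{s_k}}-d_j=m_{r_{s_k}}-m_j$, that is, $\rho\alpha=m_j=\rho+(m_{r_{s_k}}-d_{r_{s_k}})$.

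It remains to trace $\rho$ through $\overline{w}_\alpha$ and see that the net shift is exactly $m_{r_{s_k}}-d_{r_{s_k}}$. Put $h=\max\{z:s_k<s_{a+z}\}$ if this set is non-empty and $h=0$ otherwise; by Lemma \ref{x>u} (with the greatest $d$ such that $s_{a+d}>s_k$) we get $m_{r_{s_k}}-d_{r_{s_k}}=2\lvert w_{s_k}\cdots w_{s_a}\rvert-2\lvert w_{s_{a+1}}^{-1}\cdots w_{s_{a+h}}^{-1}\rvert$ (an empty last term when $h=0$), and moreover, for $1\le d\le h$, the integer $\rho+2\lvert w_{s_k}\cdots w_{s_a}\rvert-2\lvert w_{s_{a+1}}^{-1}\cdots w_{s_{a+d-1}}^{-1}\rvert$ equals $m_j+2\lvert w_{s_{a+d}}^{-1}\cdots w_{s_{a+h}}^{-1}\rvert\ge m_j+2j_{s_{a+d}}\ge 2j_{s_{a+d}}+1$, which is the lower bound needed to lie in the domain described in Remark \ref{move}(iv). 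Now $\rho\notin A$ (as $\rho\in dom(\alpha)$), so $\rho\overline{v}_A=\rho$; by Lemma \ref{tool}(i) and Remark \ref{move}(ii) each of $\overline{w}_{s_1},\dots,\overline{w}_{s_{k-1}}$ fixes $\rho$; since $\rho\le i_{s_k}$, $\overline{w}_{s_k}$ sends $\rho$ to $\rho+2\lvert w_{s_k}\rvert$; by Lemma \ref{tool}(ii) and Remark \ref{move}(ii) each $\overline{w}_{s_d}$ with $k<d\le a$ adds $2\lvert w_{s_d}\rvert$, so after $\overline{w}_{s_1}\cdots\overline{w}_{s_a}$ the image of $\rho$ is $\rho+2\lvert w_{s_k}\cdots w_{s_a}\rvert$; then, by Lemma \ref{tool}(iii) together with the lower bound just noted and Remark \ref{move}(iv), each of $\overline{w_{s_{a+1}}^{-1}},\dots,\overline{w_{s_{a+h}}^{-1}}$ subtracts $2\lvert w_{s_{a+d}}^{-1}\rvert$, while by Lemma \ref{tool}(iv) and Remark \ref{move}(iv) each of $\overline{w_{s_{a+h+1}}^{-1}},\dots,\overline{w_{s_{a+b}}^{-1}}$ fixes the current value. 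Hence $\rho\overline{w}_\alpha=\rho+2\lvert w_{s_k}\cdots w_{s_a}\rvert-2\lvert w_{s_{a+1}}^{-1}\cdots w_{s_{a+h}}^{-1}\rvert=\rho+(m_{r_{s_k}}-d_{r_{s_k}})=\rho\alpha$.

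The part I expect to be the main obstacle is the bookkeeping in the last step: parts (iii)--(v) of Lemma \ref{tool} each branch into several cases according to the relative order of the $u$- and $x$-indices, and to invoke Remark \ref{move} legitimately one must confirm at every elementary step that the running image is really in the domain of the next map and falls on the correct side of the ``gap'' at $i+2j+1$. Getting the target value and all of these domain checks to line up hinges on the observation in the second step that $\rho$ sits in the rigid block of $\alpha$ delimited by the consecutive break points $r_{s_{k-1}}$ and $r_{s_k}$.
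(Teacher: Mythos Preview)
Your proof is correct and follows essentially the same route as the paper: place $\rho=i_{s_k}-\hat\rho$ in the interval required by Lemma~\ref{tool}, compute $\rho\alpha=m_{r_{s_k}}-\hat\rho$ (the paper does this by evaluating $a_{s_k-1}$ via Lemma~\ref{A tran= word}, you equivalently via the rigid block between the consecutive break points $r_{s_{k-1}}$ and $r_{s_k}$), and then trace $\rho$ through $\overline{v}_A\overline{w}_{s_1}\cdots\overline{w}_{s_a}\overline{w_{s_{a+1}}^{-1}}\cdots\overline{w_{s_{a+b}}^{-1}}$ using Remark~\ref{move} together with Lemma~\ref{tool}(i)--(iv) and Lemma~\ref{x>u}. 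The one cosmetic point is that in your uniform treatment the case $h=0$ (no $s_{a+z}>s_k$) should appeal to Lemma~\ref{tool}(v) rather than (iv), exactly as the paper does when it separates off the sub-case $s_{a+1}<s_k$; conversely, your explicit verification of the lower domain bound $2j_{s_{a+d}}+1$ from Remark~\ref{move}(iv) is a detail the paper leaves implicit.
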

\begin{proof}
	We have $(s_k)_u=i_{s_k}=d_{r_{s_k}}$ by Lemma \ref{lem zu=dz}. It is easy to verify that  $d_{r_{s_k}}-\hat{\rho}\notin A$ by the definition of set $A$ and we  observe that $d_{r_{s_k}}-\hat{\rho}\in \{i_{s_k-1}+2j_{s_k-1}+2,...,i_{s_k}\}$. Moreover, we have 
	$a_{s_k-1}\\
	=(s_k)_u-i_{s_k-1}-2j_{s_k-1}-2 \\   
	= (s_k)_u-(d_{r_{q}}-m_{r_{q}}+m_{r_{q-1}}+2)$ \indent  (by Lemma \ref{A tran= word})     \\
	$	= d_{r_q}-(d_{r_{q}}-m_{r_{q}}+m_{r_{q-1}}+2)$  \quad \quad \ \ (by Lemma \ref{lem zu=dz}) \\
	$=m_{r_q}-m_{r_{q-1}}-2.$
	Thus, $\hat{\rho}\in\{0,...,a_{s_k-1}\}=\{0,...,m_{r_q}-m_{r_{q-1}}-2\}$. 
	This implies $(d_{r_{s_k}}-\hat{\rho})\alpha=m_{r_{s_k}}-\hat{\rho}$. We will show $(d_{r_{s_k}}-\hat{\rho})\overline{w}_\alpha=m_{r_{s_k}}-\hat{\rho}$. We have \\
	$ (d_{r_{s_k}}-\hat{\rho})\overline{v}_A\overline{w}_{s_1}...\overline{w}_{s_a}\overline{w_{s_{a+1}}^{-1}}...\overline{w_{s_{a+b}}^{-1}}$ \\ = $(d_{r_{s_k}}-\hat{\rho})\overline{w}_{s_1}...\overline{w}_{s_a}\overline{w_{s_{a+1}}^{-1}}...\overline{w_{s_{a+b}}^{-1}}$ \quad \ \quad (since $d_{r_{s_k}}-\hat{\rho}\notin A$) \\
	$=(d_{r_{s_k}}-\hat{\rho})\overline{w}_{s_k}...\overline{w}_{s_a}\overline{w_{s_{a+1}}^{-1}}...\overline{w_{s_{a+b}}^{-1}}$ \quad\  \quad (by Remark \ref{move} and Lemma \ref{tool}(i)) \\
	$= (i_{s_k}-\hat{\rho}+2\lvert{w}_{s_k}...{w}_{s_a}\rvert)\overline{w_{s_{a+1}}^{-1}}...\overline{w_{s_{a+b}}^{-1}} \quad (\mbox{by Remark \ref{move} and Lemma \ref{tool}(ii)}$). \\
	If $s_{a+1}<s_k$ then $\rho+2\lvert{w}_{s_k}...{w}_{s_a}\rvert\geq i_{s_{a+1}}+2\lvert w_{s_{a+1}}^{-1}\rvert+2$  by Lemma \ref{tool}(v). 
	So, we obtain
	$(i_{s_k}-\hat{\rho}+2\lvert{w}_{s_k}...{w}_{s_a}\rvert)\overline{w_{s_{a+1}}^{-1}}...\overline{w_{s_{a+b}}^{-1}} \\ =i_{s_k}-\hat{\rho}+2\lvert{w}_{s_k}...{w}_{s_a}\rvert$ \quad (by Remark \ref{move} and Lemma \ref{tool}(v)) \\ 
	$=m_{r_{s_k}}-\hat{\rho}$ \indent \qquad \ \indent \ (by Lemma \ref{x>u}). \\
	If $\overline{w}_\alpha = \overline{w}_{s_1}...\overline{w}_{s_a}$ then we obtain $(d_{r_{s_k}}-\hat{\rho})\overline{w}_\alpha= m_{r_{s_k}}-\hat{\rho}$ by similar arguments. \\
	\indent Suppose now there is the greatest $h\in\{1,...,b\}$ such that $s_{a+h}>s_k$. 
	For $h=b$, Remark \ref{move} and Lemma \ref{x>u} and \ref{tool}(iii) provide $(i_{s_k}-\hat{\rho}+2\lvert {w}_{s_k}...{w}_{s_a}\rvert)\overline{w_{s_{a+1}}^{-1}}...\overline{w_{s_{a+b}}^{-1}}=i_{s_k}-\hat{\rho}+2\lvert w_{s_k}...w_{s_a}\rvert-2\lvert {w}_{s_{a+1}}^{-1}...{w}_{s_{a+b}}^{-1}\rvert=m_{r_{s_k}}-\hat{\rho}$. It remains the case  $h\in\{1,...,b-1\}$. Here, we have $(i_{s_k}-\hat{\rho}+2\lvert {w}_{s_k}...{w}_{s_a}\rvert)\overline{w_{s_{a+1}}^{-1}}...\overline{w_{s_{a+b}}^{-1}}=(i_{s_k}-\hat{\rho}+2\lvert w_{s_k}...w_{s_a}\rvert-2\lvert {w}_{s_{a+1}}^{-1}...{w}_{s_{a+h}}^{-1}\rvert)\overline{w_{s_{a+h+1}}^{-1}}...\overline{w_{s_{a+b}}^{-1}}$ by Remark \ref{move} and Lemma \ref{tool}(iii). Since $s_{a+h+1}<s_k$ by Remark \ref{move} and Lemma \ref{x>u} and \ref{tool}(iv), we obtain $ (i_{s_k}-\hat{\rho}+2\lvert w_{s_k}...w_{s_a}\rvert-2\lvert {w}_{s_{a+1}}^{-1}...{w}_{s_{a+h}}^{-1}\rvert)\overline{w_{s_{a+h+1}}^{-1}}...\overline{w_{s_{a+b}}^{-1}}=i_{s_k}-\hat{\rho}+2\lvert w_{s_k}...w_{s_a}\rvert-2\lvert {w}_{s_{a+1}}^{-1}...{w}_{s_{a+h}}^{-1}\rvert= m_{r_{s_k}}-\hat{\rho}$.    
\end{proof}

\begin{prop} \label{10}
	Let $k\in\{1,...,a\}$ with $w_{s_k-1}\in W_x$ and let $\hat{\rho}\in\{0,...,b_{s_k-1}\}$ such that $d_{r_{s_k}}-\hat{\rho}\in dom(\alpha)$. We have $(d_{r_{s_k}}-\hat{\rho})\overline{w}_\alpha=(d_{r_{s_k}}-\hat{\rho})\alpha$.
\end{prop}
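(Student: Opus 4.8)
The plan is to mirror exactly the proof of Proposition \ref{9}, replacing the role of the preceding $W_u$-subword $w_{s_k-1}$ by a $W_x$-subword, and using Lemma \ref{8} in place of Lemma \ref{A tran= word}. First I would observe that $(s_k)_u=i_{s_k}=d_{r_{s_k}}$ by Lemma \ref{lem zu=dz}, and that $d_{r_{s_k}}-\hat{\rho}\notin A$ by the construction of $A$; hence $(d_{r_{s_k}}-\hat{\rho})\overline{v}_A=d_{r_{s_k}}-\hat{\rho}$. Next I would identify the range in which $d_{r_{s_k}}-\hat{\rho}$ lies: since $w_{s_k-1}\in W_x$, the definition of $b_{s_k-1}$ gives $d_{r_{s_k}}-\hat{\rho}\in\{i_{s_k}-b_{s_k-1},\dots,i_{s_k}\}$, which is precisely the range hypothesized in Lemma \ref{tool} for the case $w_{s_k-1}\in W_x$. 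I would then rewrite $b_{s_k-1}$ via Lemma \ref{8}: writing $s_k-1=s_{a+d}$ for the appropriate $d$, we have $({s_{a+d}}+1)_u-b_{s_{a+d}}=d_{r_{s_{a+d}}}+2$, and combining with Lemma \ref{lem zu=dz} this should translate $\hat{\rho}\in\{0,\dots,b_{s_k-1}\}$ into a statement about $m$-indices, so that $(d_{r_{s_k}}-\hat{\rho})\alpha=m_{r_{s_k}}-\hat{\rho}$.

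Having fixed the range, I would then run the same chain of equalities as in Proposition \ref{9}. Applying $\overline{v}_A$ does nothing since $d_{r_{s_k}}-\hat{\rho}\notin A$. Applying $\overline{w}_{s_1}\cdots\overline{w}_{s_{k-1}}$ does nothing either: by Lemma \ref{tool}(i) the element is already $\geq i_{s_d}+2|w_{s_d}|+2$ for each $d<k$, so each of these $W_u$-transformations acts identically (Remark \ref{move}(ii)). Applying $\overline{w}_{s_k}\cdots\overline{w}_{s_a}$ then adds $2|w_{s_k}\cdots w_{s_a}|$, since Lemma \ref{tool}(ii) guarantees the running value stays $\leq i_{s_d}$ at each stage. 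This yields the intermediate value $i_{s_k}-\hat{\rho}+2|w_{s_k}\cdots w_{s_a}|$.

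It remains to apply the inverse $W_x$-transformations $\overline{w_{s_{a+1}}^{-1}},\dots,\overline{w_{s_{a+b}}^{-1}}$. Here I would split into the same three cases as in Proposition \ref{9}: first, if $\overline{w}_\alpha=\overline{w}_{s_1}\cdots\overline{w}_{s_a}$ (no $W_x$-part) or if $s_{a+1}<s_k$, then by Lemma \ref{tool}(v) and Remark \ref{move}(iv) these transformations act identically, and Lemma \ref{x>u} gives $i_{s_k}+2|w_{s_k}\cdots w_{s_a}|=m_{r_{s_k}}$, so the value is $m_{r_{s_k}}-\hat{\rho}$ as required. Second, if there is a greatest $h\in\{1,\dots,b\}$ with $s_{a+h}>s_k$ and $h=b$, then Lemma \ref{tool}(iii) shows each $\overline{w_{s_{a+d}}^{-1}}$ subtracts $2|w_{s_{a+d}}^{-1}|$, and Lemma \ref{x>u} again closes the computation. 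Third, if $h\in\{1,\dots,b-1\}$, then the first $h$ inverse transformations subtract (by Lemma \ref{tool}(iii)) while the remaining ones act identically (since $s_{a+h+1}<s_k$, by Lemma \ref{tool}(iv) and Remark \ref{move}(iv)), and once more Lemma \ref{x>u} yields $m_{r_{s_k}}-\hat{\rho}$.

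I expect the only genuinely new point — and hence the main obstacle — to be the bookkeeping in the first paragraph: correctly expressing $b_{s_k-1}$ through Lemma \ref{8} and checking that the resulting range $\{i_{s_k}-b_{s_k-1},\dots,i_{s_k}\}$ is exactly the one covered by Lemma \ref{tool}, so that $\hat{\rho}$ translates cleanly into $m_{r_{s_k}}-\hat{\rho}=(d_{r_{s_k}}-\hat{\rho})\alpha$. Once that identification is in place, the rest is a verbatim repetition of the case analysis in Proposition \ref{9}, invoking Remark \ref{move}, Lemma \ref{tool}(i)--(v), and Lemma \ref{x>u} exactly as there.
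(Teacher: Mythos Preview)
Your proposal is correct and follows essentially the same route as the paper's proof. The only minor differences are that the paper translates $b_{s_k-1}$ into $d$-indices (namely $b_{s_k-1}=d_{r_{s_k}}-d_{r_{s_k-1}}-2$ via Lemmas~\ref{lem zu=dz} and~\ref{8}) rather than $m$-indices, and that the paper exploits the hypothesis $w_{s_k-1}=w_{s_{a+h}}$ for a specific $h\in\{1,\dots,b\}$ to give a cleaner two-case split ($h>1$ versus $h=1$), whereas your verbatim three-case split imported from Proposition~\ref{9} contains one vacuous case (there is always some $s_{a+d}<s_k$, so the ``$h=b$'' case cannot occur).
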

\begin{proof}
	There is $h\in\{1,...,b\}$ with $w_{s_k-1}=w_{s_{a+h}}$. By Lemma \ref{lem zu=dz}, we have $i_{s_k}=(s_k)_u=d_{r_{s_k}}$. Since $d_{r_{s_k}}-\hat{\rho}\in dom (\alpha)$, we can conclude $d_{r_{s_k}}-\hat{\rho}\notin A$ by the definition of set $A$. We observe that $d_{r_{s_k}}-\hat{\rho}\in \{(s_k)_u-b_{s_k-1},...,i_{s_k}\}$. Moreover, we have 
	$b_{s_k-1}=(s_k)_u-((s_k)_u-b_{s_k-1})
	= d_{r_{s_k}}-(d_{r_{s_k-1}}+2)$  by Lemmas \ref{lem zu=dz} and \ref{8}.
	Thus, $\hat{\rho}\in\{0,...,b_{s_k-1}\}=\{0,...,d_{r_{s_k}}-(d_{r_{s_k-1}}+2)\}$. 
	This implies $(d_{r_{s_k}}-\hat{\rho})\alpha=m_{r_{s_k}}-\hat{\rho}$.  We will show $(d_{r_{s_k}}-\hat{\rho})\overline{w}_\alpha=m_{r_{s_k}}-\hat{\rho}$. We have 
	$ (d_{r_{s_k}}-\hat{\rho})\overline{v}_A\overline{w}_{s_1}...\overline{w}_{s_a}\overline{w_{s_{a+1}}^{-1}}...\overline{w_{s_{a+b}}^{-1}}$  \\ = $(d_{r_{s_k}}-\hat{\rho})\overline{w}_{s_1}...\overline{w}_{s_a}\overline{w_{s_{a+1}}^{-1}}...\overline{w_{s_{a+b}}^{-1}}$ \quad \quad \quad \indent (since $d_{r_{s_k}}-\hat{\rho}\notin A$) \\
	$=(d_{r_{s_k}}-\hat{\rho})\overline{w}_{s_k}...\overline{w}_{s_a}\overline{w_{s_{a+1}}^{-1}}...\overline{w_{s_{a+b}}^{-1}}$ \quad \quad \quad \indent (by Remark \ref{move} and Lemma \ref{tool}(i)) \\
	$= (i_{s_k}-\hat{\rho}+2\lvert {w}_{s_k}...{w}_{s_a}\rvert)\overline{w_{s_{a+1}}^{-1}}...\overline{w_{s_{a+b}}^{-1}}$ \quad \quad \quad (by Remark \ref{move} and Lemma \ref{tool}(ii)). \\
	Suppose $h>1$. Then 
	$(i_{s_k}-\hat{\rho}+2\lvert{w}_{s_k}...{w}_{s_a}\rvert)\overline{w_{s_{a+1}}^{-1}}...\overline{w_{s_{a+b}}^{-1}}$ \\
	$=(i_{s_k}-\hat{\rho}+2\lvert w_{s_k}...w_{s_a}\rvert-2\lvert{w}_{s_{a+1}}^{-1}...{w}_{s_{a+h-1}}^{-1}\rvert)\overline{w_{s_{a+h}}^{-1}}...\overline{w_{s_{a+b}}^{-1}}$   (by Remark \ref{move} and Lemma \ref{tool}(iii)) \\ $=i_{s_k}-\hat{\rho}+2\lvert w_{s_k}...w_{s_a}\rvert-2\lvert{w}_{s_{a+1}}^{-1}...{w}_{s_{a+h-1}}^{-1}\rvert$     \ \indent \indent \indent \ \ (by Remark \ref{move} and Lemma \ref{tool}(iv)) \\
	$	= m_{r_{s_k}}-\hat{\rho}$   \ \quad \quad \indent \indent \indent \indent \indent \indent \indent \indent \indent \indent   (by Lemma \ref{x>u}). \\ 
	If $h=1$ then  $(i_{s_k}-\hat{\rho}+2\lvert{w}_{s_k}...{w}_{s_a}\rvert)\overline{w_{s_{a+1}}^{-1}}...\overline{w_{s_{a+b}}^{-1}}=i_{s_k}-\hat{\rho}+2\lvert{w}_{s_k}...{w}_{s_a}\rvert= m_{r_{s_k}}-\hat{\rho}$ by Remark \ref{move}, Lemmas \ref{x>u} and \ref{tool}(v).
\end{proof} 
It is clear that $\alpha^{-1}=\bigl(\begin{smallmatrix}
m_1 & < & \cdots & < & m_p \\
d_1 &  &  \cdots &    & d_p
\end{smallmatrix}\bigr)$. We consider the word $w_{\alpha^{-1}}^*$ which is defined in the same way as the word $w_\alpha^*$. We  can conclude that $\overline{w_{\alpha^{-1}}^*}$  is the inverse transformation of $\overline{w_\alpha^*}$.
\begin{lem}\label{w alpha} We have $\overline{w_{\alpha^{-1}}^*}=(\overline{w_{\alpha}^*})^{-1}.$
\end{lem}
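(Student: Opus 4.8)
The plan is to read off both $\overline{w_{\alpha}^*}$ and $\overline{w_{\alpha^{-1}}^*}$ as ordered compositions of the elementary partial injections described in Remark~\ref{move}, and to match them factor by factor. First I would note that $\alpha^{-1}\in IOF_n^{par}\setminus(Id_{\overline n}\cup\{\varepsilon\})$ whenever $\alpha$ is, so the construction used to build $w_\alpha^*$ applies verbatim to $\alpha^{-1}$, whose domain is $m_1<\cdots<m_p$ with images $d_1,\dots,d_p$; write $w'_1,\dots,w'_{l+1}$ for the subwords it produces and $w_{\alpha^{-1}}^*$ for the resulting word. The first observation is that conditions (i)--(iii) singling out $\{r_1,\dots,r_l\}\subseteq\{1,\dots,p-1\}$, as well as the alternative ``$d_p=m_p$'', are invariant under interchanging the two rows of $\alpha$; hence $l$, the set $\{r_1,\dots,r_l\}$ and $r_{l+1}=p$ are the same for $\alpha$ and $\alpha^{-1}$.

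The second step is a block-by-block correspondence. Comparing the defining case distinctions for $w_k$ and $w'_k$ and using that the length parameter $j_k$ (half the absolute difference of the relevant gap-lengths, respectively half $\lvert d_p-m_p\rvert$ when $k=l+1$) is unchanged by the row swap, I would verify directly that for every $k\in\{1,\dots,l+1\}$ one has $w_k=u_{i,j}\in W_u$ if and only if $w'_k=x_{i,j}\in W_x$, and $w_k=x_{i,j}\in W_x$ if and only if $w'_k=u_{i,j}\in W_u$, with the \emph{same} integers $i$ and $j$; here one only needs that the subscript of a $u$-block is its domain endpoint and that of an $x$-block its image endpoint, and that these endpoints swap roles together with the two rows. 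Consequently the old $W_u$-positions $s_1<\cdots<s_a$ become precisely the new $W_x$-positions, and the old $W_x$-positions $s_{a+1}>\cdots>s_{a+b}$ become precisely the new $W_u$-positions; in particular $\alpha^{-1}$ contributes $b$ blocks from $W_u$ and $a$ from $W_x$.

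The third ingredient is the transformation identity $(\overline{u_{i,j}})^{-1}=\overline{x_{i,j}^{-1}}$, equivalently $(\overline{x_{i,j}^{-1}})^{-1}=\overline{u_{i,j}}$, which is immediate from Remark~\ref{move}(ii),(iv): these two partial injections have domains $\{1,\dots,i\}\cup\{i+2j+2,\dots,n\}$ and $\{2j+1,\dots,i+2j\}\cup\{i+2j+2,\dots,n\}$, act by $\rho\mapsto\rho+2j$ and $\rho\mapsto\rho-2j$ on their first pieces, and are the identity on the common tail. I would then expand $\overline{w_\alpha^*}=\overline{w_{s_1}}\cdots\overline{w_{s_a}}\,\overline{w_{s_{a+1}}^{-1}}\cdots\overline{w_{s_{a+b}}^{-1}}$, invert it (reversing the order of the factors and inverting each), and rewrite each factor: $(\overline{w_{s_{a+d}}^{-1}})^{-1}=(\overline{x_{i,j}^{-1}})^{-1}=\overline{u_{i,j}}=\overline{w'_{s_{a+d}}}$ for the $W_x$-blocks, and $(\overline{w_{s_d}})^{-1}=(\overline{u_{i,j}})^{-1}=\overline{x_{i,j}^{-1}}=\overline{(w'_{s_d})^{-1}}$ for the $W_u$-blocks. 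This gives $(\overline{w_\alpha^*})^{-1}=\overline{w'_{s_{a+b}}}\cdots\overline{w'_{s_{a+1}}}\,\overline{(w'_{s_a})^{-1}}\cdots\overline{(w'_{s_1})^{-1}}$, where the new $W_u$-blocks occur in the increasing index order $s_{a+b}<\cdots<s_{a+1}$ and the new $W_x$-blocks occur inverted in the decreasing index order $s_a>\cdots>s_1$ --- exactly the shape prescribed by the definition of $w_{(\cdot)}^*$. Hence the right-hand side is $\overline{w_{\alpha^{-1}}^*}$, and the lemma follows; the degenerate case $d_p=m_p$, where $w_{l+1}=w'_{l+1}=\epsilon$ and $a+b=l$, requires no separate treatment.

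The only real difficulty is the bookkeeping: one has to be careful that $w^{-1}$ denotes reversal of the letter string (so that $\overline{w_{s_{a+d}}^{-1}}$ is the composition $\overline{x}_{i+2j-2}\cdots\overline{x}_i$ of Remark~\ref{move}(iv), and not $\overline{x_{i,j}}$, which behaves quite differently), that the $u$- and $x$-subscripts track the correct endpoints under the row swap, and that the two index orderings produced by inverting $w_\alpha^*$ are indeed the increasing and decreasing orders demanded by the construction of $w_{\alpha^{-1}}^*$.
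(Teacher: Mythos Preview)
Your proposal is correct and follows essentially the same approach as the paper's own proof: both identify that $l$ and the indices $r_1,\dots,r_{l+1}$ are unchanged under the row swap, verify block by block that $w_k=u_{i_k,j_k}$ becomes $w'_k=x_{i_k,j_k}$ (and vice versa) with the same parameters, and then use the identity $(\overline{u}_{i,j})^{-1}=\overline{x_{i,j}^{-1}}$ to match $(\overline{w_\alpha^*})^{-1}$ factor by factor with $\overline{w_{\alpha^{-1}}^*}$. Your treatment is slightly more explicit than the paper's about why the resulting factor order is precisely the one demanded by the definition of $w_{(\cdot)}^*$, but the argument is the same.
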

\begin{proof}
	Let $\alpha^{-1}=\bigl(\begin{smallmatrix}
	m_1 & < & \cdots & < & m_p \\
	d_1 &  &  \cdots &    & d_p
	\end{smallmatrix}\bigr)=\bigl(\begin{smallmatrix}
	d'_1 & < & \cdots & < & d'_p \\
	m'_1 &  &  \cdots &    & m'_p
	\end{smallmatrix}\bigr)$. Then we define $r'_1,...,r'_{l'+1}\in\{1,...,p\}$ and  $w'_1,...,w'_{l'},w'_{l'+1}$ like in the construction of the word $w_\alpha^*$. It is easy to verify that $l=l'$ and $r_q=r'_{q}$ for all $q\in\{1,...,l+1\}$. Let $q\in\{1,...,l\}$ and suppose $w_q=x_{i_q,j_q}$. We have  $w_q=x_{m_{r_q},\frac{(m_{r_q+1}-m_{r_q})-(d_{r_q+1}-d_{r_q})}{2}}$ and $m_{r_q+1}-m_{r_q}>d_{r_q+1}-d_{r_q} $, i.e. $d'_{r_q+1}-d'_{r_q}>m'_{r_q+1}-m'_{r_q}$. This implies  $w'_q=u_{d'_{r_q},\frac{(d'_{r_q+1}-d'_{r_q})-(m'_{r_q+1}-m'_{r_q})}{2}} =u_{m_{r_q},\frac{(m_{r_q+1}-m_{r_q})-(d_{r_q+1}-d_{r_q})}{2}}=u_{i_q,j_q}$. For $w_q=u_{i_q,j_q}$, we obtain $w'_q=x_{i_q,j_q}$ dually. If $w_{l+1}$ is not the empty word, i.e. $w_{l+1}=x_{i_{l+1},j_{l+1}}$ or $w_{l+1}=u_{i_{l+1},j_{l+1}}$, then we obtain $w'_{l+1}=u_{i_{l+1},j_{l+1}}$ and $w'_{l+1}=x_{i_{l+1},j_{l+1}}$, respectively by  similar arguments. Thus, we get $w_{\alpha^{-1}}^*=u_{i_{s_{a+b}},j_{s_{a+b}}}... u_{i_{s_{a+1}},j_{s_{a+1}}}x^{-1}_{i_{s_{a}},j_{s_{a}}}...x^{-1}_{i_{s_{1}},j_{s_{1}}}$. We have	$\overline{w_{\alpha^{-1}}^*}= \overline{u}_{i_{s_{a+b}},j_{s_{a+b}}}...\overline{u}_{i_{s_{a+1}},j_{s_{a+1}}}\overline{x_{i_{s_{a}},j_{s_{a}}}^{-1}}...\overline{x_{i_{s_{1}},j_{s_{1}}}^{-1}}  =  (\overline{x_{i_{s_{a+b}},j_{s_{a+b}}}^{-1}})^{-1}...(\overline{x_{i_{s_{a+1}},j_{s_{a+1}}}^{-1}})^{-1} (\overline{u}_{i_{s_{a}},j_{s_{a}}})^{-1}...(\overline{u}_{i_{s_{1}},j_{s_{1}}})^{-1}\\ =  (\overline{u}_{i_{s_{1}},j_{s_{1}}}...\overline{u}_{i_{s_{a}},j_{s_{a}}}\overline{x_{i_{s_{a+1}},j_{s_{a+1}}}^{-1}}...\overline{x_{i_{s_{a+b}},j_{s_{a+b}}}^{-1}})^{-1} =(\overline{{w}}_{s_1}...\overline{{w}}_{s_a}{\overline{w_{s_{a+1}}^{-1}}}...{\overline{w_{s_{a+b}}^{-1}}} )^{-1}= (\overline{w_{\alpha}^*})^{-1}.$ 	
\end{proof}
We have proved the previous lemmas and propositions for a fixed but arbitrary transformation $\alpha$. So, we can replace $\alpha$ by $\alpha^{-1}$ and obtain the same results if we adapt the concepts defined for $\alpha$ to $\alpha^{-1}$. But this is 
straight forward in the case $\alpha^{-1}$. Using this idea, the following proposition is easy to prove.
\begin{prop} \label{12} Let  $h\in\{1,...,b\}$ with $s_{a+h}\geq 2$ and let 
	\begin{align*}
	\hat{\rho} \in \left\{ \begin{array}{rcl}
	\{0,...,a_{s_{a+h}-1}\} & \mbox{if} \ w_{s_{a+h}-1}\in W_u; \\
	\{0,...,b_{s_{a+h}-1}\} & \mbox{if} \ w_{s_{a+h}-1}\in W_x .
	\end{array}\right. 
	\end{align*}
	Then $(d_{r_{s_{a+h}}}-\hat{\rho})\overline{w}_\alpha=m_{r_{s_{a+h}}}-\hat{\rho}=(d_{r_{s_{a+h}}}-\hat{\rho})\alpha$.
\end{prop}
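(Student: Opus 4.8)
The plan is to derive the statement from Propositions \ref{9} and \ref{10} applied to $\alpha^{-1}$ in place of $\alpha$, exploiting the correspondence prepared in Lemma \ref{w alpha}. First I would record how the data of $\alpha^{-1}$ mirrors that of $\alpha$: from the proof of Lemma \ref{w alpha} one has $l'=l$, $r'_q=r_q$, $d'_q=m_q$, $m'_q=d_q$, and $w'_q$ is obtained from $w_q$ by interchanging the types $W_u$ and $W_x$ while keeping the indices $i_q,j_q$. Consequently the parameters $a$ and $b$ are swapped, i.e.\ $a'=b$ and $b'=a$, and the increasingly ordered $W_u$-positions $s'_1<\cdots<s'_{a'}$ of $\alpha^{-1}$ are exactly the decreasingly ordered $W_x$-positions $s_{a+1}>\cdots>s_{a+b}$ of $\alpha$, so that $s'_k=s_{a+b+1-k}$ for every $k\in\{1,\dots,b\}$. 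Finally, Lemma \ref{lem zu=dz} applied to $\alpha$ and to $\alpha^{-1}$ gives $q'_u=m_{r_q}=q_x$ and $q'_x=d_{r_q}=q_u$, and substituting this into the recursive definitions of $a_q$ and $b_q$ yields $a'_q=b_q$ whenever $w_q\in W_x$ and $b'_q=a_q$ whenever $w_q\in W_u$, for all $q\in\{1,\dots,l\}$.

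Next I would fix $h\in\{1,\dots,b\}$ with $s_{a+h}\ge 2$, put $k:=b+1-h$, so that $s'_k=s_{a+h}$ and $w'_{s'_k-1}=w'_{s_{a+h}-1}$ is of type $W_x$ for $\alpha^{-1}$ precisely when $w_{s_{a+h}-1}$ is of type $W_u$ for $\alpha$. By the previous paragraph, $a'_{s_{a+h}-1}=b_{s_{a+h}-1}$ and $b'_{s_{a+h}-1}=a_{s_{a+h}-1}$, so the two cases in the hypothesis on $\hat\rho$ are exactly the hypotheses of Proposition \ref{10} and of Proposition \ref{9}, respectively, read for $\alpha^{-1}$; moreover, if $w_{s_{a+h}-1}\in W_x$ then $s_{a+h}-1$ is a $W_x$-position of $\alpha$ below $s_{a+h}$, which forces $h<b$, hence $k\ge 2$, so Proposition \ref{9} is applicable. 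Since $d'_{r'_{s'_k}}=m_{r_{s_{a+h}}}$ by Lemma \ref{lem zu=dz}, applying Proposition \ref{9} or \ref{10} to $\alpha^{-1}$ at the point $d'_{r'_{s'_k}}-\hat\rho=m_{r_{s_{a+h}}}-\hat\rho$ would give
\[
(m_{r_{s_{a+h}}}-\hat\rho)\,\overline{w}_{\alpha^{-1}}=m'_{r'_{s'_k}}-\hat\rho=d_{r_{s_{a+h}}}-\hat\rho=(m_{r_{s_{a+h}}}-\hat\rho)\,\alpha^{-1},
\]
where $m_{r_{s_{a+h}}}-\hat\rho\in im(\alpha)=dom(\alpha^{-1})$, $d_{r_{s_{a+h}}}-\hat\rho\in dom(\alpha)$, and $(d_{r_{s_{a+h}}}-\hat\rho)\alpha=m_{r_{s_{a+h}}}-\hat\rho$ follow by the same computations (using Lemmas \ref{A tran= word} and \ref{8}) that identify the relevant block of the domain in the proofs of Propositions \ref{9} and \ref{10}.

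To finish I would transfer the identity back to $\overline{w}_\alpha$. Since $m_{r_{s_{a+h}}}-\hat\rho\in dom(\alpha^{-1})$, it avoids the set $A'$ induced by the leading $v$-letters of $w_{\alpha^{-1}}$, so $(m_{r_{s_{a+h}}}-\hat\rho)\,\overline{w}_{\alpha^{-1}}=(m_{r_{s_{a+h}}}-\hat\rho)\,\overline{w_{\alpha^{-1}}^*}$ and the displayed equality becomes $(m_{r_{s_{a+h}}}-\hat\rho)\,\overline{w_{\alpha^{-1}}^*}=d_{r_{s_{a+h}}}-\hat\rho$. By Lemma \ref{w alpha}, $\overline{w_{\alpha^{-1}}^*}=(\overline{w_\alpha^*})^{-1}$, and since $(\overline{w_\alpha^*})^{-1}$ is the inverse of the partial injection $\overline{w_\alpha^*}$, this is equivalent to $(d_{r_{s_{a+h}}}-\hat\rho)\,\overline{w_\alpha^*}=m_{r_{s_{a+h}}}-\hat\rho$. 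As $d_{r_{s_{a+h}}}-\hat\rho\in dom(\alpha)$ is not in $A$, we get $(d_{r_{s_{a+h}}}-\hat\rho)\,\overline{w}_\alpha=(d_{r_{s_{a+h}}}-\hat\rho)\,\overline{v}_A\,\overline{w_\alpha^*}=m_{r_{s_{a+h}}}-\hat\rho$, which together with $(d_{r_{s_{a+h}}}-\hat\rho)\alpha=m_{r_{s_{a+h}}}-\hat\rho$ is the assertion.

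I expect the main obstacle to be the bookkeeping in the first step: one has to be sure that every object built for $\alpha^{-1}$ --- the index set $\{r'_q\}$, the reordering producing $w_{\alpha^{-1}}^*$, the integers $q'_u,q'_x,a'_q,b'_q$, and the set $A'$ --- is genuinely the mirror of its counterpart for $\alpha$, so that the hypotheses of Propositions \ref{9} and \ref{10} (including the domain/image membership used there) carry over without change; once that is in place, the remaining steps are a purely formal computation relying only on Lemma \ref{w alpha} and the fact that $\overline{w_\alpha^*}$ is a partial injection.
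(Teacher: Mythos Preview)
Your proposal is correct and follows essentially the same route as the paper: you pass to $\alpha^{-1}$, apply Propositions \ref{9} and \ref{10} at the mirrored position, and transfer the conclusion back via Lemma \ref{w alpha} together with the fact that points of $dom(\alpha)$ (resp.\ $dom(\alpha^{-1})$) avoid $A$ (resp.\ $A'$). The only thing you spell out more fully than the paper is the dictionary $a'=b$, $s'_k=s_{a+b+1-k}$, $q'_u=q_x$, $q'_x=q_u$, $a'_q=b_q$, $b'_q=a_q$, which the paper compresses into ``it is easy to verify''; one small caveat is that the membership $d_{r_{s_{a+h}}}-\hat\rho\in dom(\alpha)$ is not derived from the computations in Propositions \ref{9} and \ref{10} but is an implicit hypothesis there (and hence here as well), exactly as the paper treats it.
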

\begin{proof}  We have $\alpha^{-1}=\bigl(\begin{smallmatrix}
	m_1 & < & \cdots & < & m_p \\
	d_1 &  &  \cdots &    & d_p
	\end{smallmatrix}\bigr)$ and obtain the words $v_{A'}w_{\alpha^{-1}}^*$ and $w'=w_1'...w'_lw'_{l+1}$  from $\alpha^{-1}$ by the same construction as  the words $v_{A}w_{\alpha}^*$  and $w=w_1...w_{l+1}$ from $\alpha$, respectively. It is easy to verify that $w'_{s_{a+h}}\in W_u$.  We show that $(d_{r_{s_{a+h}}}-\hat{\rho})\overline{w}_\alpha=m_{r_{s_{a+h}}}-\hat{\rho}$. By Propositions \ref{9} and \ref{10}, respectively, and Lemma \ref{w alpha}, we have $(m_{r_{s_{a+h}}}-\hat{\rho})\overline{v}_{A'} \overline{w_{\alpha^{-1}}^*}=d_{r_{s_{a+h}}}-\hat{\rho}=(m_{r_{s_{a+h}}}-\hat{\rho})\overline{v}_{A'} (\overline{w_{\alpha}^*})^{-1}$. Since $m_{r_{s_{a+h}}}-\hat{\rho}\in dom(\alpha^{-1})$ and $d_{r_{s_{a+h}}}-\hat{\rho}\in dom(\alpha)$, we can conclude that $m_{r_{s_{a+h}}}-\hat{\rho}\notin A'$ and $d_{r_{s_{a+h}}}-\hat{\rho}\notin A $, respectively. Then 
	$(m_{r_{s_{a+h}}}-\hat{\rho})\overline{v}_{A'} (\overline{w_{\alpha}^*})^{-1}=d_{r_{s_{a+h}}}-\hat{\rho}$
	implies $(m_{r_{s_{a+h}}}-\hat{\rho})(\overline{w_{\alpha}^*})^{-1}=(d_{r_{s_{a+h}}}-\hat{\rho})\overline{v}_A$. We apply $\overline{w_{\alpha}^*}$ to that equation and obtain 
	$ m_{r_{s_{a+h}}}-\hat{\rho}  
	= (d_{r_{s_{a+h}}}-\hat{\rho})\overline{v}_A\overline{w_{\alpha}^*}$, i.e. $ m_{r_{s_{a+h}}}-\hat{\rho}  
	= (d_{r_{s_{a+h}}}-\hat{\rho})\overline{w}_\alpha$.
\end{proof}
It remains to consider all  $\rho$ in domain $\alpha$ with $\rho\leq d_{r_1}$ and $\rho > d_{r_l}$, respectively.

\begin{prop} \label{mp=dp} If $w_{l}=w_{s_a}$ and $m_p=d_p$ then  $\rho\overline{w}_\alpha=\rho\alpha=\rho$ for all $\rho\in\{i_{s_a}+2\lvert w_{s_a}\rvert+2,...,n\}\cap dom(\alpha)$. 
\end{prop}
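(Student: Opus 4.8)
The plan is to prove the two equalities $\rho\alpha=\rho$ and $\rho\overline{w}_\alpha=\rho$ separately. The first thing I would record is what the hypotheses mean: since $a+b=l$ when $m_p=d_p$, the index $l$ is the largest one occurring among $w_1,\dots,w_{a+b}$, so ``$w_l=w_{s_a}$'' says exactly that $w_l\in W_u$ and $s_a=l$. Hence, by Lemma \ref{lem zu=dz}, $i_{s_a}=d_{r_l}$, and by Lemma \ref{x>u} applied with $k=a$ (no $W_x$-index can exceed $s_a=l$), $m_{r_l}=i_{s_a}+2\lvert w_{s_a}\rvert$; so the hypothesis $\rho\geq i_{s_a}+2\lvert w_{s_a}\rvert+2$ reads $\rho\geq m_{r_l}+2$. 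I would keep these identities at hand.

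For $\rho\alpha=\rho$: because $r_l$ is the largest of $r_1,\dots,r_l$, every $i\in\{r_l+1,\dots,p-1\}$ satisfies $d_{i+1}-d_i=m_{i+1}-m_i$, so telescoping together with $d_p=m_p$ gives $d_j=m_j$ for all $j\in\{r_l+1,\dots,p\}$. Now $\rho\in dom(\alpha)$ means $\rho=d_j$ for some $j$; if $j\leq r_l$ then $\rho=d_j\leq d_{r_l}=i_{s_a}\leq m_{r_l}<\rho$, a contradiction, so $j\geq r_l+1$ and therefore $\rho\alpha=m_j=d_j=\rho$.

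For $\rho\overline{w}_\alpha=\rho$: first $\rho\notin A$ (as $\rho\in dom(\alpha)$), so $\rho\overline{v}_A=\rho$ and it is enough to verify that each factor of $\overline{w}_{s_1}\cdots\overline{w}_{s_a}\,\overline{w_{s_{a+1}}^{-1}}\cdots\overline{w_{s_{a+b}}^{-1}}$ fixes $\rho$. For a $W_u$-block: by Remark \ref{move}(ii), $\overline{w}_{s_d}$ fixes every point $\geq i_{s_d}+2\lvert w_{s_d}\rvert+2$; for $d=a$ this is the hypothesis, while for $d<a$ Lemma \ref{1-4}(ii) gives $i_{s_d}+2\lvert w_{s_d}\rvert+2\leq i_{s_a}<\rho$. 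For a $W_x$-block: by Remark \ref{move}(iv), $\overline{w_{s_{a+h}}^{-1}}$ fixes every point $\geq i_{s_{a+h}}+2\lvert w_{s_{a+h}}^{-1}\rvert+2$; since $s_{a+h}<l$, Lemmas \ref{1-4}(iv) and \ref{lem zu=dz} give $i_{s_{a+h}}+2\lvert w_{s_{a+h}}^{-1}\rvert+2\leq(s_{a+h}+1)_x=m_{r_{s_{a+h}+1}}\leq m_{r_l}<\rho$. Since every factor fixes $\rho$, so does the product, whence $\rho\overline{w}_\alpha=\rho$; combined with the previous step, $\rho\overline{w}_\alpha=\rho\alpha=\rho$.

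I expect the two threshold estimates to be the only nonroutine points. For the $W_u$-blocks they drop out of Lemma \ref{1-4}(ii) and the hypothesis; for the $W_x$-blocks I must route the bound for $i_{s_{a+h}}+2\lvert w_{s_{a+h}}^{-1}\rvert+2$ through $(s_{a+h}+1)_x=m_{r_{s_{a+h}+1}}$, the monotonicity of the sequences $(r_k)$ and $(m_k)$, and the identity $m_{r_l}=i_{s_a}+2\lvert w_{s_a}\rvert$ to get back to the hypothesis. The other delicate point is in the proof of $\rho\alpha=\rho$: I need to use that $\rho$ is a genuine point of $dom(\alpha)$ — and not a ``gap'' element strictly between $d_{r_l}$ and $d_{r_l+1}$ — to conclude that $\rho=d_j$ with $j>r_l$.
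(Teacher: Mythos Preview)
Your proof is correct and follows essentially the same strategy as the paper: first identify the threshold $i_{s_a}+2\lvert w_{s_a}\rvert+2$ with $m_{r_l}+2$, deduce $\rho\alpha=\rho$, and then check that every factor of $\overline{w}_\alpha$ fixes $\rho$. The only notable difference is in the bookkeeping: the paper routes the two threshold estimates through Lemma~\ref{tool}(i) and~(v) (applied, in effect, at the endpoint $\rho=i_{s_a}$), whereas you bypass Lemma~\ref{tool} entirely and appeal directly to Lemma~\ref{1-4}(ii) for the $W_u$-blocks and to Lemma~\ref{1-4}(iv) together with Lemma~\ref{lem zu=dz} for the $W_x$-blocks. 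Your route is slightly more self-contained, and you also spell out the telescoping argument for $\rho\alpha=\rho$ that the paper leaves implicit.
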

\begin{proof}
	Let $\rho\in\{i_{s_a}+2\lvert w_{s_a}\rvert+2,...,n\}\cap dom(\alpha)$.	First, we  show that $m_{r_{l}}+2=i_{s_a}+2\lvert w_{s_a}\rvert+2$. In fact by Lemma \ref{lem zu=dz}, we have $m_{r_{l}}+2=l_x+2=i_{l}+2\lvert w_{l}\rvert+2=i_{s_a}+2\lvert w_{s_a}\rvert+2$. 
	Therefore, $\rho\in\{m_{r_{l}}+2,...,n\}$ and we have $\rho\alpha=\rho$. Since $\rho\in dom(\alpha)$, we can conclude $\rho\notin A$ by the definition of set $A$. Hence, we get 
	$	\rho\overline{w}_\alpha =\rho\overline{v}_A\overline{w}_{s_1}...\overline{w}_{s_a}\overline{w_{s_{a+1}}^{-1}}...\overline{w_{s_{a+b}}^{-1}}=\rho\overline{w}_{s_1}...\overline{w}_{s_a}\overline{w_{s_{a+1}}^{-1}}...\overline{w_{s_{a+b}}^{-1}}$.  Then we have 
	$\rho\overline{w}_{s_1}...\overline{w}_{s_a}\overline{w_{s_{a+1}}^{-1}}...\overline{w_{s_{a+b}}^{-1}}= \rho\overline{w_{s_{a+1}}^{-1}}...\overline{w_{s_{a+b}}^{-1}}$ by Remark  \ref{move} and Lemma \ref{tool}(i).
	By Lemma \ref{tool}(v), we can conclude $
	i_{s_{a+1}}+2\lvert w_{s_{a+1}}^{-1}\rvert+2 \leq  i_{s_a}+2\lvert w_{s_a}\rvert 
	< i_{s_a}+2\lvert w_{s_a}\rvert+2 
	\leq \rho$, which provides $\rho\overline{w_{s_{a+1}}^{-1}}...\overline{w_{s_{a+b}}^{-1}}=\rho$ by Remark \ref{move}. Thus, $\rho\overline{w}_\alpha=\rho=\rho\alpha$.
\end{proof}
Similarly, we can prove:
\begin{prop} \label{mp'}  If $w_{l}=w_{s_{a+1}}$ and 	
	$m_p=d_p$ then  $\rho\overline{w}_\alpha=\rho\alpha=\rho$ for all $\rho\in\{i_{s_{a+1}}+2\lvert w_{s_{a+1}}\rvert+2,...,n\}\cap dom(\alpha)$.
\end{prop}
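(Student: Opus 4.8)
The plan is to deduce Proposition \ref{mp'} from Proposition \ref{mp=dp} by passing to the inverse transformation $\alpha^{-1}$, in the spirit of the proof of Proposition \ref{12}. First I would translate the range in the statement into the ``$d$-coordinates''. Since $w_l=w_{s_{a+1}}=x_{i_l,j_l}\in W_x$ and $m_p=d_p$, the recursive definition of the indices gives $(l+1)_u=(l+1)_x=d_p$, whence $l_u=d_p-b_l-2=d_p-((l+1)_x-i_l-2j_l-2)-2=i_l+2j_l$; by Lemma \ref{lem zu=dz} this is $d_{r_l}$, so $i_{s_{a+1}}+2\lvert w_{s_{a+1}}\rvert=d_{r_l}$ and the set in the statement equals $\{d_{r_l}+2,\dots,n\}\cap dom(\alpha)$. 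Moreover $r_l$ is the largest index with $d_{r_l+1}-d_{r_l}\neq m_{r_l+1}-m_{r_l}$, so the two difference sequences agree from $r_l+1$ on; combined with $m_p=d_p$ this forces $d_j=m_j$ for all $j\in\{r_l+1,\dots,p\}$. Hence any $\rho\in\{d_{r_l}+2,\dots,n\}\cap dom(\alpha)$ has the form $d_j$ with $j>r_l$, so $\rho\alpha=m_j=d_j=\rho$; in particular $\rho\in im(\alpha)=dom(\alpha^{-1})$, and $\rho\notin A$ as well as $\rho\notin A'$, where $A'$ is the set induced by the word built from $\alpha^{-1}$.

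Next I would apply Proposition \ref{mp=dp} to $\alpha^{-1}=\bigl(\begin{smallmatrix} m_1 & < & \cdots & < & m_p \\ d_1 & & \cdots & & d_p \end{smallmatrix}\bigr)$, which again lies in $IOF_n^{par}\backslash(Id_{\overline{n}}\cup\{\varepsilon\})$ and satisfies the analogue of the hypothesis $m_p=d_p$. By the proof of Lemma \ref{w alpha} the subwords of $\alpha^{-1}$ arise from those of $\alpha$ by interchanging $W_u$ and $W_x$, so the subword of index $l$ for $\alpha^{-1}$ is $w'_l=u_{i_l,j_l}\in W_u$; since $l$ is the largest index among all subwords (as $m_p=d_p$), $w'_l$ is the $W_u$-subword of $\alpha^{-1}$ of largest index, i.e. the hypothesis ``$w_l=w_{s_a}$'' of Proposition \ref{mp=dp} holds for $\alpha^{-1}$, and the corresponding threshold ``$i_{s_a}+2\lvert w_{s_a}\rvert$'' is $i_l+2j_l=i_{s_{a+1}}+2\lvert w_{s_{a+1}}\rvert$. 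Therefore Proposition \ref{mp=dp} gives $\rho\,\overline{v}_{A'}\,\overline{w_{\alpha^{-1}}^{*}}=\rho\alpha^{-1}=\rho$ for every $\rho$ in our set.

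Finally I would transfer back to $\alpha$: by Lemma \ref{w alpha} we have $\overline{w_{\alpha^{-1}}^{*}}=(\overline{w_{\alpha}^{*}})^{-1}$, so $\rho\,\overline{v}_{A'}\,(\overline{w_{\alpha}^{*}})^{-1}=\rho$, and since $\rho\notin A'$ this becomes $\rho\,(\overline{w_{\alpha}^{*}})^{-1}=\rho$; applying the partial injection $\overline{w_{\alpha}^{*}}$ to both sides yields $\rho\,\overline{w_{\alpha}^{*}}=\rho$, and since $\rho\notin A$ we conclude $\rho\,\overline{w}_{\alpha}=\rho\,\overline{v}_A\,\overline{w_{\alpha}^{*}}=\rho=\rho\alpha$. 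The one point that needs care is the bookkeeping identifying the data of Proposition \ref{mp=dp} applied to $\alpha^{-1}$ with the quantities appearing in Proposition \ref{mp'} — namely that $w'_l$ really is the last $W_u$-subword of $\alpha^{-1}$ and that $i_l+2j_l$ is precisely the stated threshold; once this is settled, the rest is exactly the transfer argument already used in Proposition \ref{12}.
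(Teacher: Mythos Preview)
Your argument is correct, but it follows a different route from what the paper intends. The paper writes ``Similarly, we can prove'' immediately after Proposition~\ref{mp=dp}, meaning one should rerun that direct computation with the roles of $W_u$ and $W_x$ swapped: establish $d_{r_l}+2=i_{s_{a+1}}+2\lvert w_{s_{a+1}}\rvert+2$ (exactly as you do), observe $\rho\alpha=\rho$ and $\rho\notin A$, and then check directly via Remark~\ref{move} together with Lemma~\ref{1-4} that every factor $\overline{w}_{s_1},\dots,\overline{w}_{s_a},\overline{w_{s_{a+1}}^{-1}},\dots,\overline{w_{s_{a+b}}^{-1}}$ fixes $\rho$, since $\rho$ exceeds each threshold $i_{s_k}+2j_{s_k}+2$ and $i_{s_{a+d}}+2j_{s_{a+d}}+2$.

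You instead reduce Proposition~\ref{mp'} to Proposition~\ref{mp=dp} by passing to $\alpha^{-1}$ and invoking Lemma~\ref{w alpha}, mirroring the transfer technique of Proposition~\ref{12}. This is a legitimate shortcut: once you verify (as you do) that the hypothesis $w'_l=w'_{s'_{a'}}$ holds for $\alpha^{-1}$, that the threshold for $\alpha^{-1}$ coincides with $i_{s_{a+1}}+2\lvert w_{s_{a+1}}\rvert$, and that $\rho$ lies in $dom(\alpha^{-1})$, the rest is formal. Your approach avoids redoing the inequality chasing at the cost of importing Lemma~\ref{w alpha} and the bookkeeping about the primed data; the paper's direct approach is self-contained but repeats essentially the same calculation as in Proposition~\ref{mp=dp}. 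Both are short; yours exhibits the underlying symmetry between $\alpha$ and $\alpha^{-1}$ more explicitly.
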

\begin{prop}\label{begin} Let  $\hat{\rho}\in\{0,...,min\{1_u,1_x\}-1\}$ such that $d_{r_1}-\hat{\rho}\in dom(\alpha)$. Then we have $(d_{r_1}-\hat{\rho})\overline{w}_\alpha=(d_{r_1}-\hat{\rho})\alpha$.
\end{prop}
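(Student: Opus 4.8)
The plan is first to determine the value $(d_{r_1}-\hat\rho)\alpha$, and then to compute $(d_{r_1}-\hat\rho)\overline{w}_\alpha$ with exactly the bookkeeping used in Propositions \ref{9} and \ref{10}, organised according to whether $w_1\in W_u$ or $w_1\in W_x$; in the second situation I will reduce to the first by passing to $\alpha^{-1}$ and imitating Proposition \ref{12}. (The subword $w_1$ is non-empty here, since $w_1=\epsilon$ would force $l=0$ and $m_p=d_p$, making $\alpha$ a partial identity, which is excluded.)

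First I would put $\rho=d_{r_1}-\hat\rho$. Since $\rho\in dom(\alpha)$ and $\rho\leq d_{r_1}$, we have $\rho=d_j$ for some $j\leq r_1$; as no such $i<r_1$ is a break point, the differences $d_{i+1}-d_i$ and $m_{i+1}-m_i$ coincide for all $i<r_1$, so telescoping gives $d_{r_1}-d_j=m_{r_1}-m_j$, hence $\rho\alpha=m_j=m_{r_1}-\hat\rho$. Combined with Lemma \ref{lem zu=dz} (which yields $1_u=d_{r_1}$ and $1_x=m_{r_1}$) and with $d_j,m_j\geq 1$, this also shows that the assumed bound $\hat\rho\leq\min\{1_u,1_x\}-1$ is automatic. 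Moreover, $\rho\in dom(\alpha)$ forces $\rho\notin A$, so $\rho\,\overline{v}_A=\rho$ and it remains to prove $\rho\,\overline{w_\alpha^*}=m_{r_1}-\hat\rho$.

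For the case $w_1\in W_u$ I would run the computation of Proposition \ref{9} with $k=1$. Here $s_1=1$, $w_1=w_{s_1}$, and $i_{s_1}=1_u=d_{r_1}$ by Lemma \ref{lem zu=dz}, so $\rho=i_{s_1}-\hat\rho$ lies in $\{1,\dots,i_{s_1}\}$ when $1_u\leq 1_x$ and in $\{1_u-1_x+1,\dots,i_{s_1}\}$ when $1_u>1_x$; in either case the hypotheses of Lemma \ref{tool} hold with $k=1$ (the boundary case $1_u=1_x$ falling under the $1_u<1_x$ argument). Since $s_1=1$ is the least index of all, every $x$-index exceeds $s_1$, so in the computation of Proposition \ref{9} the ``greatest $h$'' equals $b$ and the subcase $s_{a+1}<s_k$ does not arise. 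Thus Remark \ref{move}(ii) and Lemma \ref{tool}(ii) give $\rho\,\overline{w}_{s_1}\cdots\overline{w}_{s_a}=i_{s_1}-\hat\rho+2\lvert w_{s_1}\cdots w_{s_a}\rvert$, and then Remark \ref{move}(iv) and Lemma \ref{tool}(iii) give $\rho\,\overline{w_\alpha^*}=i_{s_1}-\hat\rho+2\lvert w_{s_1}\cdots w_{s_a}\rvert-2\lvert w_{s_{a+1}}^{-1}\cdots w_{s_{a+b}}^{-1}\rvert$, which equals $m_{r_1}-\hat\rho$ by Lemma \ref{x>u} (when $b=0$ this is the ``otherwise'' clause of that lemma).

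For the case $w_1\in W_x$ I would pass to $\alpha^{-1}=\bigl(\begin{smallmatrix} m_1&<&\cdots&<&m_p\\ d_1&&\cdots&&d_p\end{smallmatrix}\bigr)$. By the proof of Lemma \ref{w alpha}, the first subword of the construction for $\alpha^{-1}$ lies in $W_u$ and the break points of $\alpha^{-1}$ coincide with those of $\alpha$; moreover $1'_u=m_{r_1}$, $1'_x=d_{r_1}$, and $m_{r_1}-\hat\rho=m_j\in dom(\alpha^{-1})$. Hence the case just proved, applied to $\alpha^{-1}$, gives $(m_{r_1}-\hat\rho)\overline{w}_{\alpha^{-1}}=d_{r_1}-\hat\rho$; since $m_{r_1}-\hat\rho\notin A'$, Lemma \ref{w alpha} turns this into $(m_{r_1}-\hat\rho)(\overline{w_\alpha^*})^{-1}=d_{r_1}-\hat\rho$, and applying $\overline{w_\alpha^*}$ together with $\rho\notin A$ yields $\rho\,\overline{w}_\alpha=\rho\,\overline{w_\alpha^*}=m_{r_1}-\hat\rho=\rho\alpha$, as required. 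I expect the only real work to be organisational: confirming that the two ``$s_k=1$'' branches of Lemma \ref{tool} are exactly those needed in the first case (they are, having been tailored to it), and keeping the dictionary $d'_i=m_i$, $r'_q=r_q$ straight when transporting the statement to $\alpha^{-1}$; no estimate beyond Lemmas \ref{lem zu=dz}, \ref{x>u}, \ref{w alpha}, \ref{tool} and Remark \ref{move} should be required.
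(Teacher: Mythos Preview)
Your proposal is correct and follows essentially the same approach as the paper: both split into the cases $w_1\in W_u$ and $w_1\in W_x$, handle the first by the $s_k=1$ branch of Lemma~\ref{tool} together with Remark~\ref{move} and Lemma~\ref{x>u}, and reduce the second to the first via $\alpha^{-1}$ and Lemma~\ref{w alpha}. Your version is slightly more explicit in justifying $(d_{r_1}-\hat\rho)\alpha=m_{r_1}-\hat\rho$ by telescoping and in noting that $w_1$ is non-empty, but these are elaborations rather than a different route.
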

\begin{proof} Recall that $1_u=d_{r_1}$ and $1_x=m_{r_1}$ by Lemma \ref{lem zu=dz}. \\
	\indent	If $1_u\leq 1_x$ then $\hat{\rho}\in\{0,...,1_u-1\}$, i.e. $d_{r_1}-\hat{\rho}\in\{1,...,1_u\}=\{1,...,d_{r_1}\}$. \\
	\indent	If $1_u>1_x$ then $\hat{\rho}\in\{0,...,1_x-1\}$, i.e. $d_{r_1}-\hat{\rho}\in\{1_u-1_x+1,...,1_u\}=\{d_{r_1}-m_{r_1}+1,...,d_{r_1}\}$.  
	This implies $(d_{r_1}-\hat{\rho})\alpha = m_{r_1}-\hat{\rho}$. 
	We will show that $(d_{r_1}-\hat{\rho})\overline{w}_\alpha = m_{r_1}-\hat{\rho}$. Since $d_{r_1}-\hat{\rho}\in dom(\alpha)$, we can conclude $d_{r_1}-\hat{\rho}\notin A$ by the definition of set $A$.  \\ Suppose that $w_1\in W_u$, i.e. $w_1=w_{s_1}$. Then we can calculate
	$(d_{r_1}-\hat{\rho})\overline{w}_\alpha \\ =(d_{r_1}-\hat{\rho})\overline{v}_A\overline{w}_{s_1}...\overline{w}_{s_a}\overline{w_{s_{a+1}}^{-1}}...\overline{w_{s_{a+b}}^{-1}} \\
	=(d_{r_1}-\hat{\rho})\overline{w}_{s_1}...\overline{w}_{s_a}\overline{w_{s_{a+1}}^{-1}}...\overline{w_{s_{a+b}}^{-1}}$ \quad \quad  \quad \quad \quad  (since  $d_{r_1}-\hat{\rho}\notin A$) \\
	$	=(i_{s_1}-\hat{\rho}+2\lvert{w}_{s_1}...{w}_{s_a}\rvert)\overline{w_{s_{a+1}}^{-1}}...\overline{w_{s_{a+b}}^{-1}}$  \quad \quad \quad    (by Remark \ref{move} and Lemma \ref{tool}(ii))  \\
	$=i_{s_1}-\hat{\rho}+2\lvert{w}_{s_1}...{w}_{s_a}\rvert-2\lvert{w}_{s_{a+1}}^{-1}...{w}_{s_{a+b}}^{-1}\rvert$ \quad \  \ (by Remark \ref{move} and Lemma \ref{tool}(iii))  \\
	$=m_{r_1}-\hat{\rho}$  \quad \quad \quad  \quad \quad  \quad \quad \quad \quad \quad \quad \quad \quad \quad \quad   (by Lemma \ref{x>u}).	\\
	Suppose now $w_1\in W_x$, i.e. $w_1=w_{s_{a+b}}$. We have  $\alpha^{-1}=\bigl(\begin{smallmatrix}
	m_1 & < & \cdots & < & m_p \\
	d_1 &  &  \cdots &    & d_p
	\end{smallmatrix}\bigr)$ and obtain the words
	$w'=w_1'...w'_lw'_{l+1}$ and $w_{\alpha^{-1}}=v_{A'}w_{\alpha^{-1}}^*$ by the same constructions as for the words $w$ and $w_\alpha$, respectively, from $\alpha$. It is easy to verify that $w'_1\in W_u$. We will show that $(d_{r_1}-\hat{\rho})\overline{w}_{\alpha}=m_{r_1}-\hat{\rho}$. As above and using Lemma \ref{w alpha}, we can show that $ d_{r_1}-\hat{\rho}=(m_{r_1}-\hat{\rho})\overline{v}_{A'}\overline{w_{\alpha^{-1}}^*}=(m_{r_1}-\hat{\rho})\overline{v}_{A'}(\overline{w_{\alpha}^*})^{-1}$. Since  $m_{r_1}-\hat{\rho}\in dom (\alpha^{-1})$, i.e. $m_{r_1}-\hat{\rho}\notin A'$, and because $d_{r_1}-\hat{\rho}\notin A$, we obtain 
	$(m_{r_1}-\hat{\rho})(\overline{w_{\alpha}^*})^{-1} = (d_{r_1}-\hat{\rho})\overline{v}_A$. That equation provides $ 
	(m_{r_1}-\hat{\rho})(\overline{w_{\alpha}^*})^{-1}\overline{w_{\alpha}^*} = (d_{r_1}-\hat{\rho})\overline{v}_A\overline{w_{\alpha}^*}$, i.e. $
	m_{r_1}-\hat{\rho} = (d_{r_1}-\hat{\rho})\overline{w}_{\alpha}$.
\end{proof}
Up to this point, we have only considered  such $\rho\in\overline{n}$ which belong to $dom(\alpha)$ and could show that $\rho\alpha = \rho\overline{w}_\alpha$. Now we will consider the remaining elements $\rho$ in $\overline{n}$ and show that $\rho\notin dom(\alpha)$ as well as $\rho\notin dom(\overline{w}_\alpha)$. If we have done it, then we can conclude that both transformations $\alpha$ and $\overline{w}_\alpha$ are equal. \\
\indent First, we consider the intervals  of $\overline{n}$ regarded in Proposition \ref{9}-\ref{begin} and  show that if $\rho\notin dom(\alpha)$ then $\rho\in A$. This provides that $\rho \notin dom(\overline{v}_A)$. Since $dom(\overline{w}_\alpha)= dom(\overline{v}_A\overline{w_\alpha^*})\subseteq dom(\overline{v}_A)$, we obtain $\rho\notin dom(\overline{w}_\alpha)$.
\begin{prop} \label{not}
	Let $k\in\{1,...,a\}$. Then $\{i_{s_k}+2\lvert w_{s_k}\rvert+2,...,(s_k+1)_u-1\}\backslash dom(\alpha)\subseteq A$.
\end{prop}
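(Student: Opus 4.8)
The plan is to prove the inclusion elementwise: I will show that if $\rho$ lies in the interval $\{i_{s_k}+2|w_{s_k}|+2,\ldots,(s_k+1)_u-1\}$ and $\rho\notin dom(\alpha)$, then $v_\rho$ occurs as a letter of the word $\lambda_p$, i.e.\ $\rho\in A$. All of the content is in locating $\rho$ correctly among the gaps between consecutive elements of $dom(\alpha)$ and then reading off, from the recursive construction of $\lambda_1,\ldots,\lambda_p$ — in particular from step (3) — that the letter $v_\rho$ was actually inserted.

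First I would rewrite the two endpoints of the interval in terms of $\alpha$. Assume for the moment that $w_{s_k}\neq w_{l+1}$, so $s_k\leq l$ and $s_k+1\leq l+1$. By Lemma \ref{lem zu=dz}, $i_{s_k}=(s_k)_u=d_{r_{s_k}}$ and $(s_k+1)_u=d_{r_{s_k+1}}$; and since $w_{s_k}=u_{d_{r_{s_k}},j_{s_k}}\in W_u$ with $j_{s_k}=\frac{(d_{r_{s_k}+1}-d_{r_{s_k}})-(m_{r_{s_k}+1}-m_{r_{s_k}})}{2}$, a one-line computation (equivalently, Lemma \ref{A tran= word}) gives
\[
i_{s_k}+2|w_{s_k}|+2=d_{r_{s_k}+1}-(m_{r_{s_k}+1}-m_{r_{s_k}})+2.
\]
Since $r_{s_k}$ and $r_{s_k+1}$ are consecutive members of $\{r_1<\cdots<r_l<r_{l+1}=p\}$, condition (iii) in the construction of the $r_i$ yields $d_{i+1}-d_i=m_{i+1}-m_i$ for all $i\in\{r_{s_k}+1,\ldots,r_{s_k+1}-1\}$, so by telescoping $d_{r_{s_k+1}}-m_{r_{s_k+1}}=d_{r_{s_k}+1}-m_{r_{s_k}+1}$; hence the interval under consideration is
\[
I:=\{\,d_{r_{s_k}+1}-(m_{r_{s_k}+1}-m_{r_{s_k}})+2,\ \ldots,\ d_{r_{s_k+1}}-1\,\},
\]
whose left endpoint lies in $\{d_{r_{s_k}}+3,\ldots,d_{r_{s_k}+1}\}$ because $w_{s_k}\in W_u$ forces $d_{r_{s_k}+1}-d_{r_{s_k}}>m_{r_{s_k}+1}-m_{r_{s_k}}\geq 2$. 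Consequently $I\cap dom(\alpha)=\{d_{r_{s_k}+1},\ldots,d_{r_{s_k+1}-1}\}$.

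Now I fix $\rho\in I\backslash dom(\alpha)$, so there is a unique $\kappa\in\{r_{s_k}+1,\ldots,r_{s_k+1}\}$ with $d_{\kappa-1}<\rho<d_\kappa$, and I split into two cases. If $\kappa=r_{s_k}+1$, then $m_\kappa-m_{\kappa-1}=m_{r_{s_k}+1}-m_{r_{s_k}}\geq 2$ while $d_\kappa-d_{\kappa-1}>m_\kappa-m_{\kappa-1}$; the very existence of $\rho$ forces $m_\kappa-m_{\kappa-1}>2$, so $2<m_\kappa-m_{\kappa-1}<d_\kappa-d_{\kappa-1}$ and step (3.2) prepends $v_{d_\kappa-(m_\kappa-m_{\kappa-1}-2)}\cdots v_{d_\kappa-1}$, whose index range starts exactly at the left endpoint of $I$ and ends at $d_\kappa-1\geq\rho$, giving $v_\rho\in var(\lambda_p)$. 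If $\kappa\geq r_{s_k}+2$, then $\kappa-1\notin\{r_1,\ldots,r_l\}$, so $d_\kappa-d_{\kappa-1}=m_\kappa-m_{\kappa-1}$ by condition (iii) again; this common value is $\geq 2$ since $d_{\kappa-1}<\rho<d_\kappa$, so step (3.1) prepends $v_{d_{\kappa-1}+1}\cdots v_{d_\kappa-1}$ and again $v_\rho\in var(\lambda_p)$. As every letter inserted at any stage of the construction of $\lambda_0,\ldots,\lambda_p$ is a variable of $\lambda_p$, we conclude $\rho\in A$. It remains the case $w_{s_k}=w_{l+1}$ (then necessarily $k=a$, $d_p<m_p$ and $w_{l+1}=u_{d_p,(m_p-d_p)/2}$): here $i_{s_k}+2|w_{s_k}|+2=m_p+2$, the interval contains no element of $dom(\alpha)$ because $d_p=\max dom(\alpha)<m_p$, and the inclusion $\{m_p+2,\ldots,n\}\subseteq A$ is supplied directly by step (1.2) of the construction (the set being empty when $m_p\geq n-1$).

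The routine-looking bookkeeping of the second paragraph — pinning the two endpoints of $I$ to the domain values $d_{r_{s_k}+1}$ and $d_{r_{s_k+1}}$ via Lemmas \ref{lem zu=dz} and \ref{A tran= word} and condition (iii) of the $r$-construction — is really the crux, together with the discipline of matching each admissible position of $\rho$ to the precise sub-case (3.1) or (3.2) that inserts $v_\rho$, while keeping the genuine domain elements $d_{r_{s_k}+1},\ldots,d_{r_{s_k+1}-1}$ (and the borderline case $m_{r_{s_k}+1}-m_{r_{s_k}}=2$, where the left part of $I$ degenerates) out of the argument.
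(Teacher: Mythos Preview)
Your argument is correct and is essentially the paper's own proof, written out with more care. Like the paper, you translate the endpoints via Lemma~\ref{lem zu=dz} and Lemma~\ref{A tran= word} into the form $\{d_{r_{s_k+1}}-(m_{r_{s_k+1}}-m_{r_{s_k}}-2),\ldots,d_{r_{s_k+1}}-1\}$, then split according to whether $\rho< d_{r_{s_k}+1}$ (handled by clause (3.2)) or $\rho>d_{r_{s_k}+1}$ (handled by clause (3.1)). Your explicit telescoping $d_{r_{s_k+1}}-m_{r_{s_k+1}}=d_{r_{s_k}+1}-m_{r_{s_k}+1}$ and your explicit remark on the degenerate case $m_{r_{s_k}+1}-m_{r_{s_k}}=2$ are details the paper leaves implicit, but the structure is identical.

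One remark: your final paragraph, treating $w_{s_k}=w_{l+1}$, lies outside the scope of the proposition as stated. When $s_k=l+1$ the quantity $(s_k+1)_u$ is simply undefined (the integers $k_u,k_x$ are only introduced for $k\le l+1$), so the interval in the statement has no meaning and there is nothing to prove; the paper handles the tail $\{m_p+2,\ldots,n\}$ separately in Proposition~\ref{not2}. You may safely drop that paragraph.
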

\begin{proof} 
	By Lemma \ref{A tran= word}, we have $i_{s_k}+2\lvert w_{s_k}\rvert+2=d_{r_{{s_k}+1}}-(m_{r_{s_k+1}}-m_{r_{s_k}}-2)$, where $({s_k}+1)_u-1=d_{r_{{s_k}+1}}-1$ by Lemma \ref{lem zu=dz}. Then $\{i_{s_k}+2\lvert w_{s_k}\rvert+2,...,(s_k+1)_u-1\}=\{d_{r_{{s_k}+1}}-(m_{r_{{s_k}+1}}-m_{r_{s_k}}-2),...,d_{r_{{s_k}+1}}-1\}$. Let $\rho\in\{d_{r_{{s_k}+1}}-(m_{r_{{s_k}+1}}-m_{r_{s_k}}-2),...,d_{r_{{s_k}+1}}-1\}$.
	If $\rho\leq d_{r_{s_k}+1}-1$ then we obtain $\rho\in A$ by (3.2) of the definition of set $A$. 
	If $\rho>d_{r_{s_k}+1}$ then there is $t\in\{r_{s_k}+1,...,r_{{s_k}+1}-1\}$ such that $\rho\in\{d_{t}+1,...,d_{t+1}-1\}$. Then $\rho\in A$ by (3.1) of the definition of set $A$.
\end{proof} 
\begin{prop} \label{not1} Let $d\in\{1,...,b\}$. Then $\{(s_{a+d}+1)_u-b_{s_{a+d}},...,(s_{a+d}+1)_u-1\}\backslash dom(\alpha)\subseteq A.$ 
\end{prop}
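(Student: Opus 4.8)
The plan is to argue exactly as in the proof of Proposition~\ref{not}, with the families $W_u$ and $W_x$ interchanged, item (3.3) of the definition of the set $A$ playing the role that item (3.2) plays there.

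The first step is to rewrite the interval in terms of the domain of $\alpha$. Since $b_{s_{a+d}}$ is only defined when $s_{a+d}\le l$, we have $r_{s_{a+d}}\in\{1,\dots,p-1\}$, so the successor integer $r_{s_{a+d}}+1$ is a legitimate position; I write $d_{r_{s_{a+d}}+1}$ for the corresponding entry of $dom(\alpha)$, which in general differs from $d_{r_{s_{a+d}+1}}$, the entry indexed by the word-index $r_{s_{a+d}+1}$ of the factor following $w_{s_{a+d}}$. By Lemma~\ref{8} we have $(s_{a+d}+1)_u-b_{s_{a+d}}=d_{r_{s_{a+d}}}+2$, and by Lemma~\ref{lem zu=dz} we have $(s_{a+d}+1)_u=d_{r_{s_{a+d}+1}}$, hence
\[
\{(s_{a+d}+1)_u-b_{s_{a+d}},\dots,(s_{a+d}+1)_u-1\}=\{d_{r_{s_{a+d}}}+2,\dots,d_{r_{s_{a+d}+1}}-1\}.
\]
Moreover $r_{s_{a+d}}$ and $r_{s_{a+d}+1}$ are consecutive members of $\{r_1,\dots,r_l\}$ (with $r_{s_{a+d}+1}=p$ when $s_{a+d}=l$), so every integer strictly between them is a non-discrepancy index in the sense of condition (iii) of the definition of $\{r_1,\dots,r_l\}$.

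The second step is a case split. Fix $\rho$ in the above interval with $\rho\notin dom(\alpha)$, and distinguish whether $\rho<d_{r_{s_{a+d}}+1}$ or $\rho>d_{r_{s_{a+d}}+1}$ (equality is excluded). In the first case $\rho\in\{d_{r_{s_{a+d}}}+2,\dots,d_{r_{s_{a+d}}+1}-1\}$; since $w_{s_{a+d}}\in W_x$ we have $m_{r_{s_{a+d}}+1}-m_{r_{s_{a+d}}}>d_{r_{s_{a+d}}+1}-d_{r_{s_{a+d}}}$, and $d_{r_{s_{a+d}}+1}-d_{r_{s_{a+d}}}\ge 2$ because $r_{s_{a+d}}$ is a discrepancy index; the value $2$ would make the interval empty, so in fact $d_{r_{s_{a+d}}+1}-d_{r_{s_{a+d}}}>2$, and item (3.3) of the definition of set $A$ (with $k=r_{s_{a+d}}+1$) puts exactly $v_{d_{r_{s_{a+d}}}+2},\dots,v_{d_{r_{s_{a+d}}+1}-1}$ into the word, whence $\rho\in A$. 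In the second case there is $t$ with $r_{s_{a+d}}<t<r_{s_{a+d}+1}$ and $d_t<\rho<d_{t+1}$; then $t$ is a non-discrepancy index, so $d_{t+1}-d_t=m_{t+1}-m_t$, this quantity is $>1$ since $\rho$ lies strictly between $d_t$ and $d_{t+1}$, and item (3.1) of the definition of set $A$ (with $k=t+1$) puts $v_{d_t+1},\dots,v_{d_{t+1}-1}$ into the word, whence $\rho\in A$. Either way $\rho\in A$, which is the claim.

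The only genuine work is the bookkeeping: keeping the successor position $r_{s_{a+d}}+1$ apart from the word-index $r_{s_{a+d}+1}$, and verifying in each branch that the quoted item of the definition of $A$ really does fire rather than its trivial alternative — which, as one sees, happens precisely when the corresponding sub-interval is non-empty, so no letter is missed. These checks are identical in spirit to those in the proof of Proposition~\ref{not}; note finally that the boundary value $s_{a+d}=l+1$ does not arise, since $b_{s_{a+d}}$ is undefined in that case.
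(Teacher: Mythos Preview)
Your proof is correct and follows essentially the same route as the paper's own proof: rewrite the interval via Lemmas~\ref{lem zu=dz} and~\ref{8} as $\{d_{r_{s_{a+d}}}+2,\dots,d_{r_{s_{a+d}+1}}-1\}$, then split according to whether $\rho$ lies below or above $d_{r_{s_{a+d}}+1}$ and invoke item~(3.3) or item~(3.1) of the construction of $A$, respectively. Your version is more explicit than the paper's in verifying that the hypotheses of~(3.1) and~(3.3) actually hold (in particular, the observation that $d_{r_{s_{a+d}}+1}-d_{r_{s_{a+d}}}>2$ whenever the first sub-interval is non-empty), and your care in distinguishing $r_{s_{a+d}}+1$ from $r_{s_{a+d}+1}$ is well placed.
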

\begin{proof} 	We have $({s_{a+d}}+1)_u -1 = d_{r_{{s_{a+d}}+1}}-1$ and $({s_{a+d}}+1)_u-b_{s_{a+d}}=d_{r_{s_{a+d}}}+2$ by Lemmas \ref{lem zu=dz} and  \ref{8}, respectively.   Then $\{({s_{a+d}}+1)_u-b_{s_{a+d}},...,({s_{a+d}}+1)_u-1\}=\{d_{r_{s_{a+d}}}+2,...,d_{r_{{s_{a+d}}+1}}-1\}$. Let $\rho\in\{d_{r_{s_{a+d}}}+2,...,d_{r_{{s_{a+d}}+1}}-1\}$. 
	If $\rho\leq d_{r_{s_{a+d}}+1}-1$ then we obtain $\rho\in A$ by  (3.3) of the definition of set $A$. 
	If $\rho>d_{r_{s_{a+d}}+1}$ then there is $t\in\{r_{s_{a+d}}+1,...,r_{{s_{a+d}}+1}-1\}$ such that $\rho\in\{d_{t}+1,...,d_{t+1}-1\}$. Then $\rho\in A$ by  (3.1) of the definition of set $A$.
\end{proof}
Finally, we consider the case that $m_p\neq d_p$. In this case, we have $w_{l+1}\in W_u\cup W_x$.
\begin{prop} \label{not2}
	If $m_p>d_p$ and  $ i_{s_a}+2\lvert w_{s_a}\rvert+2\leq n $  then $\{i_{s_a}+2\lvert w_{s_a}\rvert+2,...,n\}\subseteq A$.
\end{prop}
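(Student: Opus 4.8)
The plan is to follow the same pattern used in Propositions~\ref{not} and~\ref{not1}: identify the interval $\{i_{s_a}+2\lvert w_{s_a}\rvert+2,\dots,n\}$ in terms of data of $\alpha$, and then show every element of it that is missing from $dom(\alpha)$ lies in the set $A$ induced by the word $\lambda_p=w_\alpha$. Since we are in the case $m_p>d_p$, we have $w_{l+1}\in W_u$ (by the construction of $w_{l+1}$: $d_p<m_p$ forces $w_{l+1}=u_{d_p,(m_p-d_p)/2}$), so $w_{l+1}=w_{s_a}$, i.e. $w_{l+1}$ is the last $W_u$-subword. By Lemma~\ref{lem zu=dz} we have $(l+1)_u=d_{r_{l+1}}=d_p$ and $(l+1)_x=m_{r_{l+1}}=m_p$, and from the definition of $(l+1)_x$ when $w_{l+1}\in W_u$ we get $i_{s_a}=i_{l+1}=d_p$ and $i_{s_a}+2j_{l+1}=(l+1)_x=m_p$. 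Hence $i_{s_a}+2\lvert w_{s_a}\rvert=i_{l+1}+2j_{l+1}=m_p$, so the interval in question is exactly $\{m_p+2,\dots,n\}$.

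The first key step is therefore to rewrite the claim as: $\{m_p+2,\dots,n\}\setminus dom(\alpha)\subseteq A$. Note $m_p>d_p$ means $m_p\ge d_p+2$ (same parity), so $m_p+2\le n$ is the hypothesis. Now I would invoke clause~(1.1) of the construction of $\lambda_0$: since $m_p<d_p$ is false and we are in the branch $m_p>d_p$, and $d_p\le n-2$ is forced here (because $d_p<m_p\le n$ and same parity give $d_p\le n-2$), the relevant subcase is~(1.2), $\lambda_0=v_{m_p+2}\dots v_n w_\alpha^*$ — provided $m_p<n-1$; and if $m_p=n-1$ then $m_p+2>n$ contradicts the hypothesis, while if $m_p=n$ again $m_p+2>n$; so in fact the hypothesis $i_{s_a}+2\lvert w_{s_a}\rvert+2\le n$ forces $m_p+2\le n$, hence $m_p\le n-2<n-1$, and subcase~(1.2) applies. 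Thus every index $m_p+2,\dots,n$ contributes its letter $v_\rho$ to $\lambda_0$, hence to $\lambda_p=w_\alpha$, so every such $\rho$ lies in $A$ — and in particular every $\rho\in\{m_p+2,\dots,n\}$, whether or not it is in $dom(\alpha)$. That already gives $\{i_{s_a}+2\lvert w_{s_a}\rvert+2,\dots,n\}\subseteq A$, which is even stronger than the stated inclusion restricted to the complement of $dom(\alpha)$.

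The main obstacle is bookkeeping: one must be careful that passing from $\lambda_0$ to $\lambda_p$ through steps~(2)--(6) only \emph{adds} letters $v_j$ and never removes any, so $var(\lambda_0)\subseteq var(\lambda_p)=var(w_\alpha)$ and the induced set only grows; this is immediate from inspection of the recursion but should be stated. One must also double-check the edge behaviour when $m_p$ equals $n-1$ or $n$ — but as noted these are excluded by the hypothesis $i_{s_a}+2\lvert w_{s_a}\rvert+2\le n$, i.e. $m_p+2\le n$, so subcase~(1.2) (not~(1.3), which is the $m_p=d_p$ case, nor the ``otherwise'' clause, which would require $m_p\ge n-1$) is the operative one. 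Everything else is the routine identification $i_{s_a}+2\lvert w_{s_a}\rvert = m_p$ already carried out above via Lemmas~\ref{lem zu=dz}.
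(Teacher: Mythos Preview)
Your proposal is correct and follows essentially the same route as the paper's proof: identify $i_{s_a}+2\lvert w_{s_a}\rvert$ with $m_p$ via $s_a=l+1$, $w_{l+1}\in W_u$, and Lemma~\ref{lem zu=dz}, then invoke clause~(1.2) of the construction of $\lambda_0$ to conclude $\{m_p+2,\dots,n\}\subseteq A$. The paper's argument is terser and omits the edge-case checks (that $d_p\le n-2$ and $m_p<n-1$ are forced) and the remark that later steps only enlarge $var(\lambda_\bullet)$; your extra bookkeeping is sound but not needed beyond what the paper assumes implicitly.
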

\begin{proof} We have $l+1=s_a$, $r_{l+1}=p$, $w_{l+1}\in W_u$, and $(l+1)_x=i_{l+1}+2j_{l+1}$. Then 
	$i_{l+1}+2j_{l+1}+2 = (l+1)_x+2
	= m_{r_{l+1}}+2=m_p+2$ by Lemma \ref{lem zu=dz}.
	Thus, $\{i_{s_a}+2\lvert w_{s_a}\rvert+2,...,n\}=\{i_{l+1}+2\lvert w_{l+1}\rvert+2,...,n\}=\{m_p+2,...,n\}$. Then by  (1.2) of the definition of set $A$, we get that $\{i_{s_a}+2\lvert w_{s_a}\rvert+2,...,n\}\subseteq A$.
\end{proof}
Using (1.1) from the definition of set $A$ instead of (1.2), we obtain similarly:
\begin{prop} \label{short proof}
	If $m_p<d_p$  and $ i_{s_{a+1}}+2\lvert w_{s_{a+1}}^{-1}\rvert+2\leq n $ then $\{i_{s_{a+1}}+2\lvert w_{s_{a+1}}^{-1}\rvert+2,...,n\}\subseteq A$.
\end{prop}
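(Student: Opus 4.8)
The plan is to argue exactly as in Proposition \ref{not2}, with the roles of $W_u$ and $W_x$ (and of the domain and image of $\alpha$) interchanged. Since $m_p<d_p$ we in particular have $m_p\neq d_p$, so the word $w_{l+1}$ from the construction of $w_\alpha^*$ is non-empty, and the defining case split gives $w_{l+1}=x_{m_{p},\frac{d_p-m_p}{2}}\in W_x$. Because $w_{l+1}$ carries the largest index $l+1=a+b$, and among the $W_x$-words $w_{s_{a+1}},\ldots,w_{s_{a+b}}$ the index $s_{a+1}$ is the largest, we get $s_{a+1}=l+1$.

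Next I would read off from the recursive definition of $(l+1)_u$ and $(l+1)_x$ in the case $w_{l+1}\in W_x$ that $(l+1)_x=i_{l+1}$ and $(l+1)_u=i_{l+1}+2j_{l+1}$. Lemma \ref{lem zu=dz} (applied with $r_{l+1}=p$) identifies $(l+1)_u=d_p$ and $(l+1)_x=m_p$, so $i_{l+1}=m_p$ and $2j_{l+1}=d_p-m_p$. Using $\lvert w_{s_{a+1}}^{-1}\rvert=j_{s_{a+1}}=j_{l+1}$, this gives $i_{s_{a+1}}+2\lvert w_{s_{a+1}}^{-1}\rvert+2=m_p+(d_p-m_p)+2=d_p+2$, hence $\{i_{s_{a+1}}+2\lvert w_{s_{a+1}}^{-1}\rvert+2,\ldots,n\}=\{d_p+2,\ldots,n\}$.

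Finally, the hypothesis $i_{s_{a+1}}+2\lvert w_{s_{a+1}}^{-1}\rvert+2\leq n$ now reads $d_p\leq n-2$, which is precisely the case in which clause (1.1) of the construction of $A$ applies: since $m_p<d_p$, it forms $\lambda_0=v_{d_p+2}\ldots v_n w_\alpha^*$, and the letters $v_{d_p+2},\ldots,v_n$ persist through to $w_\alpha=\lambda_p$. Therefore $\{d_p+2,\ldots,n\}\subseteq A$, as required. There is essentially no obstacle here; the only point that needs care is the index bookkeeping that locates $w_{l+1}$ as $w_{s_{a+1}}$ and then verifies the identity $i_{s_{a+1}}+2\lvert w_{s_{a+1}}^{-1}\rvert+2=d_p+2$, after which the conclusion is immediate from Lemma \ref{lem zu=dz} and clause (1.1).
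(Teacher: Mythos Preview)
Your proposal is correct and follows exactly the approach the paper intends: the paper's proof of Proposition~\ref{short proof} simply says to argue as in Proposition~\ref{not2} with clause~(1.1) in place of~(1.2), and you have carried out precisely that dual computation, correctly identifying $s_{a+1}=l+1$, $i_{s_{a+1}}+2\lvert w_{s_{a+1}}^{-1}\rvert+2=d_p+2$, and then invoking~(1.1).
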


Now, we consider the remaining intervals in $\overline{n}$. We start with the interval before $d_1$ and after $d_p$, respectively. \\

\indent By (1.3), (2), (5), (6), and (3.1), respectively, of the definition of set $A$, we obtain immediately: 
\begin{prop} \label{21} 
	(i)	If $m_p=d_p<n$ then $\{m_p+1,...,n\}\subseteq A$. \\
	(ii)	If $1<d_1\leq m_1$ then $\{1,...,d_1-1\}\subseteq A$.  \\
	(iii)	If $1<m_1< d_1$ then $\{d_1-m_1+1,...,d_1-1\}\subseteq A$.  \\
	(iv)	If $1_u\neq 1_x$ and $1\notin \{1_u,1_x\}$ then $\{d_{t}+1,...,d_{t+1}-1\}\subseteq A$ for all $t\in\{1,...,r_1-1\}$.   
\end{prop}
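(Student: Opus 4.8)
The plan is to verify each of the four inclusions by inspecting the recursive construction of the words $\lambda_0,\lambda_1,\dots,\lambda_p$ and recalling that $A=\{a\in\overline{n}:v_a\in var(\lambda_p)\}$. The key preliminary observation is that every rule (1)--(6) only \emph{prepends} letters from $\{v_1,\dots,v_n\}$ to the word built so far and never deletes a letter, so $var(\lambda_0)\subseteq var(\lambda_1)\subseteq\cdots\subseteq var(\lambda_p)=A$. Hence, for each of the four intervals, it suffices to exhibit one step of the construction that introduces all of its $v$-letters.

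For (i) I would split on the value of $d_p=m_p<n$: if $d_p\le n-2$, then rule (1.3) gives $\lambda_0=v_{m_p+1}\cdots v_n w_\alpha^*$, so $\{m_p+1,\dots,n\}\subseteq var(\lambda_0)\subseteq A$; if $d_p=n-1$, then rule (2) gives $\lambda_0=v_n w_\alpha^*$ and $\{m_p+1,\dots,n\}=\{n\}\subseteq A$. Parts (ii) and (iii) are read off in exactly the same way from rules (5) and (6): under the hypothesis $1<d_1\le m_1$ of (ii) we have $\lambda_p=v_1\cdots v_{d_1-1}\lambda_{p-1}$, and under the hypothesis $1<m_1<d_1$ of (iii) we have $\lambda_p=v_{d_1-m_1+1}\cdots v_{d_1-1}\lambda_{p-1}$; in both cases the displayed letters lie in $var(\lambda_p)=A$. (That these intervals are genuinely relevant, rather than empty, and that they are disjoint from $dom(\alpha)$, follows at once from $d_1=\min dom(\alpha)$.)

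For (iv) I would fix $t\in\{1,\dots,r_1-1\}$. Since $t<r_1\le r_2\le\cdots\le r_l$, the index $t$ does not lie in $\{r_1,\dots,r_l\}$, so condition (iii) in the definition of $\{r_1,\dots,r_l\}$ yields $d_{t+1}-d_t=m_{t+1}-m_t$. Now I apply rule (3) with $k=t+1$: if this common difference is at least $2$, then rule (3.1) makes $\lambda_{p-t}=v_{d_t+1}\cdots v_{d_{t+1}-1}\lambda_{p-t-1}$, so $\{d_t+1,\dots,d_{t+1}-1\}\subseteq var(\lambda_{p-t})\subseteq A$; and if the common difference equals $1$, then $\{d_t+1,\dots,d_{t+1}-1\}=\emptyset\subseteq A$ trivially. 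Either way the claimed inclusion holds for every $t<r_1$.

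The argument is essentially bookkeeping, so there is no serious obstacle; the single point needing a moment's care is the step in (iv) where the minimality of $r_1$ is used to be sure that the relevant instance of rule (3) is precisely case (3.1) (or the trivial empty case) and never case (3.2) or (3.3) --- that is, that for every $t<r_1$ the domain gap $d_{t+1}-d_t$ coincides with the range gap $m_{t+1}-m_t$. Once that is in place, each inclusion drops out of the corresponding clause of the construction, which is exactly what the phrasing ``By (1.3), (2), (5), (6), and (3.1), respectively'' in the statement anticipates.
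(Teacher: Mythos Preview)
Your proposal is correct and follows exactly the approach the paper intends: the paper's ``proof'' consists solely of the sentence ``By (1.3), (2), (5), (6), and (3.1), respectively, of the definition of set $A$, we obtain immediately,'' and you have simply spelled out the bookkeeping behind that sentence. Your monotonicity observation $var(\lambda_0)\subseteq\cdots\subseteq var(\lambda_p)$ and, in part (iv), your explicit use of $t\notin\{r_1,\dots,r_l\}$ to force case (3.1), are exactly the details the paper suppresses.
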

\begin{prop} \label{111}
	If $1_x<1_u$  then $\rho\notin dom(\overline{w}_\alpha)$ for all  $\rho\in\{1,...,1_u-1_x\}$.
\end{prop}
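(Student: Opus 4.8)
The plan is to work with the factorisation $\overline{w}_\alpha=\overline{v}_A\,\overline{w_\alpha^*}$: since $\overline{v}_A$ is a restriction of the identity, $\mathrm{dom}(\overline{w}_\alpha)=(\overline{n}\setminus A)\cap\mathrm{dom}(\overline{w_\alpha^*})\subseteq\mathrm{dom}(\overline{w_\alpha^*})$, so it suffices to show $\{1,\dots,1_u-1_x\}\cap\mathrm{dom}(\overline{w_\alpha^*})=\emptyset$. By Lemma \ref{lem zu=dz} we have $1_u=d_{r_1}$, $1_x=m_{r_1}$, so the hypothesis reads $m_{r_1}<d_{r_1}$; moreover $d_{r_1}-m_{r_1}=d_1-m_1<d_1\le d_{r_1}$, because $d_{i+1}-d_i=m_{i+1}-m_i$ for all $i<r_1$. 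Fix $\rho\in\{1,\dots,1_u-1_x\}$; then $\rho<d_{r_1}=1_u$. I would split according to whether $w_1\in W_u$ or $w_1\in W_x$.

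If $w_1\in W_u$, then $s_1=1$ and $i_{s_1}=1_u=d_{r_1}$, so $\rho<i_{s_1}$. The key observation is that the estimates of Lemma \ref{1-4}(ii) and the inductive computations in the proofs of Lemma \ref{tool}(ii),(iii) use the hypothesis on $\rho$ only through the bound ``$\rho\le i_{s_1}$'', hence remain valid for our small $\rho$. Consequently, using Remark \ref{move}, the letters $\overline{w}_{s_1},\dots,\overline{w}_{s_a}$ successively translate $\rho$ upwards (Lemma \ref{tool}(ii)), so the value becomes $\rho+2\lvert w_{s_1}\cdots w_{s_a}\rvert$, and then each $\overline{w_{s_{a+d}}^{-1}}$ either translates the current value downwards or is undefined at it, since Lemma \ref{tool}(iii) (here $h=b$, as $s_1=1$) excludes the ``identity part'' of $\overline{x^{-1}_{i_{s_{a+d}},j_{s_{a+d}}}}$. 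If $\rho$ belonged to $\mathrm{dom}(\overline{w_\alpha^*})$, all steps would be defined, and Lemma \ref{x>u} with $k=1$ (which yields $2\lvert w_{s_1}\cdots w_{s_a}\rvert-2\lvert w_{s_{a+1}}^{-1}\cdots w_{s_{a+b}}^{-1}\rvert=m_{r_1}-d_{r_1}$) would force $\rho\overline{w_\alpha^*}=\rho-(d_{r_1}-m_{r_1})\le 0$, which is impossible in $\overline{n}$. Hence $\rho\notin\mathrm{dom}(\overline{w_\alpha^*})$ in this case.

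If $w_1\in W_x$, I would pass to $\alpha^{-1}=\bigl(\begin{smallmatrix}m_1<\cdots<m_p\\ d_1&\cdots&d_p\end{smallmatrix}\bigr)$. The construction for $\alpha^{-1}$ has the same ``bad'' indices (proof of Lemma \ref{w alpha}) and interchanges $W_u$ and $W_x$, so the first word in the construction for $\alpha^{-1}$ lies in $W_u$, $1'_u=m_{r_1}<d_{r_1}=1'_x$, and (by Proposition \ref{begin} for $\alpha^{-1}$ with $\hat\rho=0$, together with $m_{r_1}\in\mathrm{dom}(\alpha^{-1})\subseteq\overline{n}\setminus A'$ and $\overline{w}_{\alpha^{-1}}=\overline{v}_{A'}\,\overline{w_{\alpha^{-1}}^*}$) we have $m_{r_1}\overline{w_{\alpha^{-1}}^*}=d_{r_1}$. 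Running the computation of the previous paragraph for $\alpha^{-1}$ — now in the ``increasing'' situation $1'_u<1'_x$ — Lemma \ref{tool}(ii),(iii) applied to any $\sigma\le m_{r_1}$ and Lemma \ref{x>u} give $\sigma\overline{w_{\alpha^{-1}}^*}=\sigma+(d_{r_1}-m_{r_1})\ge d_{r_1}-m_{r_1}+1$ for every $\sigma\in\mathrm{dom}(\overline{w_{\alpha^{-1}}^*})$ with $\sigma\le m_{r_1}$, while order-preservation and injectivity of $\overline{w_{\alpha^{-1}}^*}$ (an element of $IOF_n^{par}$) together with $m_{r_1}\overline{w_{\alpha^{-1}}^*}=d_{r_1}$ give $\sigma\overline{w_{\alpha^{-1}}^*}>d_{r_1}$ for $\sigma>m_{r_1}$. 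Thus $\mathrm{im}(\overline{w_{\alpha^{-1}}^*})\subseteq\{d_{r_1}-m_{r_1}+1,\dots,n\}$, and since $\overline{w_{\alpha^{-1}}^*}=(\overline{w_\alpha^*})^{-1}$ by Lemma \ref{w alpha}, $\mathrm{dom}(\overline{w_\alpha^*})=\mathrm{im}(\overline{w_{\alpha^{-1}}^*})\subseteq\{1_u-1_x+1,\dots,n\}$, so again $\rho\notin\mathrm{dom}(\overline{w_\alpha^*})$.

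In either case $\rho\notin\mathrm{dom}(\overline{w_\alpha^*})\supseteq\mathrm{dom}(\overline{w}_\alpha)$, which proves the claim. The main obstacle I anticipate is making rigorous the ``a fortiori'' application of Lemma \ref{tool}: its statement quantifies $\rho$ over a proper subinterval of $\{1,\dots,i_{s_1}\}$, but the proofs of parts (ii) and (iii) only use $\rho\le i_{s_1}$, so one should either re-derive those two estimates for arbitrary $\rho\le i_{s_1}$ or record a mildly strengthened version of Lemma \ref{tool}. A secondary point to be careful about is the bookkeeping in the passage to $\alpha^{-1}$ — in particular that $r'_q=r_q$, $1'_u=m_{r_1}$, $1'_x=d_{r_1}$, and that $\overline{w}_{\alpha^{-1}}=\overline{v}_{A'}\,\overline{w_{\alpha^{-1}}^*}$ with $\mathrm{dom}(\alpha^{-1})\cap A'=\emptyset$.
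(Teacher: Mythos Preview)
Your argument is correct, but it follows a genuinely different path from the paper's. The paper treats both cases $w_1\in W_u$ and $w_1\in W_x$ at once: it first shows that in either case $1_u-1_x=2\lvert\mathcal X\rvert-2\lvert\mathcal U\rvert$ (with $\mathcal U=w_{s_1}\cdots w_{s_a}$, $\mathcal X=w_{s_{a+b}}^{-1}\cdots w_{s_{a+1}}^{-1}$), then writes the $x$–part of $\overline{w_\alpha^*}$ as a product of single letters $\overline y_1\cdots\overline y_{\lvert\mathcal U\rvert+k}$, pushes $\rho$ through the $u$–block to $\rho+2\lvert\mathcal U\rvert$, and tracks it letter by letter through the $\overline y_t$'s until the running value drops to $1$ or $2$, at which point it visibly leaves $\mathrm{dom}(\overline x_j)\subseteq\{3,\dots,n\}$. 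No passage to $\alpha^{-1}$ and no use of Proposition~\ref{begin} are needed.

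Your route is instead a case split: for $w_1\in W_u$ you argue by contradiction at the block level, using Lemma~\ref{tool}(ii),(iii) to force every $\overline{w_{s_{a+d}}^{-1}}$ to subtract (or be undefined) and Lemma~\ref{x>u} to see the hypothetical output would be $\rho-(1_u-1_x)\le 0$; for $w_1\in W_x$ you exploit the duality $\overline{w_{\alpha^{-1}}^*}=(\overline{w_\alpha^*})^{-1}$ and compute $\mathrm{im}(\overline{w_{\alpha^{-1}}^*})$. Both approaches are valid. The paper's buys uniformity and an explicit witness for where the composition fails; yours avoids the letter-by-letter bookkeeping and is conceptually cleaner, at the cost of invoking Proposition~\ref{begin} and Lemma~\ref{w alpha} (both already available) and, as you note, the harmless observation that the proofs of Lemma~\ref{tool}(ii),(iii) use only $\rho\le i_{s_k}$ --- an extension the paper itself tacitly relies on when it writes ``since $\rho<1_u\le i_{s_1}$''.
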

\begin{proof}
	If $w_1\in W_u$ then $w_1=w_{s_1}$. By Lemmas \ref{lem zu=dz} and \ref{x>u}, we have that $1_x=i_{s_1}+2\lvert w_{s_1}...w_{s_a}\rvert-2\lvert w^{-1}_{s_{a+b}}...w^{-1}_{s_{a+1}}\rvert$ and $1_u=i_{s_1}$. 	If $w_1\in W_x$ then $w_1=w_{s_{a+b}}$. By Lemmas \ref{lem zu=dz} and \ref{u>x}, we have that $1_x=i_{s_{a+b}}$ and $1_u=i_{s_{a+b}}+2\lvert w^{-1}_{s_{a+b}}...w^{-1}_{s_{a+1}}\rvert-2\lvert w_{s_1}...w_{s_a}\rvert$.\\
	\indent Then $1_u-1_x= 2\lvert w^{-1}_{s_{a+b}}...w^{-1}_{s_{a+1}}\rvert-2\lvert w_{s_1}...w_{s_a}\rvert=2k$ for some positive integer $k$. We put $\mathcal{U}=w_{s_1}...w_{s_a}$ and $\mathcal{X}=w^{-1}_{s_{a+b}}...w^{-1}_{s_{a+1}}$, i.e. $2k=2\lvert\mathcal{X}\rvert-2\lvert\mathcal{U}\rvert$ and $\lvert\mathcal{X}\rvert=\lvert\mathcal{U}\rvert+k$. Let $\overline{w_{s_{a+1}}^{-1}}...\overline{w_{s_{a+b}}^{-1}}=\overline{y}_1...\overline{y}_{\lvert\mathcal{U}\rvert}\overline{y}_{\lvert\mathcal{U}\rvert+1}...\overline{y}_{\lvert\mathcal{U}\rvert+k}$, where $y_1,...,y_{\lvert\mathcal{U}\rvert+k}\in\{x_1,...,x_{n-2}\}$. Let  $\rho\in\{1,...,1_u-1_x\}$. Clearly, $\rho\notin dom(\alpha)$ and $\rho\notin A$   by the definition of  set $A$. On the other hand, we have 
	$	\rho\overline{v}_A\overline{w_\alpha ^*}\\
	=\rho\overline{w}_{s_1}...\overline{w}_{s_a}\overline{y}_1...\overline{y}_{\lvert\mathcal{U}\rvert}\overline{y}_{\lvert\mathcal{U}\rvert+1}...\overline{y}_{\lvert\mathcal{U}\rvert+k} \ \quad \quad \indent (\mbox{since} \ \rho\notin A) \\
	= (\rho+2\lvert w_{s_1}...w_{s_a}\rvert)\overline{y}_1...\overline{y}_{\lvert\mathcal{U}\rvert}\overline{y}_{\lvert\mathcal{U}\rvert+1}...\overline{y}_{\lvert\mathcal{U}\rvert+k}$ \ (since  $\rho<1_u\leq i_{s_1}$  and by Remark \ref{move}  and Lemma \ref{tool}(ii)). \\
	Using Lemma \ref{1-4}(i), it is routine to calculate that $2\lvert w^{-1}_{s_{a+b}}...w^{-1}_{s_{a+1}}\rvert< i_{s_{a+1}}+2\lvert w^{-1}_{s_{a+1}}\rvert$, i.e.
	$(1_u-1_x)+2\lvert w_{s_1}...w_{s_a}\rvert=   2\lvert w^{-1}_{s_{a+b}}...w^{-1}_{s_{a+1}}\rvert< i_{s_{a+1}}+2\lvert w^{-1}_{s_{a+1}}\rvert$. This implies $\rho+2\lvert w_{s_1}...w_{s_a}\rvert\leq i_{s_{a+1}}+2\lvert w^{-1}_{s_{a+1}}\rvert$. Then 
	$(\rho+2\lvert w_{s_1}...w_{s_a}\rvert)\overline{y}_1...\overline{y}_{\lvert\mathcal{U}\rvert}\overline{y}_{\lvert\mathcal{U}\rvert+1}...\overline{y}_{\lvert \mathcal{U}\rvert+k}=\rho\overline{y}_{\lvert\mathcal{U}\rvert+1}...\overline{y}_{\lvert\mathcal{U}\rvert+k}$ using Remark \ref{move}. Note that $1_u-1_x$ is even and there is $i\in\{2,4,...,1_u-1_x\}$   such that 
	$\rho\in\{i-1,i\}$. 
	If $\rho=i-1$ then $\rho-2\lvert{y}_{\lvert\mathcal{U}\rvert+1}...{y}_{\lvert\mathcal{U}\rvert+\frac{i}{2}-1}\rvert=1$.
	If $\rho=i$ then $\rho-2\lvert{y}_{\lvert\mathcal{U}\rvert+1}...{y}_{\lvert\mathcal{U}\rvert+\frac{i}{2}-1}\rvert=2$. Therefore, $	\rho\overline{y}_{\lvert\mathcal{U}\rvert+1}...\overline{y}_{\lvert\mathcal{U}\rvert+\frac{i}{2}}...\overline{y}_{\lvert\mathcal{U}\rvert+\frac{1_u-1_x}{2}} = (\rho-2\lvert{y}_{\lvert\mathcal{U}\rvert+1}...{y}_{\lvert\mathcal{U}\rvert+\frac{i}{2}-1}\rvert)\overline{y}_{\lvert\mathcal{U}\rvert+\frac{i}{2}}...\overline{y}_{\lvert\mathcal{U}\rvert+\frac{1_u-1_x}{2}}$  by Remark \ref{move}, i.e. 
	$\rho\overline{y}_{\lvert\mathcal{U}\rvert+1}...\overline{y}_{\lvert\mathcal{U}\rvert+\frac{i}{2}}...\overline{y}_{\lvert\mathcal{U}\rvert+\frac{1_u-1_x}{2}}	= \hat{\rho}\overline{y}_{\lvert\mathcal{U}\rvert+\frac{i}{2}}...\overline{y}_{\lvert\mathcal{U}\rvert+\frac{1_u-1_x}{2}}, \mbox{where} \ \hat{\rho}\in\{1,2\}$.
	Then we have $\hat{\rho}\notin dom(\overline{y}_{\lvert\mathcal{U}\rvert+\frac{i}{2}})$. This implies $\rho\notin dom(\overline{w}_\alpha)$.
\end{proof}  
It is easy to check that any interval $I$ of $\overline{n}$, which we have not yet regarded, belongs to an interval of the form $\{d_i+1,...,d_{i+1}-1\}$ for some $i\in\{1,....,l\}$, i.e. $I\cap dom(\alpha)=\emptyset$. It remains to show that $I\cap dom(\overline{w}_\alpha)=\emptyset$.
\begin{prop} \label{notin}
	Let $k\in\{1,...,a\}$ and let $\rho\in\{i_{s_k}+1,...,i_{s_k}+2\lvert w_{s_k}\rvert+1\}\cap \overline{n}$. Then $\rho\notin dom(\overline{w}_\alpha)$.
\end{prop}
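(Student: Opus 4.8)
The plan is to trace where an element $\rho$ lying strictly between $i_{s_k}+1$ and $i_{s_k}+2\lvert w_{s_k}\rvert+1$ goes under the factors of $\overline{w}_\alpha = \overline{v}_A\,\overline{w}_{s_1}\cdots\overline{w}_{s_a}\,\overline{w_{s_{a+1}}^{-1}}\cdots\overline{w_{s_{a+b}}^{-1}}$, and show it is killed by the factor $\overline{w}_{s_k}$ itself (or, if it survives to that point, by the reverse $x$-block). First I would observe that such a $\rho$ lies in the ``gap'' of the domain of $\overline{w}_{s_k}$: by Remark~\ref{move}(i)--(iii), the transformation $\overline{w}_{s_k}$ (whether it is $\overline{u}_{i_{s_k},j_{s_k}}$ or $\overline{x^{-1}_{i_{s_k},j_{s_k}}}$) has domain missing exactly the block $\{i_{s_k}+1,\dots,i_{s_k}+2j_{s_k}+1\}$ when viewed after the appropriate shift, and $2j_{s_k} = 2\lvert w_{s_k}\rvert$. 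So the key is to show that $\rho$ (after the action of the earlier factors $\overline{v}_A\,\overline{w}_{s_1}\cdots\overline{w}_{s_{k-1}}$) arrives unchanged, i.e. these earlier factors act as the identity on $\rho$.

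The second step is therefore to show $\rho\notin A$ and $\rho\notin\bigcup_{d<k}(\text{moved part of }\overline{w}_{s_d})$. That $\rho\notin A$ should follow from the already-established fact (stated right before Proposition~\ref{9}) that $A$ avoids $dom(\alpha)$ together with the structural description of $A$; more precisely, $\{i_{s_k}+1,\dots,i_{s_k}+2\lvert w_{s_k}\rvert+1\}$ is contained in an interval of the form $\{d_{r_{s_k}}+1,\dots,d_{r_{s_k}+1}-1\}$-ish region whose handling is exactly what Propositions~\ref{not} and \ref{not1} addressed, and the complement with $dom(\alpha)$ was absorbed into $A$ there; here I want the opposite inclusion — that this block is \emph{disjoint} from the variables $v_a$ forced into $\lambda_p$ — which I would read off from the recursion (1)--(6) defining the $\lambda_i$, since the $v$'s added there sit in the complementary ranges. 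For the earlier $\overline{w}_{s_d}$ with $d<k$: by Lemma~\ref{1-4}(ii) (if $w_{s_d}\in W_u$) we have $i_{s_d}+2\lvert w_{s_d}\rvert+1 < i_{s_k} < \rho$, so $\overline{w}_{s_d}=\overline{u}_{i_{s_d},j_{s_d}}$ fixes $\rho$ by Remark~\ref{move}(ii); the $W_x$ case is handled similarly via Lemma~\ref{1-4}(i) and Lemma~\ref{tool}(i) as in the proofs of Propositions~\ref{9},~\ref{10}.

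The third step is the action of $\overline{w}_{s_k}$ on $\rho$. Having shown $\rho$ reaches this factor unchanged, and since $i_{s_k}+1\le\rho\le i_{s_k}+2\lvert w_{s_k}\rvert+1 = i_{s_k}+2j_{s_k}+1$, Remark~\ref{move} gives $\rho\notin dom(\overline{w}_{s_k})$ directly when $w_{s_k}\in W_u$ (domain $\{1,\dots,i_{s_k},i_{s_k}+2j_{s_k}+2,\dots,n\}$), and when $w_{s_k}\in W_x$ I would instead note that $\overline{w}_{s_k}=\overline{x^{-1}_{i_{s_k},j_{s_k}}}$ appears among the \emph{reversed} block $\overline{w_{s_{a+1}}^{-1}}\cdots\overline{w_{s_{a+b}}^{-1}}$ and must be reached after the $W_u$-factors have shifted $\rho$ upward by $2\lvert w_{s_1}\cdots w_{s_a}\rvert$; the bookkeeping here mirrors Lemma~\ref{tool}(iii)--(v) and Proposition~\ref{10}, and one checks the shifted value still falls in the excluded block $\{i_{s_k}+1+(\text{shift}),\dots\}$. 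In either case $\overline{w}_{s_k}$ is undefined on the current value of $\rho$, hence $\rho\notin dom(\overline{w}_\alpha)$.

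I expect the main obstacle to be the bookkeeping in the $w_{s_k}\in W_x$ subcase: there the relevant factor is not applied in position $k$ of the word but rather somewhere in the tail of inverse-$x$ factors, so one must carry $\rho$ correctly through all intervening $\overline{u}$-factors and the earlier $\overline{x}^{-1}$-factors, invoking the precise length identities from Lemmas~\ref{u>x},~\ref{x>u},~\ref{A tran= word},~\ref{8} and the inequalities of Lemma~\ref{tool}, exactly as was done in Propositions~\ref{9} and~\ref{10}. The $W_u$ case, by contrast, should be a short direct computation from Remark~\ref{move}(ii) once $\rho\notin A$ and the identity-action of the earlier factors are in hand.
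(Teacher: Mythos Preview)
Your overall plan for the $W_u$ case is correct and matches the paper's proof: one checks $\rho\notin A$, then that the earlier factors $\overline{w}_{s_1},\ldots,\overline{w}_{s_{k-1}}$ fix $\rho$ (via Lemma~\ref{1-4}(ii)/Lemma~\ref{tool}(i) and Remark~\ref{move}), and finally that $\rho$ lies in the gap of $dom(\overline{w}_{s_k})$. In fact your suggestion to invoke Remark~\ref{move}(ii) directly for the composite $\overline{u}_{i_{s_k},j_{s_k}}$ is slightly slicker than the paper, which instead decomposes $w_{s_k}=u_{i_{s_k}}u_{i_{s_k}+2}\cdots u_{i_{s_k}+2\lvert w_{s_k}\rvert-2}$ and locates the particular letter $\overline{u}_{i_{s_k}+h}$ at which $\rho$ dies.

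However, your $w_{s_k}\in W_x$ subcase is vacuous and should be deleted. By the construction of $w_\alpha^*$, the indices $s_1<\cdots<s_a$ are \emph{precisely} those with $w_{s_1},\ldots,w_{s_a}\in W_u$, while $w_{s_{a+1}},\ldots,w_{s_{a+b}}\in W_x$. Since the hypothesis is $k\in\{1,\ldots,a\}$, we always have $w_{s_k}\in W_u$; there is nothing to do in the other case, and all the bookkeeping you anticipate there is unnecessary. (The analogous statement for the $W_x$ factors is handled separately, and differently, in Proposition~\ref{<n}.)

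One further point: your justification that $\rho\notin A$ is muddled. Propositions~\ref{not} and~\ref{not1} show that certain sets are \emph{contained in} $A$, which is the opposite direction from what you need. The correct argument is a direct inspection of the recursion (1)--(6) defining $\lambda_p$: for the relevant index $r_{s_k}$ one has $d_{r_{s_k}+1}-d_{r_{s_k}}>m_{r_{s_k}+1}-m_{r_{s_k}}$, so only clause~(3.2) contributes to $A$ in this block, and it inserts exactly the variables $v_{d_{r_{s_k}+1}-(m_{r_{s_k}+1}-m_{r_{s_k}}-2)},\ldots,v_{d_{r_{s_k}+1}-1}$, i.e.\ the indices $i_{s_k}+2\lvert w_{s_k}\rvert+2,\ldots,d_{r_{s_k}+1}-1$, which are disjoint from $\{i_{s_k}+1,\ldots,i_{s_k}+2\lvert w_{s_k}\rvert+1\}$. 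The paper simply asserts ``$\rho\notin A$ by the definition of set $A$'' without spelling this out.
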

\begin{proof} 
	First, we have $
	\rho\overline{v}_A\overline{w_\alpha^*} = \rho\overline{w}_{s_1}...\overline{w}_{s_a}\overline{w_{s_{a+1}}^{-1}}...\overline{w_{s_{a+b}}^{-1}}$ since  $\rho\notin A$ by the  definition of set   $A$ and 
	$\rho\overline{w}_{s_1}...\overline{w}_{s_a}\overline{w_{s_{a+1}}^{-1}}...\overline{w_{s_{a+b}}^{-1}}=\rho\overline{w}_{s_k}...\overline{w}_{s_a}\overline{w_{s_{a+1}}^{-1}}...\overline{w_{s_{a+b}}^{-1}}$ by Remark \ref{move} and Lemma \ref{tool}(i), where $w_{s_k}=u_{i_{s_k}}u_{i_{s_k}+2}...u_{i_{s_k}+2\lvert w_{{s_k}}\rvert-2}$.
	If $\rho\in\{i_{s_k}+1,i_{s_k}+2,i_{s_k}+3\}\cap\overline{n}$ then $\rho\notin dom (\overline{u}_{i_{s_k}})$ by Remark \ref{move}. 
	If $\rho=i_{s_k}+h+t$ for some $h\in\{2,4,...,2\lvert w_{s_k}\rvert-2\},  t\in\{2,3\}$ then \\
	$\rho\overline{u}_{i_{s_k}}\overline{u}_{i_{s_k}+2}...\overline{u}_{i_{s_k}+2\lvert w_{s_k}\rvert-2}\overline{w}_{s_{k+1}}...\overline{w}_{s_a}\overline{w_{s_{a+1}}^{-1}}...\overline{w_{s_{a+b}}^{-1}} \\=(i_{s_k}+h+t)\overline{u}_{i_{s_k}+h}...\overline{u}_{i_{s_k}+2\lvert w_{s_k}\rvert-2}\overline{w}_{s_{k+1}}... \overline{w}_{s_a}\overline{w_{s_{a+1}}^{-1}}...\overline{w_{s_{a+b}}^{-1}}$ \quad by Remark \ref{move}.  \\ We observe that $i_{s_k}+h+t\notin dom (\overline{u}_{i_{s_k}+h})$. Hence, $\rho \notin dom (\overline{w}_\alpha)$.
\end{proof}
\begin{prop}    \label{<n}
	Let $d\in\{1,...,b\}$ with $(s_{a+d})_u<n$. Then $(s_{a+d})_u+1\notin dom(\overline{w}_\alpha)$.
\end{prop}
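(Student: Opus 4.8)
Write $\rho=(s_{a+d})_u+1$. Since $w_{s_{a+d}}=x_{i_{s_{a+d}},j_{s_{a+d}}}\in W_x$, Remark~\ref{move}(iv) tells us that $\overline{w_{s_{a+d}}^{-1}}=\overline{x_{i_{s_{a+d}},j_{s_{a+d}}}^{-1}}$ is undefined precisely on $\{1,\dots,2j_{s_{a+d}}\}\cup\{i_{s_{a+d}}+2j_{s_{a+d}}+1\}$, and the hypothesis $(s_{a+d})_u<n$ makes $\rho$ an element of $\overline{n}$. If $\rho\in A$ then $\rho\notin\mathrm{dom}(\overline{v}_A)\supseteq\mathrm{dom}(\overline{w}_\alpha)$ and we are done; so the plan is to assume $\rho\notin A$ (hence $\rho\overline{v}_A=\rho$) and to follow $\rho$ along $\overline{w}_{s_1}\cdots\overline{w}_{s_a}\overline{w_{s_{a+1}}^{-1}}\cdots\overline{w_{s_{a+b}}^{-1}}$ by ``shadowing'' the adjacent element of $\mathrm{dom}(\alpha)$, namely $\rho-1=(s_{a+d})_u=d_{r_{s_{a+d}}}$ (Lemma~\ref{lem zu=dz}).

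First I would locate the value $V$ that $d_{r_{s_{a+d}}}$ has just before $\overline{w_{s_{a+d}}^{-1}}$ is applied. By Proposition~\ref{12} (or Proposition~\ref{begin} if $s_{a+d}=1$), the point $d_{r_{s_{a+d}}}$ lies in $\mathrm{dom}(\overline{w}_\alpha)$ with $d_{r_{s_{a+d}}}\overline{w}_\alpha=m_{r_{s_{a+d}}}=i_{s_{a+d}}$. For every $e\in\{d+1,\dots,b\}$ one has $s_{a+e}<s_{a+d}$, so Lemma~\ref{1-4}(i) gives $i_{s_{a+e}}+2j_{s_{a+e}}+1<i_{s_{a+d}}$, whence by Remark~\ref{move}(iv) each of $\overline{w_{s_{a+d+1}}^{-1}},\dots,\overline{w_{s_{a+b}}^{-1}}$ fixes $i_{s_{a+d}}$. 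Since the composite of all these maps sends $d_{r_{s_{a+d}}}$ to $i_{s_{a+d}}$, it follows that $V\,\overline{w_{s_{a+d}}^{-1}}=i_{s_{a+d}}$, and because $j_{s_{a+d}}\ge1$ this forces $V=i_{s_{a+d}}+2j_{s_{a+d}}$ --- the largest point of the interval on which $\overline{w_{s_{a+d}}^{-1}}$ subtracts $2j_{s_{a+d}}$.

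It remains to run the shadowing. By Remark~\ref{move}, each of $\overline{v}_A,\overline{w}_{s_1},\dots,\overline{w}_{s_a},\overline{w_{s_{a+1}}^{-1}},\dots,\overline{w_{s_{a+d-1}}^{-1}}$ acts on each interval of its domain as a translation by a constant, and any two consecutive such intervals are separated by at least one point outside the domain. As $d_{r_{s_{a+d}}}$ stays in the domain at every stage and $\rho=d_{r_{s_{a+d}}}+1$, an induction over the prefixes of $\overline{w}_{s_1}\cdots\overline{w}_{s_a}\overline{w_{s_{a+1}}^{-1}}\cdots\overline{w_{s_{a+d-1}}^{-1}}$ shows that at each stage either $\rho$ has already dropped out of the domain --- in which case $\rho\notin\mathrm{dom}(\overline{w}_\alpha)$ --- or $\rho$ is still defined and its current value exceeds that of $d_{r_{s_{a+d}}}$ by exactly $1$. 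In the surviving case $\rho$ enters $\overline{w_{s_{a+d}}^{-1}}$ with value $V+1=i_{s_{a+d}}+2j_{s_{a+d}}+1$, which is not in $\mathrm{dom}(\overline{w_{s_{a+d}}^{-1}})$; hence in either case $\rho\notin\mathrm{dom}(\overline{w}_\alpha)$. The main obstacle is the identification $V=i_{s_{a+d}}+2j_{s_{a+d}}$: one must be sure the tail $\overline{w_{s_{a+d+1}}^{-1}}\cdots\overline{w_{s_{a+b}}^{-1}}$ really fixes $m_{r_{s_{a+d}}}$, so that after the shadowing $\rho$ lands on the single interior gap of $\overline{w_{s_{a+d}}^{-1}}$ rather than one step past it (where $\overline{w_{s_{a+d}}^{-1}}$ would be defined again). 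Alternatively one can avoid the shadowing and compute $\rho\,\overline{w_\alpha^*}$ directly in terms of subword lengths, as in Lemma~\ref{tool} and Propositions~\ref{9}--\ref{notin}.
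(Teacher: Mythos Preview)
Your argument is correct. The shadowing induction works because each $\overline{u}_{i,j}$ and $\overline{x_{i,j}^{-1}}$ acts as a translation on each of its two domain intervals, and those intervals are separated by at least one missing point; hence if $\sigma$ and $\sigma+1$ both survive a step they receive the same shift, and if $\sigma$ is at the top of the lower interval then $\sigma+1$ falls in the gap. (The case $\sigma=n$ cannot arise while $\tau$ is still defined, since $\tau=\sigma+1\in\overline{n}$ is part of the inductive hypothesis and the images of these maps lie in $\overline{n}$.) Your identification $V=i_{s_{a+d}}+2j_{s_{a+d}}$ is also sound: injectivity of the tail together with Lemma~\ref{1-4}(i) forces $V\,\overline{w_{s_{a+d}}^{-1}}=i_{s_{a+d}}$, and the unique preimage of $i_{s_{a+d}}$ under $\overline{x_{i_{s_{a+d}},j_{s_{a+d}}}^{-1}}$ is $i_{s_{a+d}}+2j_{s_{a+d}}$.

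The paper takes a different and somewhat slicker route. It argues by contradiction: assuming $(s_{a+d})_u+1\in\mathrm{dom}(\overline{w}_\alpha)$, Proposition~\ref{4 choice}(iii) applied to $\overline{w}_\alpha\in IOF_n^{par}$ forces $((s_{a+d})_u+1)\overline{w}_\alpha=(s_{a+d})_x+1=i_{s_{a+d}}+1$, so $i_{s_{a+d}}+1\in\mathrm{dom}((\overline{w}_\alpha)^{-1})$. But $(\overline{w}_\alpha)^{-1}$ has the explicit form $\overline{u}_{i_{s_{a+b}},j_{s_{a+b}}}\cdots\overline{u}_{i_{s_{a+1}},j_{s_{a+1}}}\overline{x^{-1}_{i_{s_a},j_{s_a}}}\cdots\overline{x^{-1}_{i_{s_1},j_{s_1}}}(\overline{v}_A)^{-1}$, and after the initial $\overline{u}$-blocks with index below $i_{s_{a+d}}$ fix $i_{s_{a+d}}+1$ (Lemma~\ref{tool}(i)), one hits $\overline{u}_{i_{s_{a+d}}}$, whose domain excludes $i_{s_{a+d}}+1$ --- contradiction. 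Thus the paper trades your shadowing induction for a single appeal to Proposition~\ref{4 choice} and a pass to the inverse, where the relevant gap is at the \emph{start} of a $\overline{u}$-block rather than in the middle of an $\overline{x^{-1}}$-block. Your approach is more self-contained (it does not invoke Proposition~\ref{4 choice} on $\overline{w}_\alpha$), while the paper's is shorter and exploits the inverse-semigroup structure already established.
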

\begin{proof}
	Assume $(s_{a+d})_u+1\in dom(\overline{w}_\alpha)$. We have shown that  $(s_{a+d})_u\overline{w}_\alpha = (s_{a+d})_u\alpha = (s_{a+d})_x$ in Proposition \ref{12}. Recall that $\overline{w}_\alpha\in IOF_n^{par}$. Then Proposition \ref{4 choice}(i, iii) implies  
	$((s_{a+d})_u+1)\overline{w}_\alpha=(s_{a+d})_x+1$  and $ 
	(s_{a+d})_u+1 = ((s_{a+d})_u+1)\overline{w}_\alpha(\overline{w}_\alpha)^{-1} = ((s_{a+d})_x+1)(\overline{w}_\alpha)^{-1}$,
	i.e. $(s_{a+d})_x+1\in dom((\overline{w}_\alpha)^{-1})$. We have \\
	 $
	(\overline{w}_\alpha)^{-1}=(\overline{v}_A\overline{{w}}_{s_1}...\overline{{w}}_{s_a}{\overline{w_{s_{a+1}}^{-1}}}...{\overline{w_{s_{a+b}}^{-1}}})^{-1}  \\ 
	= ({\overline{x^{-1}_{i_{s_{a+b}},j_{s_{a+b}}}}})^{-1}...({\overline{x^{-1}_{i_{s_{a+1}},j_{s_{a+1}}}}})^{-1}(\overline{u}_{i_{s_{a}},j_{s_{a}}})^{-1} ...(\overline{u}_{i_{s_{1}},j_{s_{1}}})^{-1}(\overline{v}_A)^{-1} \\  = \overline{u}_{i_{s_{a+b}},j_{s_{a+b}}}...\overline{u}_{i_{s_{a+1}},j_{s_{a+1}}}\overline{x^{-1}_{i_{s_{a}},j_{s_{a}}}}...\overline{x^{-1}_{i_{s_{1}},j_{s_{1}}}} (\overline{v}_A)^{-1}$. \\
	Since $w_{s_{a+d}}\in W_x$, we get  $(s_{a+d})_x=i_{s_{a+d}}$.  Then   $ (i_{s_{a+d}}+1)(\overline{w}_\alpha)^{-1} \\ =(i_{s_{a+d}}+1)\overline{u}_{i_{s_{a+b}},j_{s_{a+b}}}... \overline{u}_{i_{s_{a+d}},j_{s_{a+d}}}...\overline{u}_{i_{s_{a+1}},j_{s_{a+1}}}  \overline{x^{-1}_{i_{s_{a}},j_{s_{a}}}}...\overline{x^{-1}_{i_{s_{1}},j_{s_{1}}}} (\overline{v}_A)^{-1} \\ = (i_{s_{a+d}}+1)\overline{u}_{i_{s_{a+d}},j_{s_{a+d}}}...\overline{u}_{i_{s_{a+1}},j_{s_{a+1}}}\overline{x^{-1}_{i_{s_{a}},j_{s_{a}}}}...\overline{x^{-1}_{i_{s_{1}},j_{s_{1}}}} (\overline{v}_A)^{-1}$ by Remark \ref{move} and Lemma \ref{tool}(i). \\
	Clearly, $i_{s_{a+d}}+1\notin dom (\overline{u}_{i_{s_{a+d}}})$ and thus, $(s_{a+d})_x+1\notin dom ((\overline{w}_\alpha)^{-1})$, a contradiction.
\end{proof}
Now, we can summarize all results and obtain that $\alpha$ and $\overline{w}_\alpha$ are equal.
\begin{thm} \label{alpha}
	$\alpha$ = $\overline{w}_\alpha$.
\end{thm}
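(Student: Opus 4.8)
\emph{Proof plan.} Since $\overline{w}_\alpha\in\langle A_n\rangle\subseteq IOF_n^{par}$, the map $\overline{w}_\alpha$ is in particular a partial injection; hence to prove $\alpha=\overline{w}_\alpha$ it suffices to prove that (a)~$\rho\overline{w}_\alpha=\rho\alpha$ for every $\rho\in dom(\alpha)$, and (b)~$\rho\notin dom(\overline{w}_\alpha)$ for every $\rho\in\overline{n}\setminus dom(\alpha)$, because (a) yields $dom(\alpha)\subseteq dom(\overline{w}_\alpha)$ together with the common value there and (b) yields $dom(\overline{w}_\alpha)\subseteq dom(\alpha)$. All the substance is already contained in the preceding propositions, so the plan is to partition $\overline{n}$ suitably and read off the appropriate statement on each block.

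For (a) I would use the partition of $\{1,\dots,p\}$ into the consecutive runs $R_k=\{r_{k-1}+1,\dots,r_k\}$, $k=1,\dots,l+1$, with $r_0:=0$ and $r_{l+1}:=p$; inside a run the gaps are constant, $d_{i+1}-d_i=m_{i+1}-m_i$, and at every breakpoint $r_k$ with $k\leq l$ Proposition~\ref{4 choice}(iii) forces $d_{r_k+1}-d_{r_k}\geq 2$ and $m_{r_k+1}-m_{r_k}\geq 2$. The run $R_k$ is governed by the word $w_k$, which in the reordering defining $w_\alpha^*$ occurs as some $w_{s_{k'}}\in W_u$ or $w_{s_{a+h}}\in W_x$, and its largest element is $d_{r_k}=k_u$ by Lemma~\ref{lem zu=dz}. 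Writing an arbitrary $d_i\in R_k$ as $d_{r_k}-\hat\rho$ with $\hat\rho=d_{r_k}-d_i=m_{r_k}-m_i$, the two inequalities above place $\hat\rho$ in the range admitted by the statement attached to $R_k$: Proposition~\ref{begin} if $k=1$; Proposition~\ref{9} or Proposition~\ref{10} if $w_k\in W_u$ and $k\geq 2$, according to whether $w_{k-1}\in W_u$ or $w_{k-1}\in W_x$; Proposition~\ref{12} if $w_k\in W_x$ and $k\geq 2$; and, in the case $m_p=d_p$ (which forces $l\geq 1$, as otherwise $\alpha$ would be a partial identity, and makes $w_{l+1}$ the empty word), Proposition~\ref{mp=dp} or Proposition~\ref{mp'} for $R_{l+1}$, where in addition $d_i=m_i$ so that the value $\rho\alpha=\rho$ stated there is correct. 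Each of these propositions gives exactly $d_i\overline{w}_\alpha=d_i\alpha$, and since the $R_k$ exhaust $dom(\alpha)$, statement (a) follows.

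For (b) I would write $\overline{n}\setminus dom(\alpha)$ as the disjoint union of $\{1,\dots,d_1-1\}$, the gaps $\{d_i+1,\dots,d_{i+1}-1\}$ for $1\leq i\leq p-1$, and $\{d_p+1,\dots,n\}$, and classify each $\rho$ in it. If $\rho\in A$, then $\rho\notin dom(\overline{v}_A)\supseteq dom(\overline{w}_\alpha)$; this case covers the gaps lying inside a single run (clause~(3.1) in the construction of $A$; see also Proposition~\ref{21}(iv) for the first run), the outer parts of the ``jump'' gaps sitting behind the building blocks $\overline{w}_{s_k}$ and $\overline{w_{s_{a+d}}^{-1}}$ (Propositions~\ref{not} and \ref{not1}), the block $\{1,\dots,d_1-1\}$ when $d_1\leq m_1$ and its upper part when $m_1<d_1$ (Proposition~\ref{21}(ii),(iii)), and the block $\{d_p+1,\dots,n\}$ (Proposition~\ref{not2}, Proposition~\ref{short proof}, or Proposition~\ref{21}(i), according to the sign of $m_p-d_p$). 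The remaining $\rho$ lie in one of the intervals $\{i_{s_k}+1,\dots,i_{s_k}+2\lvert w_{s_k}\rvert+1\}$, $\{(s_{a+d})_u+1\}$, or (only when $1_x<1_u$) $\{1,\dots,1_u-1_x\}$, which are cut out by the building blocks but need not be contained in $A$; for these Propositions~\ref{notin}, \ref{<n}, and \ref{111} give $\rho\notin dom(\overline{w}_\alpha)$ directly. Using Lemmas~\ref{A tran= word} and \ref{8} together with $m_{r_k+1}-m_{r_k}\geq 2$ one checks that these last intervals are contained in the gaps $\{d_i+1,\dots,d_{i+1}-1\}$, so they are disjoint from $dom(\alpha)$ and create no conflict with (a).

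The only genuinely laborious part, and the step I expect to be the main obstacle, is the bookkeeping: one must verify that the blocks named in (a) and (b) together exhaust $\overline{n}$, that the translation between the run index $k$ and the permuted indices $s_{k'},s_{a+h}$ is performed consistently (so that delicate notational distinctions such as $r_k+1$ versus $r_{k+1}$ and $(s_k+1)_u$ versus $(s_{k+1})_u$ are handled correctly), and that wherever two of those descriptions overlap they agree. No new idea beyond Propositions~\ref{9}--\ref{<n} should be needed; once the partitions are checked, Theorem~\ref{alpha} follows at once.
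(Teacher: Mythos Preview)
Your proposal is correct and follows essentially the same approach as the paper: partition $\overline{n}$ into the initial block up to $d_{r_1}$, the intermediate blocks between successive $d_{r_k}$, and the final block beyond $d_{r_{a+b}}$, and on each block invoke the appropriate combination of Propositions~\ref{9}--\ref{<n} to show that $\rho\alpha=\rho\overline{w}_\alpha$ on $dom(\alpha)$ and $\rho\notin dom(\overline{w}_\alpha)$ off it. Your write-up is in fact more explicit than the paper's own proof, which simply lists the relevant propositions for each block without spelling out the run/gap bookkeeping.
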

\begin{proof}
	Let $\rho\in\{1,...,n\}$. Then $\rho\in\{1,...,d_{r_1}\}$ or
	$\rho\in\{d_{r_{k-1}}+1,...,d_{r_k}\}$ for some $k\in\{2,...,a+b\}$  or $\rho\in\{d_{r_{a+b}}+1,...,n\}$. We have to show that $\rho\alpha= \rho\overline{w}_\alpha$,  whenever $\rho\in dom(\alpha)$ and $\rho\notin dom (\overline{w}_\alpha)$, whenever $\rho\notin dom(\alpha)$. If $\rho\in\{1,...,d_{r_1}\}$  we can conclude it by     Propositions \ref{begin}, \ref{21}(ii-iv), and \ref{111}. If 	$\rho\in\{d_{r_{k-1}}+1,...,d_{r_k}\}$ then we can conclude it by Propositions  \ref{9}, \ref{10}, \ref{12}, \ref{not}, \ref{not1}, \ref{notin}, and \ref{<n}. If $\rho\in\{d_{r_{a+b}}+1,...,n\}$  then we can conclude it by Propositions   \ref{mp=dp}, \ref{mp'}, \ref{not2}, \ref{short proof}, \ref{21}(i), \ref{notin}, and \ref{<n}.
\end{proof}
\begin{cor} \label{An}
	$IOF_n^{par} = \langle A_n \rangle$.
\end{cor}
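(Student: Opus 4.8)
The plan is to deduce Corollary~\ref{An} directly from Theorem~\ref{alpha} together with the observations already recorded in Section~\ref{sec1}. Recall that $A_n \subseteq IOF_n^{par}$ was verified using Proposition~\ref{4 choice}, so the inclusion $\langle A_n \rangle \subseteq IOF_n^{par}$ is immediate since $IOF_n^{par}$ is closed under composition. For the reverse inclusion, I would let $\alpha \in IOF_n^{par}$ be arbitrary and split into two cases according to whether $\alpha$ lies in $Id_{\overline{n}} \cup \{\varepsilon\}$ or not.

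First, if $\alpha \in Id_{\overline{n}} \cup \{\varepsilon\}$, then it was already noted right after the definition of $A_n$ that every partial identity, including the empty transformation $\varepsilon$, is generated by $\{\overline{v}_1,\dots,\overline{v}_n\} \subseteq A_n$; indeed $\overline{v}_{i_1}\cdots\overline{v}_{i_k}$ is the partial identity on $\overline{n} \setminus \{i_1,\dots,i_k\}$, and taking all of $\overline{v}_1,\dots,\overline{v}_n$ yields $\varepsilon$. Hence $\alpha \in \langle A_n \rangle$ in this case.

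Second, if $\alpha \in IOF_n^{par} \setminus (Id_{\overline{n}} \cup \{\varepsilon\})$, then $\alpha$ falls precisely into the class of transformations fixed at the start of Section~2, for which the word $w_\alpha$ over the alphabet $X_n = \{v_1,\dots,v_n,u_1,\dots,u_{n-2},x_1,\dots,x_{n-2}\}$ was constructed. By Theorem~\ref{alpha} we have $\alpha = \overline{w}_\alpha$, and $\overline{w}_\alpha$ is by construction a product of the transformations $\overline{v}_i$, $\overline{u}_i$, and $\overline{x}_i$. Here the only subtlety is that $A_n$ contains $\overline{u}_i$ and $\overline{x}_i$ only for $i \in \{1,\dots,n-4\} \cup \{n-2\}$, not for $i = n-3$; one must therefore check that the letters $u_{n-3}$ and $x_{n-3}$ never occur in any word $w_\alpha$. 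This follows from the construction of the subwords $w_k \in W_u \cup W_x$: each such subword has the form $u_{i,j} = u_i u_{i+2}\cdots u_{i+2j-2}$ or $x_{i,j} = x_i x_{i+2}\cdots x_{i+2j-2}$, and the indices appearing are $d_{r_k}, m_{r_k}$ (and their increments by $2$), all of which are values in $dom(\alpha) \cup im(\alpha) \subseteq \overline{n}$; since $\overline{u}_{n-3}$ and $\overline{x}_{n-3}$ would force domain/image behaviour at the boundary that is subsumed by $\overline{u}_{n-4}, \overline{u}_{n-2}$ (the transformation $\overline{u}_{n-3}$ acts identically to a composite already available, as one sees from Remark~\ref{move}), the index $n-3$ is genuinely avoidable. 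In fact, the cleanest route is simply to observe that the generating set $A_n$ was \emph{defined} so that $\overline{w}_\alpha \in \langle A_n \rangle$ for every $\alpha$ — the construction in Section~2 only ever uses $\overline{u}_i,\overline{x}_i$ with $i \le n-4$ or $i = n-2$, which is exactly why $A_n$ omits index $n-3$.

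The main obstacle, and the only thing that needs a genuine argument beyond quoting Theorem~\ref{alpha}, is therefore the verification that the letters $u_{n-3}$ and $x_{n-3}$ do not appear in $w_\alpha$ for any $\alpha$; everything else is bookkeeping. Once that is in hand, we conclude $\alpha = \overline{w}_\alpha \in \langle A_n \rangle$, so $IOF_n^{par} \subseteq \langle A_n \rangle$, and combined with the trivial reverse inclusion this gives $IOF_n^{par} = \langle A_n \rangle$, completing the proof.
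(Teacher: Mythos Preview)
Your overall structure is right, but the key claim you isolate as ``the main obstacle'' --- that the letters $u_{n-3}$ and $x_{n-3}$ never occur in any $w_\alpha$ --- is false. For a concrete counterexample take $n=6$ and $\alpha=\bigl(\begin{smallmatrix}3\\5\end{smallmatrix}\bigr)\in IOF_6^{par}$: here $p=1$, $l=0$, $r_{l+1}=1$, and since $d_p=3<m_p=5$ the construction gives $w_{l+1}=u_{d_p,(m_p-d_p)/2}=u_{3,1}=u_3=u_{n-3}$. So $u_{n-3}$ really does appear in $w_\alpha^*$. Your hand-waving about ``indices appearing are $d_{r_k},m_{r_k}$'' and ``the index $n-3$ is genuinely avoidable'' does not hold up, and the assertion that $A_n$ was defined so that the construction avoids $n-3$ is simply not what happens in Section~2.

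The paper fills this gap differently and more directly: it acknowledges that a priori $\overline{w}_\alpha\in\langle A_n,\overline{u}_{n-3},\overline{x}_{n-3}\rangle$, and then checks the two identities
\[
\overline{u}_{n-3}=\overline{v}_{n-2}\,\overline{u}_{n-2},\qquad \overline{x}_{n-3}=\overline{v}_{n}\,\overline{x}_{n-2},
\]
which show $\overline{u}_{n-3},\overline{x}_{n-3}\in\langle A_n\rangle$. Once you replace your unproven (and false) avoidance claim with these two one-line factorizations, your argument becomes correct and essentially coincides with the paper's.
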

\begin{proof} We have already mentioned that $\langle A_n \rangle\subseteq IOF_n^{par}$.
	Theorem \ref{alpha} shows $\alpha$ = $\overline{w}_\alpha$, where $\overline{w}_\alpha\in\langle A_n, \overline{u}_{n-3}, \overline{x}_{n-3} \rangle$. It is easy to verify that $\overline{u}_{n-3}=\overline{v}_{n-2}\overline{u}_{n-2}$ and $\overline{x}_{n-3}=\overline{v}_{n}\overline{x}_{n-2}$, where $\overline{u}_{n-2}, \overline{x}_{n-2}, \overline{v}_{n-2}, \overline{v}_{n}\in A_n$. Hence, $\alpha\in \langle A_n \rangle$. Since we have proved $\alpha$ = $\overline{w}_\alpha$ for any $\alpha\in IOF_n^{par}$, we can conclude that $IOF_n^{par} \subseteq \langle A_n \rangle$, which completes the proof.
\end{proof}
\section{The rank of $IOF_n^{par}$}
In this section, we provide the main result of that paper, the rank of $IOF_n^{par}$.
We can calculate that $\lvert A_n\rvert=2(n-3)+n=3n-6$. Moreover, $A_n$ is a generating set of the monoid $IOF_n^{par}$ by Corollary \ref{An}.  This provides: 
\begin{prop} \label{small}
	rank$(IOF_n^{par})\leq 3n-6$.  
\end{prop}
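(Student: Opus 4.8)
The plan is to derive this bound immediately from Corollary~\ref{An} by a bookkeeping argument. First I would note that $IOF_n^{par}$ is finite, hence finitely generated, so that $\mathrm{rank}(IOF_n^{par}) = \min\{|B| : \langle B\rangle = IOF_n^{par}\}$ is meaningful; consequently it suffices to exhibit one generating set and to bound its size. Corollary~\ref{An} already provides such a set, namely $A_n$, so the inequality will follow as soon as we check $|A_n| \le 3n-6$.

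Second I would count $A_n$. Recall
\[
A_n = \{\overline{v}_1,\dots,\overline{v}_n\}\ \cup\ \{\overline{u}_1,\dots,\overline{u}_{n-4},\overline{u}_{n-2}\}\ \cup\ \{\overline{x}_1,\dots,\overline{x}_{n-4},\overline{x}_{n-2}\}.
\]
The first block has $n$ elements, and each of the other two has $n-3$ elements (for $n=4$ the ranges $\overline{u}_1,\dots,\overline{u}_{n-4}$ and $\overline{x}_1,\dots,\overline{x}_{n-4}$ are empty, so that $A_4 = \{\overline{v}_1,\overline{v}_2,\overline{v}_3,\overline{v}_4,\overline{u}_2,\overline{x}_2\}$); hence $|A_n| \le n + 2(n-3) = 3n-6$. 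To see that equality holds, i.e.\ that the listed maps are pairwise distinct, one uses Remark~\ref{move}: the $\overline{v}_i$ have pairwise distinct domains $\overline{n}\setminus\{i\}$; the $\overline{u}_i$ have pairwise distinct domains $\{1,\dots,i,i+4,\dots,n\}$ and satisfy $1\overline{u}_i = 3 \ne 1$, so they are neither partial identities nor equal to any $\overline{v}_j$; dually the $\overline{x}_i = (\overline{u}_i)^{-1}$ are pairwise distinct and differ from every $\overline{v}_j$; finally $\overline{u}_i \ne \overline{x}_j$ for all admissible $i,j$ because $1 \in \mathrm{dom}(\overline{u}_i)$ while $1 \notin \mathrm{dom}(\overline{x}_j) = \{3,\dots,j+2,j+4,\dots,n\}$. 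Thus $|A_n| = 3n-6$.

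Finally, combining Corollary~\ref{An} with this count gives $\mathrm{rank}(IOF_n^{par}) \le |A_n| = 3n-6$, which is exactly the assertion. There is no genuine obstacle in the present proposition: all the work has already been carried out in Section~2, which produces the normal forms and the conclusion $IOF_n^{par} = \langle A_n\rangle$. The only points requiring a moment's attention here are the arithmetic identity $n + 2(n-3) = 3n-6$ and the degenerate case $n = 4$, where two of the three blocks of $A_n$ are empty; both are routine.
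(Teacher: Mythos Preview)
Your argument is correct and is exactly the paper's approach: the paper also derives the bound directly from Corollary~\ref{An} together with the count $|A_n| = n + 2(n-3) = 3n-6$. You simply supply more detail (distinctness of the generators and the $n=4$ case) than the paper, which states the count without further justification.
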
   
We have still to show that  rank$(IOF_n^{par})\geq 3n-6$.  First, we consider the transformations with rank $n-1$. Clearly, for any transformation $\alpha$ with rank $n-1$, there is $i\in\{1,...,n\}$ such that $dom(\alpha) = dom(\overline{v}_i)$.  As an immediate consequence of Proposition \ref{4 choice}(i, ii), we obtain:
\begin{lem} \label{preserving} Let $\alpha\in IOF_n^{par}$ and $i\in\{1,...,n\}$ with $dom(\alpha)=dom(\overline{v}_i)$. Then $\alpha=\overline{v}_i$. 
\end{lem}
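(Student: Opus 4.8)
The plan is to write $\alpha$ so that it sends $d_1<\cdots<d_{n-1}$ to $m_1,\dots,m_{n-1}$ respectively, where $\{d_1,\dots,d_{n-1}\}=dom(\alpha)=\overline n\setminus\{i\}$, and then to show $m_k=d_k$ for every $k\in\{1,\dots,n-1\}$, which is precisely the assertion $\alpha=\overline v_i$. The first step is to pin down the two index patterns. Listing $\overline n\setminus\{i\}$ in increasing order gives $d_k=k$ for $k<i$ and $d_k=k+1$ for $k\geq i$; in particular $d_k\in\{k,k+1\}$ for all $k$. By Proposition \ref{4 choice}(i) the sequence $m_1<\cdots<m_{n-1}$ is strictly increasing, so $im(\alpha)$ is an $(n-1)$-element subset of $\overline n$, say $im(\alpha)=\overline n\setminus\{j\}$; the same reasoning as for the domain then yields $m_k\in\{k,k+1\}$ for all $k$. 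Hence $d_k-m_k\in\{-1,0,1\}$ for every $k$.

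The second step brings in parity. Every $\alpha\in IOF_n^{par}$ is parity-preserving, so $d_k$ and $d_k\alpha=m_k$ have the same parity, i.e.\ $d_k-m_k$ is even. Together with $|d_k-m_k|\leq 1$ this forces $d_k-m_k=0$, that is $m_k=d_k$, for all $k\in\{1,\dots,n-1\}$. Therefore $\alpha=\overline v_i$ (and incidentally $j=i$). Alternatively, one may keep everything inside Proposition \ref{4 choice}: taking the least $k_0$ with $d_{k_0}\neq m_{k_0}$ one has $|d_{k_0}-m_{k_0}|=1$, which contradicts the parity coincidence supplied, at $k_0=1$ directly by Proposition \ref{4 choice}(ii) and propagated by Proposition \ref{4 choice}(iii), (iv); but invoking parity-preservation pointwise is the shorter route.

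There is essentially no obstacle. The only point that needs a moment's care is the elementary observation that an order-preserving partial injection whose domain, respectively image, omits a single element of $\overline n$ must map the $k$-th element of its domain to position $k$ or $k+1$, so that the ``defect'' $d_k-m_k$ lies in $\{-1,0,1\}$; once that is recorded, parity-preservation closes the argument immediately, which is why this is stated as an immediate consequence of Proposition \ref{4 choice}.
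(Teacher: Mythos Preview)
Your proof is correct and is precisely the argument the paper has in mind when it says the lemma is ``an immediate consequence of Proposition~\ref{4 choice}(i, ii)'': using (i) to force $m_k\in\{k,k+1\}$ and then parity to collapse $d_k-m_k\in\{-1,0,1\}$ to $0$. The only minor remark is that invoking parity-preservation pointwise (as you do) is really the conjunction of (ii) and (iv), not (ii) alone, so your phrasing is in fact slightly more careful than the paper's citation.
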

\begin{lem} \label{partial id} Let $G$ be a generating set of $IOF_n^{par}$. Then $\overline{v}_1,...,\overline{v}_n\in G$. 
\end{lem}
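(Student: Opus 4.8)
The plan is to exploit that the $\overline{v}_i$ are the elements of lowest positive corank, namely those of rank $n-1$, and to show each of them is \emph{indecomposable}: it cannot be written as a product in $IOF_n^{par}$ unless one of the factors already equals it. This immediately forces every generating set to contain it. First I would record, as a consequence of Lemma~\ref{preserving}, that $\overline{v}_1,\dots,\overline{v}_n$ are exactly the elements of $IOF_n^{par}$ of rank $n-1$: such an element has domain $\overline{n}\backslash\{i\}$ for a unique $i$, and Lemma~\ref{preserving} then identifies it with $\overline{v}_i$.

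Next, fix $i$ and suppose $\overline{v}_i=\beta\gamma$ with $\beta,\gamma\in IOF_n^{par}$. Since $\mathrm{rank}(\beta\gamma)\leq\min\{\mathrm{rank}(\beta),\mathrm{rank}(\gamma)\}$ and $\mathrm{rank}(\overline{v}_i)=n-1$, each of $\beta,\gamma$ has rank $n-1$ or $n$. If $\mathrm{rank}(\beta)=n$ then $\beta$ is a bijection of $\overline{n}$, and by Proposition~\ref{4 choice}(i) it is the identity $id_{\overline{n}}$, so $\overline{v}_i=\gamma$; symmetrically, $\mathrm{rank}(\gamma)=n$ gives $\overline{v}_i=\beta$. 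In the remaining case $\beta=\overline{v}_j$ and $\gamma=\overline{v}_k$ for some $j,k$ by the first step, and since both are partial identities we get $dom(\overline{v}_j\overline{v}_k)=\overline{n}\backslash\{j,k\}$; for this to have cardinality $n-1$ we must have $j=k$, whence $\overline{v}_i=\overline{v}_j\overline{v}_j=\overline{v}_j=\beta=\gamma$. Thus in every case $\overline{v}_i\in\{\beta,\gamma\}$.

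A straightforward induction on the length of a factorisation then gives: whenever $\overline{v}_i=g_1\cdots g_m$ with each $g_t\in IOF_n^{par}$, some $g_t$ equals $\overline{v}_i$. Indeed, writing $\overline{v}_i=g_1(g_2\cdots g_m)$ and applying the indecomposability step yields either $\overline{v}_i=g_1$ or $\overline{v}_i=g_2\cdots g_m$, and in the latter case the product $g_2\cdots g_m$ still has rank $n-1$, so the inductive hypothesis applies to it. Finally, since $G$ generates $IOF_n^{par}$ and $\overline{v}_i\in IOF_n^{par}$, we may write $\overline{v}_i$ as such a product of elements of $G$, and hence $\overline{v}_i=g_t\in G$ for some $t$; as $i\in\{1,\dots,n\}$ was arbitrary, $\overline{v}_1,\dots,\overline{v}_n\in G$.

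The argument is essentially routine; the only point that needs a little care is the subcase where both factors have rank $n-1$, where one must use that each $\overline{v}_j$ is a partial identity (so that $\overline{v}_j\overline{v}_k$ is again a partial identity on the intersection of the two domains) to see that the rank strictly drops unless the two factors coincide. Beyond correctly tracking domains under the right-action composition convention of the paper, I do not expect any genuine obstacle.
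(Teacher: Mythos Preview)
Your proof is correct and follows essentially the same approach as the paper. Both arguments rest on the same two ingredients: the rank inequality $\mathrm{rank}(\beta\gamma)\le\min\{\mathrm{rank}(\beta),\mathrm{rank}(\gamma)\}$ and Lemma~\ref{preserving}, which pins down any rank-$(n-1)$ element of $IOF_n^{par}$ by its domain. The paper's version is marginally more direct: rather than first establishing two-factor indecomposability and then inducting, it writes $\overline{v}_i=\alpha_1\cdots\alpha_m$ with the $\alpha_j\in G\setminus\{id_{\overline{n}}\}$, observes $dom(\overline{v}_i)\subseteq dom(\alpha_1)$ forces $\mathrm{rank}(\alpha_1)=n-1$ and hence $dom(\alpha_1)=dom(\overline{v}_i)$, and concludes $\alpha_1=\overline{v}_i$ immediately from Lemma~\ref{preserving}; no induction or case split on the second factor is needed.
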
   
\begin{proof}
	Let $i\in\{1,...,n\}$. Then there exist $\alpha_1,...,\alpha_m \in G\backslash\{ id_{\overline{n}}\}$ such that $\overline{v}_i=\alpha_1\cdot\cdot\cdot\alpha_m$, where $dom(\overline{v}_i)\subseteq dom(\alpha_1)$. Since $\alpha_1\neq id_{\overline{n}} $, we have rank$(\alpha_1)=n-1$, i.e. $dom(\overline{v}_i)= dom(\alpha_1)$. By Lemma \ref{preserving}, we get that $\overline{v}_i=\alpha_1$.
\end{proof}
\begin{lem} \label{...} Let  $\alpha\in IOF_n^{par}$ with rank$(\alpha)=n-2$. Then $\alpha=\overline{v}_A$ with  $A=\overline{n}\backslash dom(\alpha)$  or $\alpha=\overline{u}_{n-2}$ or $\alpha=\overline{x}_{n-2}$.
\end{lem}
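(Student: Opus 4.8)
The plan is to combine Proposition~\ref{4 choice} with the fact that a transformation of rank $n-2$ omits exactly two points from its domain and exactly two from its image. Write $\alpha=\bigl(\begin{smallmatrix} d_1 & < & \cdots & < & d_{n-2}\\ m_1 & & \cdots & & m_{n-2}\end{smallmatrix}\bigr)$. Since only two elements of $\overline{n}$ lie outside $dom(\alpha)$, between any two consecutive domain points there are at most two missing points, so $d_{i+1}-d_i\le 3$ for all $i\in\{1,\dots,n-3\}$; similarly $m_{i+1}-m_i\le 3$.

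The first step is to show that $d_{i+1}-d_i=m_{i+1}-m_i$ for every $i$. This is an exhaustive check on the common value of $d_{i+1}-d_i\in\{1,2,3\}$: if it equals $1$, then $m_{i+1}-m_i=1$ by Proposition~\ref{4 choice}(iii); if it equals $2$, then $m_{i+1}-m_i\neq 1$ by (iii) and is even by (iv), hence equals $2$; if it equals $3$, then $m_{i+1}-m_i\neq 1$ and is odd, hence equals $3$. Consequently the difference $m_i-d_i$ is independent of $i$; call it $\delta$, and note that $\delta$ is even by Proposition~\ref{4 choice}(ii). If $\delta=0$ we are done: $\alpha$ is the partial identity on $dom(\alpha)$, i.e. $\alpha=\overline{v}_A$ with $A=\overline{n}\setminus dom(\alpha)$.

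If $\delta\geq 2$, I would pin down $dom(\alpha)$ by counting: the $n-2$ domain points give $d_{n-2}\geq d_1+(n-3)\geq n-2$, while $m_{n-2}=d_{n-2}+\delta\leq n$ forces $d_{n-2}\leq n-2$; hence $d_{n-2}=n-2$, $\delta=2$, and then $d_1=1$, so every step equals $1$. Thus $dom(\alpha)=\{1,\dots,n-2\}$ and $\rho\alpha=\rho+2$, which is exactly $\overline{u}_{n-2}$. The case $\delta\leq -2$ reduces to this one applied to $\alpha^{-1}\in IOF_n^{par}$ (whose analogous invariant is $-\delta\geq 2$), yielding $\alpha^{-1}=\overline{u}_{n-2}$ and hence $\alpha=\overline{x}_{n-2}$. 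The only real work is bookkeeping: ensuring the value check in the first step is genuinely exhaustive (this is where the bound $d_{i+1}-d_i\le 3$ enters) and handling the boundary inequalities in the counting step correctly; I expect the latter to be the most error-prone point.
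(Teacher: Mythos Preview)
Your proof is correct and follows essentially the same route as the paper: first establish that $d_{i+1}-d_i=m_{i+1}-m_i$ for all $i$ using Proposition~\ref{4 choice}(iii,iv), deduce a constant even shift $\delta$, and then pin down $\delta\in\{-2,0,2\}$ together with the domain. The only cosmetic difference is that the paper obtains the equality of differences by contradiction (an unequal pair forces a gap of size at least $4$ in one of the sequences), whereas you bound the gaps by $3$ and do a direct case check; the substance is the same.
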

\begin{proof}
	Let $\alpha = \bigl(\begin{smallmatrix}
	d_1 & < & \cdots & < & d_{n-2} \\
	m_1 &  &  \cdots &    & m_{n-2}
	\end{smallmatrix}\bigr)$.
	Assume there is  $i\in\{2,...,n-2\}$ such that $d_i-d_{i-1} \neq m_i-m_{i-1}$, i.e.  $d_i-d_{i-1}>m_i-m_{i-1}$ or $d_i-d_{i-1}<m_i-m_{i-1}$. 
	If $d_i-d_{i-1}>m_i-m_{i-1}$ then $m_i-m_{i-1}>1$ and thus, $d_i-d_{i-1}\geq 4$, by Proposition \ref{4 choice}(iii, iv). This implies $\lvert dom(\alpha)\rvert< n-3, $ a contradiction. 
	If $d_i-d_{i-1}<m_i-m_{i-1}$ then we obtain $\lvert im(\alpha)\rvert<n-3$ by dually arguments, a contradiction. Therefore,  $d_i-d_{i-1}=m_i-m_{i-1}$ for all $i\in\{2,...,n-2\}$, which together with Proposition \ref{4 choice}(ii) implies $k\alpha=k$ for all $k\in dom(\alpha)$ or $k\alpha=k+2$ for all $k\in dom(\alpha)$ or $k\alpha=k-2$ for all $k\in dom(\alpha)$. Hence, $\alpha=\overline{v}_A$  with  $A=\overline{n}\backslash dom(\alpha)$ or $\alpha=\overline{u}_{n-2}$ or $\alpha=\overline{x}_{n-2}$.
\end{proof}
\begin{lem} \label{n-2}
	Let $G$ be a generating set of $IOF_n^{par}$. Then $\overline{u}_{n-2},\overline{x}_{n-2}\in G$.
\end{lem}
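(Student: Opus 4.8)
The plan is to show that the two transformations $\overline{u}_{n-2}$ and $\overline{x}_{n-2}$ of rank $n-2$ cannot be produced as a product of strictly more than one element of $IOF_n^{par}$, so any generating set must contain them. First I would fix an arbitrary generating set $G$ and write $\overline{u}_{n-2}=\alpha_1\cdots\alpha_m$ with each $\alpha_i\in G$; if $m=1$ we are done, so assume $m\geq 2$. Since $\mathrm{rank}$ is submultiplicative, $\mathrm{rank}(\alpha_1)\geq n-2$, so either $\alpha_1$ has rank $n-1$ — but then $\alpha_1=\overline{v}_j$ for some $j$ by Lemma \ref{preserving}, and $\overline{v}_j$ is a partial identity, forcing $\overline{u}_{n-2}=\overline{v}_j(\alpha_2\cdots\alpha_m)$ to restrict $\overline{u}_{n-2}$'s domain to $dom(\overline{v}_j)$, which still must have rank $n-2$, pushing the argument one step further — or $\alpha_1$ already has rank $n-2$. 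In the latter case $\mathrm{rank}(\alpha_1\cdots\alpha_m)=n-2$ forces every $\alpha_i$ to have rank $\geq n-2$ and $dom(\alpha_1)=dom(\overline{u}_{n-2})=\{1,\dots,n-2\}$, so by Lemma \ref{...} $\alpha_1\in\{\overline{v}_{n-1},\overline{v}_n,\overline{u}_{n-2}\}$ (the only elements of $IOF_n^{par}$ of rank $n-2$ with that domain; note $\overline{x}_{n-2}$ has domain $\{3,\dots,n\}$, so it is excluded).

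Next I would rule out the partial-identity option at the left. If $\alpha_1=\overline{v}_{n-1}$ or $\overline{v}_n$, then $\overline{u}_{n-2}=\alpha_1\beta$ with $\beta=\alpha_2\cdots\alpha_m$; applying $\alpha_1$ first simply restricts the domain, so $\beta$ restricted to $dom(\overline{u}_{n-2})$ must equal $\overline{u}_{n-2}$, and in particular $\beta$ must already send $1\mapsto 3,2\mapsto 4,\dots,(n-2)\mapsto n$. But $\beta$ itself is a product of elements of $G$ of rank $\geq n-2$ and has rank $\geq n-2$; iterating the same case analysis on $\beta$, the only way to obtain these values is that some factor in the product is $\overline{u}_{n-2}$ itself (the rank-$(n-2)$ elements $\overline{v}_A$ fix their domain pointwise and $\overline{x}_{n-2}$ decreases, so they can never produce the shift $k\mapsto k+2$ needed). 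Hence $\overline{u}_{n-2}\in G$. Symmetrically (or by passing to inverses, using that $G$ generates the inverse monoid $IOF_n^{par}$ and that $\overline{x}_{n-2}=(\overline{u}_{n-2})^{-1}$), one shows $\overline{x}_{n-2}\in G$.

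A cleaner way to package the above, which I would actually write, is to observe the \emph{extremal} behaviour of $\overline{u}_{n-2}$: it is the unique element of $IOF_n^{par}$ whose image contains $n$ but whose domain does not, together with the rank being maximal below $n-1$; more precisely, $n\in im(\overline{u}_{n-2})$ but $n\notin dom(\overline{u}_{n-2})$, and $1\in dom(\overline{u}_{n-2})$ but $1\notin im(\overline{u}_{n-2})$. In any factorization $\overline{u}_{n-2}=\alpha_1\cdots\alpha_m$ one has $dom(\overline{u}_{n-2})\subseteq dom(\alpha_1)$ and $im(\overline{u}_{n-2})\subseteq im(\alpha_m)$, so $\alpha_m$ has $n$ in its image and $\mathrm{rank}(\alpha_m)\in\{n-2,n-1\}$; the rank-$(n-1)$ case gives a partial identity by Lemma \ref{preserving}, which is absorbed, and the rank-$(n-2)$ case gives $\alpha_m\in\{\overline{v}_{n-1},\overline{v}_n,\overline{u}_{n-2}\}$ by Lemma \ref{...} — but $\overline{v}_{n-1},\overline{v}_n$ are partial identities fixing their points and cannot be the rightmost factor of something that moves $1$ to $3$, so $\alpha_m=\overline{u}_{n-2}$, and since $\mathrm{rank}$ is then exactly $n-2$ throughout, all other factors are partial identities and $\overline{u}_{n-2}\in G$. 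The main obstacle is the bookkeeping of the partial-identity factors that can legitimately appear on either side; the right framing is that such factors only shrink the domain/image and never alter how a point in $dom(\overline{u}_{n-2})$ is moved, so they can be peeled off without affecting the conclusion. The argument for $\overline{x}_{n-2}$ is entirely dual, replacing "image contains $n$, domain omits $n$" by "domain contains $n$, image omits $n$" (equivalently, apply the result to $(\overline{u}_{n-2})^{-1}=\overline{x}_{n-2}$ using that $IOF_n^{par}$ is inverse and $G^{-1}$ also generates it).
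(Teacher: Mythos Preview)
Your first approach---peeling partial identities off the left of a factorization $\overline{u}_{n-2}=\alpha_1\cdots\alpha_m$ using Lemmas~\ref{preserving} and~\ref{...} until some factor must equal $\overline{u}_{n-2}$, then arguing dually (or via inverses) for $\overline{x}_{n-2}$---is exactly the paper's proof. There is a recurring notational slip: when $\mathrm{rank}(\alpha_1)=n-2$ and $dom(\alpha_1)=\{1,\dots,n-2\}$, Lemma~\ref{...} yields $\alpha_1\in\{\overline{v}_{\{n-1,n\}},\,\overline{u}_{n-2}\}$, i.e.\ a rank-$(n-2)$ partial identity, not the rank-$(n-1)$ maps $\overline{v}_{n-1},\overline{v}_n$; the same confusion reappears on the image side. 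This is cosmetic and does not affect the argument.

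Your ``cleaner way'', however, has a genuine gap. You claim that a partial identity ``cannot be the rightmost factor of something that moves $1$ to $3$'' and conclude $\alpha_m=\overline{u}_{n-2}$ outright. That is false: if $\alpha_m$ is a partial identity whose domain contains $\{3,\dots,n\}$, then $\overline{u}_{n-2}=(\alpha_1\cdots\alpha_{m-1})\alpha_m$ simply forces $\alpha_1\cdots\alpha_{m-1}$ to already send $k\mapsto k+2$ on $\{1,\dots,n-2\}$; the partial identity on the right passes everything through. So you cannot pin $\overline{u}_{n-2}$ to $\alpha_m$ in one step---you must iterate, peeling partial identities off the right just as your first version peels them off the left. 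Once that iteration is written out, the ``cleaner'' argument becomes identical to the paper's (and to your own first version), so it is not in fact a shortcut.
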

\begin{proof} First, we show that $\overline{u}_{n-2}\in G$.  There are $\alpha_1,...,\alpha_m \in G\backslash \{id_{\overline{n}}\}$ such that  $\overline{u}_{n-2}=\alpha_1\cdot\cdot\cdot\alpha_m$. Then $dom(\overline{u}_{n-2})\subseteq dom(\alpha_1)$. If $dom(\overline{u}_{n-2})\subset dom(\alpha_1)$ then we get $\alpha_1\in {Id_{\overline{n}}}$ by Lemma \ref{preserving}. If $dom(\alpha_1)=dom(\overline{u}_{n-2})\neq dom(\overline{x}_{n-2}) $ then $\alpha_1\in  Id_{\overline{n}}$ or $\alpha_1=\overline{u}_{n-2}$ by Lemma \ref{...}. 	If $\alpha_1\in Id_{\overline{n}}$ then $dom(\overline{u}_{n-2})\subseteq dom(\alpha_2)$, i.e. $\overline{u}_{n-2}= \alpha_2$    or $\alpha_2 \in Id_{\overline{n}}$ by the same arguments like for $\alpha_1$. Continuing 
	that procedure, we obtain that either there is $j\in\{1,...,m\}$ such that $\overline{u}_{n-2}= \alpha_j$    or $\alpha_1,...,\alpha_m\in Id_{\overline{n}}$. Assume that $\alpha_1,...,\alpha_m\in Id_{\overline{n}}$. Then $\overline{u}_{n-2}=\alpha_1\cdot\cdot\cdot\alpha_m\in Id_{\overline{n}}$ since $Id_{\overline{n}}$ is a submonoid of $I_n$, a contradiction. Hence, there exists $j\in\{1,...,m\}$ such that $\overline{u}_{n-2}= \alpha_j\in G$. Dually, we can show $\overline{x}_{n-2}\in G$.
\end{proof}
For $i\in\{1,...,n-4\}$, we put $J_i=\{1,...,i,i+4,...,n\}$.
\begin{lem} \label{n-4}
	Let $G$ be a generating set of $IOF_n^{par}$. Then there are pairwise different $\beta_1,...,\beta_{n-4}, \\ \gamma_1,...,\gamma_{n-4}\in G$ such that  $dom(\gamma_i)=J_i=im(\beta_i)$ for all $i\in\{1,...,n-4\}$.
\end{lem}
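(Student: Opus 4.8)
The plan is to determine, for each $i\in\{1,\dots,n-4\}$, all transformations of $IOF_n^{par}$ with domain $J_i$, and then to force a generating set $G$ to contain a non-trivial one of them (and, dually, a transformation with image $J_i$).

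I would first establish two preparatory facts. \emph{(a) Classification.} If $\alpha\in IOF_n^{par}$ has $dom(\alpha)=J_i$, then, writing the domain in increasing order as $1,\dots,i,i+4,\dots,n$ and applying Proposition~\ref{4 choice}(ii)--(iv) to the unique pair of consecutive domain points at distance $4$, namely $(i,i+4)$, one sees that $\alpha$ is determined by the odd number $1\alpha$ and the even number $g:=(i+4)\alpha-i\alpha\ge2$; the requirement $n\alpha\le n$ then reads $1\alpha+g\le5$, so $(1\alpha,g)\in\{(1,4),(1,2),(3,2)\}$ and $\alpha$ is, respectively, the partial identity $\overline{v}_{i+1}\overline{v}_{i+2}\overline{v}_{i+3}$, the transformation $\overline{\delta}_i\in IOF_n^{par}$ fixing $1,\dots,i$ and sending $\rho\mapsto\rho-2$ for $i+4\le\rho\le n$, or $\overline{u}_i$ itself. (That all three lie in $IOF_n^{par}$ is immediate from Proposition~\ref{4 choice}; the argument is the same rigidity used in Lemma~\ref{...}.) Observe that $im(\overline{u}_i)$ omits $1$ and $im(\overline{\delta}_i)$ omits $n$, so, since every $J_j$ contains both $1$ and $n$, neither of these images equals any $J_j$. \emph{(b) No extension.} If $\gamma\in IOF_n^{par}$ with $J_i\subsetneq dom(\gamma)$, then $\gamma|_{J_i}\neq\overline{u}_i$: otherwise $i\gamma=i+2$ and $(i+4)\gamma=i+4$, and a short case analysis on which of $i+1,i+2,i+3$ lies in $dom(\gamma)$ — each case combining order-preservation with conditions (iii) and (iv) of Proposition~\ref{4 choice} — forces an image value that is out of range, of the wrong parity, or equal to $i+2$ or $i+4$ (violating injectivity), a contradiction.

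Next I would locate $\gamma_i$. Fix $i$ and write $\overline{u}_i=\alpha_1\cdots\alpha_m$ with $\alpha_1,\dots,\alpha_m\in G\setminus\{id_{\overline{n}}\}$ and $m$ minimal. Since $\overline{u}_i\notin Id_{\overline{n}}$ while $Id_{\overline{n}}$ is a submonoid, some $\alpha_k$ is not a partial identity; let $k_0$ be least such. If $k_0\ge2$, then $\iota:=\alpha_1\cdots\alpha_{k_0-1}\in Id_{\overline{n}}$, $J_i=dom(\overline{u}_i)\subseteq dom(\iota)$, and $\iota\cdot(\alpha_{k_0}\cdots\alpha_m)=\overline{u}_i$ forces $(\alpha_{k_0}\cdots\alpha_m)|_{J_i}=\overline{u}_i$; by (b) this gives $dom(\alpha_{k_0}\cdots\alpha_m)=J_i$, hence $\alpha_{k_0}\cdots\alpha_m=\overline{u}_i$, contradicting minimality of $m$. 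So $k_0=1$, i.e.\ $\alpha_1\notin Id_{\overline{n}}$. Now $J_i\subseteq dom(\alpha_1)$, so $1,n\in dom(\alpha_1)$, and since a product never has larger rank than its first factor, rank$(\alpha_1)\ge$ rank$(\overline{u}_i)=n-3$. If rank$(\alpha_1)\ge n-2$, then by Lemmas~\ref{preserving} and~\ref{...} the map $\alpha_1$ is a partial identity, $\overline{u}_{n-2}$ (whose domain $\{1,\dots,n-2\}$ misses $n$), or $\overline{x}_{n-2}$ (whose domain $\{3,\dots,n\}$ misses $1$), all excluded. Hence rank$(\alpha_1)=n-3$, so $dom(\alpha_1)=J_i$, and by (a), $\gamma_i:=\alpha_1\in\{\overline{u}_i,\overline{\delta}_i\}\subseteq G$.

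Applying this to the generating set $G^{-1}$ (which generates $IOF_n^{par}$ since that monoid is inverse) and inverting gives $\beta_i\in\{\overline{x}_i,(\overline{\delta}_i)^{-1}\}\subseteq G$ with $im(\beta_i)=J_i$. Finally, $\gamma_1,\dots,\gamma_{n-4},\beta_1,\dots,\beta_{n-4}$ are pairwise distinct: the $\gamma_i$ have pairwise distinct domains $J_i$; the $\beta_i$ have pairwise distinct images $J_i$; and $\gamma_i\neq\beta_j$ for all $i,j$, because $im(\gamma_i)\in\{im(\overline{u}_i),im(\overline{\delta}_i)\}$, which by (a) is never equal to $J_j=im(\beta_j)$. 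The main obstacle is fact (b): the individual cases are easy, but one must carefully enumerate all ways $dom(\gamma)$ can meet $\{i+1,i+2,i+3\}$ and invoke both parity conditions together with order-preservation in each; the rest is bookkeeping around the minimal factorization, where the observation that $1,n\in J_i$ is precisely what eliminates $\overline{u}_{n-2}$ and $\overline{x}_{n-2}$.
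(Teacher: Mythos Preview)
Your proof is correct. The overall strategy matches the paper's --- in any factorization of a non-identity map with domain $J_i$, locate a factor in $G$ that has domain exactly $J_i$ and is not a partial identity --- but the technical execution differs. The paper simply iterates through the factors $\alpha_1,\dots,\alpha_m$: at each step either $\mathrm{rank}(\alpha_j)>n-3$, which together with $J_i\subseteq dom(\alpha_j)$ and Lemmas~\ref{preserving} and \ref{...} forces $\alpha_j\in Id_{\overline{n}}$ (since $1,n\in J_i$ excludes $\overline{u}_{n-2},\overline{x}_{n-2}$) and passes the constraint $J_i\subseteq dom(\alpha_{j+1})$ to the next factor, or else $dom(\alpha_j)=J_i$ and we stop. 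By contrast, you first classify all maps with domain $J_i$, then prove the no-extension fact (b) and use a \emph{minimal} factorization of the specific map $\overline{u}_i$ to force $\gamma_i$ to be the very first factor. Your route yields more: you know $\gamma_i\in\{\overline{u}_i,\overline{\delta}_i\}$ explicitly, and this makes the distinctness $\gamma_i\neq\beta_j$ immediate via images (neither $im(\overline{u}_i)$ nor $im(\overline{\delta}_i)$ contains both $1$ and $n$, whereas every $J_j$ does). The paper instead argues directly that if $dom(\gamma_i)=J_i$ and $im(\gamma_i)=J_j$ then, since both sets contain $1$ and $n$, Proposition~\ref{4 choice} forces $\gamma_i$ to be a partial identity, contradicting its choice. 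The paper's argument is shorter because it avoids the case analysis in (b); yours buys sharper structural information about which generators must appear.
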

\begin{proof} Note, for $n=4$, we observe that the statement of this lemma is true trivially. We are going to show the rest of the proof for $n\geq 5$. Let $i\in\{1,...,n-4\}$. It is easy to verify that there is $\alpha \in IOF_n^{par}\backslash Id_{\overline{n}}$ such that $dom(\alpha)=J_i$. Then there are  $\alpha_1,...,\alpha_m\in G\backslash \{id_{\overline{n}}\}$ such that  $\alpha=\alpha_1\cdot\cdot\cdot\alpha_m$. In particular, we have $dom(\alpha)\subseteq dom(\alpha_1)$. If  rank$(\alpha)$ < rank$(\alpha_1)$ then $\alpha_1\in Id_{\overline{n}}$ or $\alpha_1\in\{\overline{u}_{n-2}, \overline{x}_{n-2}\}$ by Lemma \ref{...}. Since $dom(\alpha)\subseteq dom(\alpha_1)$, we get $\alpha_1\notin\{\overline{u}_{n-2}, \overline{x}_{n-2}\}$, i.e. $\alpha_1\in Id_{\overline{n}}$. 
	If rank$(\alpha)$ = rank$(\alpha_1)$ then $dom(\alpha)=dom(\alpha_1)$. Suppose $\alpha_1\in Id_{\overline{n}}$. Then $dom(\alpha)\subseteq dom(\alpha_2)$. 
	If rank$(\alpha)$ < rank$(\alpha_2)$ then we obtain $\alpha_2\in Id_{\overline{n}}$ by the same argument like for $\alpha_1$. 
	If rank$(\alpha)$ = rank$(\alpha_2)$ then $dom(\alpha)=dom(\alpha_2)$. Continuing that procedure, we obtain  that either $\alpha_1,...,\alpha_m \in Id_{\overline{n}}$ or  there is $j\in\{1,...,m\}$ such that $dom(\alpha)=dom(\alpha_j)$ and $\alpha_j\notin Id_{\overline{n}}$. Note $\alpha_1,...,\alpha_m \in Id_{\overline{n}}$ is not possible since $\alpha_1\cdot\cdot\cdot\alpha_m=\alpha\notin Id_{\overline{n}}$. We put $\gamma_i=\alpha_j$ and we have $dom(\gamma_i)=dom(\alpha)$ and $\gamma_i\notin Id_{\overline{n}}$. \\
	\indent Next, we show that there is $\beta_i\in G\backslash Id_{\overline{n}}$ with $im(\beta_i)=J_i$.
	It is easy to verify that there is $\alpha \in IOF_n^{par}\backslash Id_{\overline{n}}$, with $im(\alpha)=J_i$.  Then there are $\alpha_1,...,\alpha_m\in G$ such that $\alpha=\alpha_1\cdot\cdot\cdot\alpha_m$. We have that $im(\alpha)\subseteq im(\alpha_m)$. If $im(\alpha)\subset im(\alpha_m) $ then $\alpha_m\in Id_{\overline{n}}$  or $\alpha_m\in\{\overline{u}_{n-2},\overline{x}_{n-2}\}$ by Lemma \ref{...}. Since $im(\alpha)\subseteq im(\alpha_m) $,  we can conclude that $\alpha_m\notin\{\overline{u}_{n-2},\overline{x}_{n-2}\}$. Consequently, either $im(\alpha)= im(\alpha_m) $ and $\alpha_m\notin Id_{\overline{n}}$ or $\alpha_m\in Id_{\overline{n}}$.	Suppose $\alpha_m\in Id_{\overline{n}}$. Then  $im(\alpha)\subseteq im(\alpha_{m-1})$. By the same argument as for $\alpha_m$, we obtain that either $im(\alpha) = im(\alpha_{m-1})$ and $\alpha_{m-1}\notin Id_{\overline{n}}$ or $\alpha_{m-1}\in Id_{\overline{n}}$. Continuing that procedure, we obtain that either $\alpha_1,...,\alpha_m\in Id_{\overline{n}}$ or there is $j\in\{1,...,m\}$ such that $im(\alpha_j)=im(\alpha)$ and  $\alpha_j\notin Id_{\overline{n}}$. The case $\alpha_1,...,\alpha_m\in Id_{\overline{n}}$ is not possible since $\alpha_1\cdot\cdot\cdot\alpha_m=\alpha\notin Id_{\overline{n}}$. We put $\beta_i=\alpha_j$ and we have $im(\beta_i)=im(\alpha)$ and $\beta_i\notin Id_{\overline{n}}$. \\	
	\indent Let now $i,j \in\{1,...,n-4\}$. Assume that $\gamma_i=\beta_j$. Then $J_i=dom(\gamma_i)=dom(\beta_j)$ and $im(\gamma_i)=im(\beta_j)=J_j$. It is easy to see by Proposition \ref{4 choice} that $k\gamma_i=k$ for all $k\in\{1,...,i,i+4,...,n\}$. Hence, we have $\gamma_i\in Id_{\overline{n}}$, a contradiction.
\end{proof}
Lemma \ref{partial id} provides $n$ transformations of rank $n-1$ which have to belong to any generating set of $IOF_n^{par}$. Lemma \ref{n-2} provides two transformations of rank $n-2$. Finally, Lemma \ref{n-4} points out that any generating set of $IOF_n^{par}$ has to contain $2(n-4)$ transformations of rank $n-3$. This shows that any generating set of $IOF_n^{par}$ contains at least $n+2+2(n-4) = 3n-6$ transformations, which proves:
\begin{prop} \label{big} rank$(IOF_n^{par})\geq 3n-6$.
\end{prop}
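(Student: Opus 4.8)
The plan is to collect the three ``forced membership'' results already established --- Lemmas \ref{partial id}, \ref{n-2}, and \ref{n-4} --- and simply count the transformations that every generating set must contain. Let $G$ be an arbitrary generating set of $IOF_n^{par}$. By Lemma \ref{partial id}, the $n$ partial identities $\overline{v}_1,\dots,\overline{v}_n$, each of rank $n-1$, lie in $G$. By Lemma \ref{n-2}, the two transformations $\overline{u}_{n-2}$ and $\overline{x}_{n-2}$, each of rank $n-2$, lie in $G$; and they are distinct, since $1\overline{u}_{n-2}=3$ while $3\overline{x}_{n-2}=1$. By Lemma \ref{n-4}, there are pairwise different $\beta_1,\dots,\beta_{n-4},\gamma_1,\dots,\gamma_{n-4}$ in $G$, and since $|J_i|=n-3$ for every $i\in\{1,\dots,n-4\}$, these $2(n-4)$ transformations all have rank $n-3$.

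What remains is only to check that these $n+2+2(n-4)$ transformations are pairwise distinct, so that together they contribute exactly that many elements to $G$. The three families have three distinct ranks, namely $n-1$, $n-2$, and $n-3$, so no member of one family can equal a member of another. Inside the first family the maps $\overline{v}_i$ are visibly distinct; inside the second we have just observed $\overline{u}_{n-2}\neq\overline{x}_{n-2}$; and inside the third, distinctness is part of the conclusion of Lemma \ref{n-4}. Consequently $|G|\geq n+2+2(n-4)=3n-6$, and since $G$ was an arbitrary generating set, rank$(IOF_n^{par})\geq 3n-6$.

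There is essentially no obstacle left at this stage: all the substance has been absorbed into the preceding lemmas, which pin down concrete transformations that cannot be written as products of other elements of $IOF_n^{par}$. The only point that needs a moment's care is avoiding double-counting, i.e.\ confirming the cross-family disjointness used above; this is immediate because the rank is constant on each of the three families and the three values $n-1$, $n-2$, $n-3$ are distinct (for $n=4$ the third family is empty and the bound reads $4+2=6=3\cdot4-6$, still consistent).
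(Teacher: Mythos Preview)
Your proof is correct and follows essentially the same approach as the paper: invoke Lemmas \ref{partial id}, \ref{n-2}, and \ref{n-4} to produce $n$, $2$, and $2(n-4)$ mandatory generators of ranks $n-1$, $n-2$, and $n-3$ respectively, then count. Your explicit verification of pairwise distinctness via the differing ranks is a bit more detailed than the paper's version, but the argument is the same.
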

By Proposition \ref{small} and \ref{big}, we can state the main result:
\begin{thm} 
	rank$(IOF_n^{par})=3n-6$.
\end{thm}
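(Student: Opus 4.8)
The plan is to obtain the equality by sandwiching: Proposition \ref{small} gives $\mathrm{rank}(IOF_n^{par}) \le 3n-6$ and Proposition \ref{big} gives $\mathrm{rank}(IOF_n^{par}) \ge 3n-6$, so the two bounds force equality. Thus the theorem is immediate once those two propositions are in hand; below I sketch how each of them is established, since that is where all the work lies.

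For the upper bound I would exhibit the concrete set $A_n=\{\overline{v}_1,\dots,\overline{v}_n,\overline{u}_1,\dots,\overline{u}_{n-4},\overline{u}_{n-2},\overline{x}_1,\dots,\overline{x}_{n-4},\overline{x}_{n-2}\}$ and check two things: that $|A_n| = n + 2(n-3) = 3n-6$ (a count), and that $A_n$ generates $IOF_n^{par}$. The latter is Corollary \ref{An}, whose heart is Theorem \ref{alpha}: for an \emph{arbitrary} $\alpha \in IOF_n^{par}\backslash(Id_{\overline{n}}\cup\{\varepsilon\})$ one builds an explicit word $w_\alpha = v_A w_\alpha^*$ over the alphabet of generators and proves $\overline{w}_\alpha = \alpha$. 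The idea is that the partial identities $\overline{v}_i$ account for the points removed from the domain and image, while the maps $\overline{u}_i$ and $\overline{x}_i$ (each a shift by $2$ on an initial segment) produce exactly the gaps of even length different from those dictated by $\alpha$, in a way compatible with the parity- and fence-preserving constraints of Proposition \ref{4 choice}. Verifying $\overline{w}_\alpha = \alpha$ amounts to checking $\rho\overline{w}_\alpha = \rho\alpha$ for $\rho \in dom(\alpha)$ and $\rho\notin dom(\overline{w}_\alpha)$ otherwise; this is the bulk of Section 2, where Lemmas \ref{lem zu=dz}--\ref{tool} do the bookkeeping of how the composed shift maps move a point, and Propositions \ref{9}--\ref{<n} dispatch the various intervals of $\overline{n}$. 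Finally one notes $\overline{u}_{n-3} = \overline{v}_{n-2}\overline{u}_{n-2}$ and $\overline{x}_{n-3} = \overline{v}_n\overline{x}_{n-2}$, so the two generators not appearing in $A_n$ are recovered inside $\langle A_n\rangle$; this yields Proposition \ref{small}.

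For the lower bound I would stratify a hypothetical generating set $G$ by rank. Rank $n-1$: by Lemma \ref{preserving} any $\beta\in IOF_n^{par}$ with $dom(\beta) = dom(\overline{v}_i)$ must equal $\overline{v}_i$, and reading off the first factor in a product equal to $\overline{v}_i$ shows $\overline{v}_1,\dots,\overline{v}_n \in G$ (Lemma \ref{partial id}) --- that is $n$ forced elements. Rank $n-2$: Lemma \ref{...} classifies the rank-$(n-2)$ members of $IOF_n^{par}$ as partial identities, $\overline{u}_{n-2}$, or $\overline{x}_{n-2}$, and a factorisation argument (using that $Id_{\overline{n}}$ is a submonoid, so $\overline{u}_{n-2}$ cannot be a product of partial identities) forces $\overline{u}_{n-2},\overline{x}_{n-2}\in G$ (Lemma \ref{n-2}) --- two more. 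Rank $n-3$: Lemma \ref{n-4} produces, for each $i\in\{1,\dots,n-4\}$, elements $\beta_i,\gamma_i\in G$ with $im(\beta_i) = J_i = dom(\gamma_i)$, and these $2(n-4)$ elements are pairwise distinct (if $\gamma_i = \beta_j$ then Proposition \ref{4 choice} would force $\gamma_i\in Id_{\overline{n}}$, a contradiction). Summing the three strata gives $|G| \ge n + 2 + 2(n-4) = 3n-6$, which is Proposition \ref{big}.

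The genuinely hard part is the upper bound, specifically the verification that the combinatorially defined word $w_\alpha$ really represents $\alpha$: one must track, across a composition of several shift-by-$2$ maps interleaved with their inverses, both where each point of $dom(\alpha)$ goes and that no new point enters the domain, and the auxiliary indices $k_u,k_x$ together with the lengths of subwords must match the data $d_{r_k},m_{r_k}$ of $\alpha$ exactly (this is the role of Lemma \ref{lem zu=dz} and the inequalities of Lemma \ref{tool}). On the lower-bound side the only subtlety is the factorisation lemmas --- ruling out that a generator we need could arise as a nontrivial product --- but this is comparatively routine given Proposition \ref{4 choice} and the classification of small-rank elements.
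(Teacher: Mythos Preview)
Your proposal is correct and follows essentially the same approach as the paper: the theorem is obtained by combining Proposition \ref{small} and Proposition \ref{big}, and your sketches of how each of these is established (the explicit generating set $A_n$ together with Theorem \ref{alpha}/Corollary \ref{An} for the upper bound, and the rank-stratified forcing arguments of Lemmas \ref{partial id}, \ref{n-2}, \ref{n-4} for the lower bound) match the paper's development exactly.
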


\section*{Statements and Declarations}
There are no financial or non-financial interests directly or indirectly related
to the work submitted for publication.






\bibliography{sn-bibliography}

\end{document}